\DeclareMathAlphabet{\mathpzc}{OT1}{pzc}{m}{it}
\newtheorem{theorem}{Theorem}[section]
\newtheorem{lemma}[theorem]{Lemma}
\newtheorem{proposition}[theorem]{Proposition}
\newtheorem{corollary}[theorem]{Corollary}
\newtheorem{fact}[theorem]{Fact}
\newtheorem{claim}[theorem]{Claim}
\theoremstyle{definition}
\newtheorem{definition}[theorem]{Definition}
\theoremstyle{remark}
\newtheorem{remark}{Remark}
\newtheorem{question}{Question}
\def\hook{\upharpoonright}
\def\forces{\Vdash}
\newfont{\ssi}{cmssi12 at 12pt}
\newenvironment{ea*}{\begin{eqnarray*}}{\end{eqnarray*}}
\newcommand{\calA}{\mathcal{A}}
\renewcommand{\phi}{\varphi}
\newcommand{\ZFC}{\ensuremath{\mathsf{ZFC}}\xspace}
\def\<#1>{\langle#1\rangle}
\renewcommand{\P}{{\mathord{\mathbb P}}}
\newcommand{\Q}{{\mathord{\mathbb Q}}}
\newcommand{\MA}{\ensuremath{\mathsf{MA}}}
\newcommand{\MP}{\ensuremath{\mathsf{MP}}}
\newcommand{\ColNothing}{\mathrm{Col}}
\newcommand{\Col}[1]{\ColNothing(#1)}
\newcommand{\MPColNothing}[1]{\MP_{\Col{\dot{\kappa}}}}
\newcommand{\CH}{\ensuremath{\mathsf{CH}}\xspace}
\newcommand{\calP}{\mathcal{P}}
\def\hook{\upharpoonright}
\def\forces{\Vdash}
\def\Me{\mathcal M}
\def\Null{\mathcal N}
\def\ZFC{\mathsf{ZFC}}
\def\MA{\mathsf{MA}}
\def\baire{\omega^\omega}
\def\cantor{2^\omega}
\def\mfb{\mathfrak b}
\def \mfd{\mathfrak{d}}
\def\mfa{\mathfrak{a}}
\def\CH{\mathsf{CH}}
\def\calP{\mathcal P}
\def\calE{\mathcal E}
\def\calA{\mathcal A}
\def\calI{\mathcal I}
\title{Tight Eventually Different Families}
\author[Fischer]{Vera Fischer}
\address[V. ~Fischer]{Institut f\"{u}r Mathematik, Kurt G\"odel Research Center, Universit\"{a}t Wien, Kolingasse 14-16, 1090 Wien, AUSTRIA}
\email{vera.fischer@univie.ac.at}
\author[Switzer]{Corey Bacal Switzer}
\address[C.~B.~Switzer]{Institut f\"{u}r Mathematik, Kurt G\"odel Research Center, Universit\"{a}t Wien, Kolingasse 14-16, 1090 Wien, AUSTRIA}
\email{corey.bacal.switzer@univie.ac.at}
\thanks{\emph{Acknowledgements:} The authors would like to thank the
Austrian Science Fund (FWF) for the generous support through grant number Y1012-N35.}
\subjclass[2000]{03E17, 03E35, 03E50}
\date{}
\keywords{Cardinal characteristics; MAD Families; Eventually Different Families}
\begin{document}
\maketitle

\begin{abstract}
Generalizing the notion of a tight almost disjoint family, we introduce the notions of a {\em tight eventually different} family of functions in Baire space and a {\em tight eventually different set of permutations} of $\omega$. Such sets strengthen maximality, exist under $\MA (\sigma {\rm -centered})$ and come with a properness preservation theorem. The notion of tightness also generalizes earlier work on the forcing indestructibility of maximality of families of functions. As a result we compute the cardinals $\mfa_e$ and $\mfa_p$ in many known models  by giving explicit witnesses and therefore obtain the consistency of several constellations of cardinal characteristics of the continuum including $\mfa_e = \mfa_p = \mfd < \mfa_T$, $\mfa_e = \mfa_p < \mfd = \mfa_T$, $\mfa_e = \mfa_p =\mathfrak{i} < \mathfrak{u}$ and $\mfa_e=\mfa_p = \mfa < non(\Null) = cof(\Null)$. We also show that there are $\Pi^1_1$ tight eventually different families and tight eventually different sets of permutations in $L$ thus obtaining the above inequalities alongside $\Pi^1_1$ witnesses for $\mfa_e = \mfa_p = \aleph_1$.

Moreover, we prove that tight eventually different families are Cohen indestructible and are never analytic.
\end{abstract}

\section{Introduction}

Two infinite subsets of $\omega$, $A, B \in [\omega]^\omega$ are {\em almost disjoint} if $A \cap B$ is finite. A family $\calA \subseteq [\omega]^{\omega}$ is {\em almost disjoint} if its elements are pairwise almost disjoint. Such a family is {\em maximal} or MAD if it is almost disjoint but not properly included in any other almost disjoint family. The least size of an infinite MAD family is denoted by $\mfa$ and is one of the most studied cardinal invariants of the continuum. See \cite[Section 8]{BlassHB} for more on $\mfa$. 

The cardinal $\mfa$ has many relatives that have also been studied in the literature. These essentially come in two forms: either we can replace ``finite" in the definition of almost disjoint with belonging to another ideal or we can insist that the MAD family come equipped with additional structure. Some examples of this are as follows:

\begin{enumerate}
\item
$\mfa_e$, the least size of a maximal eventually different family of functions in $\baire$.
\item
$\mfa_p$, the least size of a maximal eventually different set of permutations of $\omega$.
\item
$\mfa_T$, the least size of a maximal almost disjoint family of finitely branching trees $T \subseteq \omega^{<\omega}$. This cardinal is equivalent to the least size of an uncountable partition of any perfect Polish space into compact sets.
\item
$\mfa_g$, the least size of a maximal cofinitary group\footnote{Recall that a subgroup $G \leq S(\omega)$ is {\em cofinitary} if every non-identity element has only finitely many fixed points. It's not hard to see that any cofinitary group consists of eventually different permutations.}.
\end{enumerate}

Here, recall that a set $\mathcal E \subseteq \baire$ is {\em eventually different} if for all distinct $f, g\in \calE$ there is a $k < \omega$ so that for all $n > k$ $f(n) \neq g(n)$. Denote this situation by $f \neq^* g$. Dually if $f \neq^* g$ fails then we say that $f$ and $g$ are {\em infinitely often equal}, denoted $f =^\infty g$. An eventually different family is {\em maximal} if it is not contained in any strictly larger such family. The same can be said in the case of $\mfa_p$ with the additional stipulation that every element is a bijection of $\omega$ with itself. For more examples of relatives of $\mfa$ see \cite{restrictedMADfamilies, ADTop}.

In general the relationship between these cardinals remains murky. It's known that $\mfa$ can be consistently less than all of them, but it's not known if the reverse inequality is consistent for any of them. It remains open if $\mfa_e = \mfa_p$ is provable in $\ZFC$. In this paper we contribute to the project of separating and understanding these invariants. We provide several models of $\mfa_e = \mfa_p = \aleph_1$ while allowing other related cardinals to be $\aleph_2$ in a controlled way. These results are obtained using new strengthenings of maximality for eventually different families of functions and permutations respectively.

A fruitful strengthening of maximality for almost disjoint families in $[\omega]^\omega$ if given by the notion of a {\em tight} MAD family. An almost disjoint family $\calA$ is {\em tight} if given any countable set $\{B_n \; | \; n<\omega\}$ of $\mathcal I(\mathcal A)^+$ sets there is a single $C \in \mathcal I(\mathcal A)$ so that $B_n \cap C$ is infinite for all $n < \omega$. Tight MAD families exist under $\mfb = 2^{\aleph_0}$ (see \cite{orderingMAD}), are Cohen indestructible (\cite[Corollary 3.2]{orderingMAD}) and, under certain conditions are preserved by countable support iterations of proper forcing notions (see~\cite{restrictedMADfamilies}).  

On the other hand, the preservation of the maximality of maximal eventually different families under forcing iterations are not that well understood. 
Using the parametrized $\diamondsuit$ principles of \cite{DiamondPrinc}, Kastermans and Zhang show that $\mfa_g = \mfa_e$ in the Miller model (see~\cite{KastermansZhang06}). Their construction can be easily modified to establish the analogous result for maximal families of eventually different functions. A more recent study in the area is the work of the first author with D. Schrittesser, see~\cite{SacksMedf}, where they establish the existence (in the Constructible Universe $L$) of a co-analytic Sacks-indestructible maximal eventually different family. Note that the techniques of ~\cite{KastermansZhang06} and~\cite{SacksMedf} are completely different, and moreover completely different than the techniques developed in the current article. 

The results of the current paper improve and generalize the above, as we produce a uniform framework for the preservation of maximal families of eventually different functions, a framework, which applies to a long list of partial orders, including Sacks forcing, Miller rational perfect tree forcing, partition forcing, infinitely often equal forcing, Shelah's poset 
for destroying the maximality of a given maximal ideal, as well as their countable support iterations.

For an eventually different family $\calE$, we denote by $\calI_T(\calE)$ the family of all trees $T\subseteq{\omega^{<\omega}}$ so that there is $t\in T$ and a finite $X\in [\calE]^{<\omega}$ such that $\bigcup T_t\subseteq \bigcup X$, where $T_t=\{s\in T: s\subseteq t\hbox{ or }t\subseteq s\}$. We refer to this family, as the tree ideal generated by $\calE$ (see Definition~\ref{def.treeideal}). Then $\calI_T(\calE)^+$ is defined as the collection of all trees $T\subseteq\omega^{<\omega}$ with the property that for each $t\in T$, $\bigcup T_t$ is not almost covered by finitely many functions from $\calE$. Finally, we say that an eventually different family $\calE$ is tight, if for every $\{T_n\}_{n\in\omega}\subseteq \calI_T^+(\calE)$ there is a single $g\in\calE$ with the property that $\forall n\in\omega\forall t\in T_n$ there is a branch $g_{n,t}$ of $T_n$ such that $t\subseteq g_{n,t}$ and for infinitely many $m$, $g_{n,t}(m)=g(m)$ (see Definition~\ref{def.tight}).

Note that the notion of tightness for an eventually different family is a natural strengthening of maximality for such families (see Proposition~\ref{if.tight.then.max}) as well as a strengthening of $\omega$-maximality (see ~\cite{SFLZ}).  As shown in Proposition~\ref{MA.tight}, tight eventually different families exist under $\hbox{\textsf{MA}}(\sigma\hbox{-centered})$ and so in particular under \textsf{CH}. In Section 4, we introduce strong preservation of a tight mad families and show that the countable support iteration of proper forcing notions, each iterand of which strongly preserves a tight mad family, also preserve the tightness of an eventually different family. Equipped with the above techniques, we establish:

\begin{theorem}\label{MainTheorem}
The following inequalities are all consistent and in each case $\mfa_e = \mfa_p = \aleph_1$ is witnessed by a tight eventually different family and a tight eventually different set of permutations respectively.
\begin{enumerate}
\item
$\mfa = \mfa_e = \mfa_p < \mfd = \mathfrak{u} =  \mfa_T = 2^{\aleph_0}$
\item
$\mfa= \mfa_e = \mfa_p = \mfd < \mfa_T = 2^{\aleph_0}$
\item
$\mfa = \mfa_e = \mfa_p = \mfd = \mathfrak{u} < non(\Null) = cof(\Null) = 2^{\aleph_0}$. 
\item
$\mfa = \mfa_e = \mfa_p = \mathfrak{i} = cof(\Null) < \mathfrak{u}$.
\end{enumerate}
Moreover, if we work over the constructible universe, we can provide co-analytic witnesses of cardinality $\aleph_1$ to each of $\mathfrak{a},\mathfrak{a}_e,\mathfrak{a}_p,\mathfrak{i},\mathfrak{u}$ in the above inequalities. 
\label{mainthm}
\end{theorem}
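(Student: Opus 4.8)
The plan is to realize each of the four constellations by a separate countable support iteration of proper forcings of length $\omega_2$, in each of which a fixed triple of ``tight'' objects survives — pinning the left-hand cardinals at $\aleph_1$ — while the iterands are chosen to push the right-hand cardinals to $2^{\aleph_0}=\aleph_2$. So I would begin over a model of \CH\ (over $L$ for the ``moreover'') by fixing a tight MAD family $\calA$, a tight eventually different family $\calE$, and a tight eventually different set of permutations $\calP$, each of size $\aleph_1$: $\calE$ and $\calP$ exist by Proposition~\ref{MA.tight}, and a tight MAD family exists since $\mfb=2^{\aleph_0}$. Then I would force with $\langle\P_\alpha,\dot\Q_\alpha\mid\alpha<\omega_2\rangle$ where each $\dot\Q_\alpha$ is (a name for) one of the posets the framework applies to. Since each iterand strongly preserves the relevant tight family, the preservation theorem of Section~4 yields that $\calE$ and $\calP$ stay tight (and $\calA$ stays a tight MAD family), hence all three remain maximal by Proposition~\ref{if.tight.then.max}. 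As maximal almost disjoint families, maximal eventually different families, and maximal eventually different sets of permutations are all uncountable, this gives $\mfa=\mfa_e=\mfa_p=\aleph_1$ in the final model, witnessed by $\calA$, $\calE$, $\calP$ respectively; and $2^{\aleph_0}=\aleph_2$ is the standard computation for such an iteration over a model of \CH.

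It then remains, item by item, to choose the iterands so that the remaining cardinals land where required. For (1) I would interleave Miller (rational perfect tree) forcing — whose generic real is unbounded over every intermediate model, forcing $\mfd=\aleph_2$ — with partition forcing, whose generics, under a suitable book-keeping, destroy every $\aleph_1$-sized almost disjoint family of finitely branching trees, forcing $\mfa_T=\aleph_2$. For (2) I would iterate partition forcing alone: it is $\omega^\omega$-bounding, so $\mfd=\aleph_1$, while the book-keeping still gives $\mfa_T=\aleph_2$. For (3) I would iterate the infinitely often equal forcing, forcing $non(\Null)=cof(\Null)=\aleph_2$ while remaining $\omega^\omega$-bounding (so $\mfd=\aleph_1$) and preserving a fixed $\aleph_1$-generated P-point (so $\mfu=\aleph_1$). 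For (4) I would iterate Shelah's poset for destroying the maximality of a maximal ideal, book-keeping through all $\aleph_1$-generated ultrafilters so that $\mfu=\aleph_2$; since this poset has the Sacks property it keeps $cof(\Null)=\aleph_1$ (so in particular $\mfd=\aleph_1$), and arranging that it additionally preserves a fixed maximal independent family keeps $\mfi=\aleph_1$. In each case the displayed equalities are then read off from $2^{\aleph_0}=\aleph_2$ together with the standard \ZFC\ inequalities among the cardinal invariants involved.

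For the ``moreover'' I would rerun the ground-model construction inside $L$ using A.~Miller's coding technique: an $\omega_1$-recursion following the canonical $\Sigma^1_2$-good wellorder of the reals of $L$, meeting the tightness and maximality requirements by the usual book-keeping while coding each initial segment of the construction into the reals enumerated afterwards, so that membership in each of $\calA$, $\calE$, $\calP$ — and in the P-point base of (3) and the maximal independent family of (4) — becomes a $\Pi^1_1$ predicate. One must then check that these $\Pi^1_1$ definitions still define the same sets after the iteration: this uses that the iterands are Borel (so the iteration is sufficiently $\Sigma^1_2$-absolute over $L$), that properness preserves $\omega_1$, and, decisively, the preservation of tightness — hence of maximality — already established, which is exactly what prevents a $\Pi^1_1$ family from gaining new members.

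I expect the main obstacle to be the per-iterand combinatorics feeding the Section~4 preservation theorem: proving, separately for Miller forcing, partition forcing, the infinitely often equal forcing, Shelah's ideal-destroying poset (and, for (4), Sacks forcing), that it \emph{strongly} preserves a tight MAD family in the precise technical sense demanded, and likewise in the eventually different and permutation settings. These are the genuinely new lemmas, and the definition of tightness has to be calibrated so that exactly this goes through. A secondary difficulty is making the definability book-keeping of the $L$-construction mesh with these preservation arguments, so that $\Pi^1_1$-ness genuinely survives the countable support iteration rather than merely the individual steps.
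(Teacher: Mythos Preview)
Your proposal is essentially the paper's approach: fix tight families under \CH, iterate with countable support a poset that strongly preserves tightness, and read off the constellation. The per-iterand strong preservation lemmas you flag as the main obstacle are exactly what Sections~6--9 prove, and the Miller-style coding in $L$ is what Section~10 does.

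Two small corrections. For item~(1) the interleaving with partition forcing is superfluous: since $\mfd\le\mfa_T$ in \ZFC, iterating Miller forcing alone already gives $\mfa_T=2^{\aleph_0}$ once $\mfd=2^{\aleph_0}$; this is what the paper does (Theorem~\ref{miller2}). There is also no role for Sacks forcing in item~(4); $\Q_\calI$ alone suffices (it has the Sacks \emph{property}, which is presumably what you meant). Finally, your account of why the $\Pi^1_1$ definitions persist is slightly off: preservation of tightness is what keeps the ground-model family \emph{maximal} in the extension, but it is not what keeps the $\Pi^1_1$ formula from acquiring new members. The latter is automatic from the form of the coding --- any real satisfying the formula codes a wellfounded $(\omega,E)\cong L_\alpha$ with $L_{\alpha+\omega}\models X\in\calE$, which forces $X\in L$ outright --- so no appeal to $\Sigma^1_2$-absoluteness of the iteration is needed there.
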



The rest of this paper is organized as follows. In the remaining part of the introduction we record some preliminaries for later use. In Section 2 we introduce tight eventually different families, which is of central focus for the entire paper. In Section 3 we prove that tight eventually different families are Cohen indestructible and never analytic. Section 4 contain our preservation results for such families. In Section 5, we observe that the discussion of tight eventually different families applies {\em mutatis mutandis} to its generalization for eventually different sets of permutations. Next we turn to applications. In Sections 6 through 9 we prove that several well known forcing notions strongly preserve the tightness of eventually different families of functions and permutations, and conclude the consistency of the inequalities from the above Theorem. In Section 10 we study the definability properties of tight families of functions and provide co-analytic witnesses for the results described in~\ref{MainTheorem}. We conclude the paper with open questions for future research.

\subsection{Preliminaries}
Our first preliminary involves recalling the {\em ideal associated to an} almost disjoint family. Recall that given an almost disjoint family $\calA$ the {\em ideal associated to }$\calA$ is the set $\mathcal I(\calA)$ consisting of all $B \in [\omega]^\omega$ for which there is a finite set $\{A_0, ..., A_{n-1}\}\subseteq \calA$ so that $B \subseteq^* \bigcup_{k < n} A_k$. This ideal is important when considering how MAD families persist (or not) to forcing extensions. Often this takes the following form. If $\P$ is some forcing notion and $\dot{X}$ is a $\P$-name for an element of $\calI(\calA)^+$ then its {\em outer hull}, the set of all $m$ for which some condition forces $\check{m} \in \dot{X}$ is an element of $\calI (\calA)^+$ in $V$. Many arguments involving MAD families in forcing extensions consider this set at key points. One of the main technical observations of this paper involves the introduction of a similar ideal-like family associated to an eventually different family. However, the naive generalization, thinking of an eventually different family as collection of subsets of $P(\omega^2)$ does not work and instead we consider an ideal-like family (which is not actually an ideal) generated by the eventually different family consisting of trees $T \subseteq \omega^{<\omega}$. This is described in detail in Section 2, see also Remark 1.

We conclude this introduction by recording some of what was known already concerning $\mfa$, $\mfa_e$, $\mfa_p$, $\mfa_T$, $\mfb$ and $\mfd$. To start we recall the known provable inequalities that will be used throughout the paper. 
\begin{fact}
The following inequalities are provable in $\ZFC$.
\begin{enumerate}
\item
$non(\Me) \leq \mfa_e$
\item (\cite[Theorem 2.2]{nonmandag}
$non(\Me) \leq \mfa_p$
\item (\cite[Proposition 8.4]{BlassHB})
$\mfb \leq \mfa$
\item (\cite[Theorem 2.5]{partitionnumbers})
$\mfd \leq \mfa_T$
\end{enumerate}
\end{fact}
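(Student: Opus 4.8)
Items (2)--(4) are quoted directly from the cited literature, so for those the plan is merely to record the references: (2) is \cite[Theorem~2.2]{nonmandag}, (3) is \cite[Proposition~8.4]{BlassHB}, and (4) is \cite[Theorem~2.5]{partitionnumbers}. The only item calling for an argument is (1), namely $non(\Me)\leq\mfa_e$, and I would derive it from Bartoszy\'nski's combinatorial characterization of the uniformity of the meager ideal together with a one-step extension argument.

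First I would recall that characterization. For a fixed $f\in\baire$ the set $\{g\in\baire : \exists^\infty n\,(g(n)=f(n))\}$ equals $\bigcap_{k<\omega}\{g : \exists n\geq k\,(g(n)=f(n))\}$, an intersection of open dense sets, hence comeager; equivalently, the ``bad'' set $\{g : f\neq^* g\}$ is meager (indeed $F_\sigma$). Thus every non-meager $X\subseteq\baire$ meets each set $\{g : g=^\infty f\}$, i.e.\ $X$ is an \emph{infinitely-often-equal covering family}: for every $f\in\baire$ there is $g\in X$ with $g=^\infty f$. Bartoszy\'nski's theorem (see e.g.\ \cite{BlassHB}) says, further, that $non(\Me)$ is \emph{exactly} the least cardinality of such a family; I would quote this, the non-trivial direction being that an infinitely-often-equal covering family can never have cardinality below $non(\Me)$.

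Granting this, the proof of (1) is short. Let $\calE$ be a maximal eventually different family; I claim $|\calE|\geq non(\Me)$. If not, then $\calE$ fails to be an infinitely-often-equal covering family, so there is $g\in\baire$ such that no member of $\calE$ is infinitely often equal to $g$; that is, $f\neq^* g$ for every $f\in\calE$. In particular $g\notin\calE$ (else $g\neq^* g$, which is absurd), so $\calE\cup\{g\}$ is an eventually different family properly extending $\calE$, contradicting maximality. Since no finite eventually different family $\{f_0,\dots,f_{n-1}\}$ is maximal --- choosing $g(k)\notin\{f_0(k),\dots,f_{n-1}(k)\}$ for each $k$ yields an eventually different $g$ extending it --- every maximal eventually different family is infinite, and therefore $\mfa_e\geq non(\Me)$, as required.

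The only genuine obstacle is that the argument leans on Bartoszy\'nski's characterization as a black box; beyond that there is no combinatorial difficulty. A self-contained alternative would instead prove directly that a maximal eventually different family is non-meager --- so that $|\calE|\geq non(\Me)$ by the definition of the uniformity --- but this repackages essentially the same combinatorics as a fusion/diagonalization argument, so quoting the characterization is cleaner. I would also note that (2) is the true permutation analogue of (1): the extension step above does not transfer verbatim, since the witness $g$ it produces need not be a bijection of $\omega$, which is precisely why \cite{nonmandag} establishes (2) through a separate and more delicate construction rather than through the argument for (1).
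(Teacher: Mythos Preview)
Your proposal is correct and follows essentially the same route as the paper: both quote Bartoszy\'nski's characterization of $non(\Me)$ as the least size of an infinitely-often-equal covering family (the paper cites \cite[Theorem~2.4.7]{BarJu95} rather than \cite{BlassHB}) and then observe that any eventually different family of smaller size can be properly extended, hence is not maximal. Your additional remarks---the easy direction of the characterization, the observation that maximal eventually different families are infinite, and the comment on why (2) needs a separate argument---are correct embellishments but not present in the paper's terser treatment.
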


\begin{proof}
All of these have been proved in the literature (see the citations) with the exception of $non(\Me) \leq \mfa_e$. This inequality however follows from the well known fact that $non(\Me)$ is equal to the bounding number for $\neq^*$, i.e. $non (\Me)$ is the least size of a set of reals $A \subseteq \baire$ for which there is no $f \in A$ which is eventually different from all $g \in A$, see \cite[Theorem 2.4.7]{BarJu95}. Indeed, given this, suppose $A \subseteq \baire$ is eventually different and $|A| < non(\Me)$. Then, there must be a real $f \in \baire$ eventually different from every element of $A$, so $A$ is not maximal.
\end{proof}

Note that since $\mfb \leq non(\Me), \mfd$ the bounding number is actually a lower bound on all the relatives of $\mfa$ (see \cite{MADfamandNeighbors} for a discussion of this). It's also known that both $non(\Me)$ and $\mfd$ are independent of $\mfa$, and that $non(\Me)$ and $\mfa_T$ are independent (see below). Moreover $\mfd$ and $\mfa_e/\mfa_p$ are independent. In the case of $\mfa_p$ this was known, though perhaps never written down. Namely, $non(\Me) = 2^{\aleph_0}$ in the random model, and therefore $\mfa_e = \mfa_p = 2^{\aleph_0}$ while $\mfd = \aleph_1$. On the other hand Kastermans and Zhang proved in \cite{KastermansZhang06} that $\mfa_p = \aleph_1$ in the Miller model, where it's well known that $\mfd = \aleph_2$. Their proof uses the parametrized diamonds of \cite{DiamondPrinc}, and is completely different than the proof we give of this result in Section 5 below. Though it's not stated in their paper, Kastermans and Zhang's proof that $\mfa_p = \aleph_1$ in the Miller model can also be easily augmented to show that $\mfa_e = \aleph_1$ in the Miller model as well. In Section 5 we give a different proof that $\mfa_e = \aleph_1$ in the Miller model as well hence giving a new proof of the independence of $\mfd$ and $\mfa_e$. 

We finish this section by noting what is known about consistent inequalities between $\mfa_e$, $\mfa_p$, $\mfa_T$ and $\mfd$.

\begin{fact}
The inequality $\mfa_T = \mfd < \mfa_e = \mfa_p$ holds in the random model, in particular it is consistent.
\label{randommodel}
\end{fact}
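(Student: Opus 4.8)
The plan is to argue that after adding $\aleph_2$-many random reals (say by the measure algebra on $2^{\omega_2}$, or a finite-support product/iteration of $\Add(random)$) over a model of $\CH$, all four cardinals land where claimed. First I would recall that $\mfa_T = \mfd = \aleph_1$ in this model: the random real forcing is $\omega^\omega$-bounding, so $\mfd = \aleph_1$ is preserved, and then by Fact 1.7(4) we get $\mfd \le \mfa_T$; for the reverse inequality one uses that an $\omega^\omega$-bounding forcing preserves a (suitably chosen, e.g. tight or $+$-Ramsey) almost disjoint family of finitely branching trees built under $\CH$ in the ground model, so $\mfa_T = \aleph_1$ as well. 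Alternatively one cites that $\mfa_T \le \mfb^+$-type bounds or directly that the ground-model witness to $\mfa_T = \aleph_1$ survives; either way this half is essentially bookkeeping with known preservation facts about random forcing.

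The substantive half is $\mfa_e = \mfa_p = \aleph_2 = 2^{\aleph_0}$, and here I would simply invoke Fact 1.7(1) and (2): $non(\Me) \le \mfa_e$ and $non(\Me) \le \mfa_p$. So it suffices to show $non(\Me) = 2^{\aleph_0} = \aleph_2$ in the random model. This is the classical computation of Cichoń's diagram for the random model: random forcing makes the ground-model reals meager (indeed a single random real over $V$ makes $2^\omega \cap V$ meager), and an iteration/product of length $\omega_2$ with the measure algebra therefore forces that no set of reals of size $<\aleph_2$ is non-meager, i.e. $non(\Me) = \aleph_2$. Combined with the trivial bound $\mfa_e, \mfa_p \le 2^{\aleph_0} = \aleph_2$ this pins both down to $\aleph_2$.

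Putting the two halves together yields $\mfa_T = \mfd = \aleph_1 < \aleph_2 = \mfa_e = \mfa_p$ in the random model, which gives the consistency. The one point requiring a little care — and the place I would expect to spend the most effort — is the preservation of a ground-model witness for $\mfa_T = \aleph_1$ through the long random extension; this needs the right ground-model object (a $\CH$-constructed almost disjoint family of compact sets / finitely branching trees with enough indestructibility against $\omega^\omega$-bounding forcing) together with a standard reflection argument showing that any potential new tree covering the family already appears at some intermediate stage, where it can be handled. Everything else is a direct appeal to Fact 1.7 and to the textbook values of the cardinal characteristics in the random model.
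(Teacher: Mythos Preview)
Your treatment of $\mfa_e = \mfa_p = \aleph_2$ is exactly the paper's: both invoke $non(\Me) \le \mfa_e, \mfa_p$ (Fact~1.7(1),(2)) together with $non(\Me) = 2^{\aleph_0}$ in the random model. Likewise $\mfd = \aleph_1$ via $\baire$-boundedness is standard and matches the paper.

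Where you diverge is on $\mfa_T = \aleph_1$. The paper does not build or preserve any special ground-model object; it simply cites the classical result of Kunen and (independently) Stern, recorded as Theorem~5 in Miller's paper \cite{MillerCOV}, that in the random model \emph{every} ground-model partition of $2^\omega$ into $\aleph_1$ closed sets remains a cover. That argument is measure-theoretic and specific to random reals: a random real avoids ground-model null sets, and the complement of any candidate cover by countably many of the closed pieces is shown to be null.

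Your proposed route---construct under $\CH$ a ``tight'' or ``$+$-Ramsey'' almost disjoint family of finitely branching trees and argue it survives any $\baire$-bounding extension---is not a known general theorem, and $\baire$-boundedness alone is not enough to guarantee preservation of an $\mfa_T$-witness (there is no off-the-shelf notion of tightness for such families playing this role). The alternative ``$\mfa_T \le \mfb^+$-type bounds'' you mention does not exist in $\ZFC$: $\mfa_T$ can be strictly larger than $\mfb$ and $\mfd$, as the partition-forcing model in Section~7 shows. So your plan for $\mfa_T \le \aleph_1$ is the one genuine gap; the fix is simply to cite Kunen--Stern rather than attempt a general preservation argument.
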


\begin{proof}
That $\mfd =\aleph_1$ holds in the random model is well known, see, for example \cite[Model 7.6.8]{BarJu95}. Kunen and, independently, Stern showed that $\mfa_T = \aleph_1$ in the random model, see \cite[Theorem 5]{MillerCOV}. Meanwhile since $non(\mathcal M) \leq \mfa_e, \mfa_p$ and $non(\Me) = 2^{\aleph_0}$ in the random model, we have that $\mfa_e = \mfa_p = 2^{\aleph_0}$.
\end{proof}

It is also not hard to obtain the consistency of $\mfd < \mfa_T = \mfa_e = \mfa_p$.
\begin{proposition}
It is consistent that $\mfd = \aleph_1 < \mfa_T = \mfa_e = \mfa_p = \aleph_2$.
\label{randommodel2}
\end{proposition}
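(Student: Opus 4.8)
The plan is to realise the model by a countable support iteration $\langle \P_\alpha,\dot{\Q}_\alpha : \alpha<\omega_2\rangle$ over a model of \GCH, where each iterand is either random forcing $\B$ (the measure algebra of $\cantor$, computed in the current model) or the \emph{partition forcing} $\mathbb{PF}(\dot{\mathcal T})$ adjoining a finitely branching tree almost disjoint from every member of a prescribed maximal almost disjoint family $\dot{\mathcal T}$ of finitely branching trees (equivalently, a new compact set thinning a prescribed maximal partition of a perfect Polish space into compacta); see \cite{partitionnumbers}. Both $\B$ and $\mathbb{PF}(\mathcal T)$ are proper and $\omega^\omega$-bounding, so by Shelah's preservation theorem $\P_{\omega_2}$ is proper and $\omega^\omega$-bounding (\cite{BarJu95}); hence the reals of $V$ remain dominating in $V[\P_{\omega_2}]$ and $\mfd=\aleph_1$ there. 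By the usual arguments $\P_{\omega_2}$ is $\aleph_2$-cc and forces $2^{\aleph_0}=\aleph_2$, and since any almost disjoint family of finitely branching trees (resp. eventually different family of functions, resp. of permutations) extends to a maximal one by Zorn's lemma, we automatically get $\mfa_T,\mfa_e,\mfa_p\le 2^{\aleph_0}=\aleph_2$. (Some partition forcing really is needed: the pure random real model already yields $\mfd=\aleph_1<\aleph_2=\mfa_e=\mfa_p$, but there $\mfa_T=\aleph_1$ by Fact~\ref{randommodel}.) Finally fix a bookkeeping function handing out, cofinally often, both (i) the instruction $\dot{\Q}_\beta:=\B$ and (ii) every nice $\P_{\omega_2}$-name $\dot{\mathcal T}$ for an $\aleph_1$-sized almost disjoint family of finitely branching trees, where at such a stage $\beta$ we set $\dot{\Q}_\beta:=\mathbb{PF}(\dot{\mathcal T})$ if $\Vdash_{\P_\beta}$ ``$\dot{\mathcal T}$ is maximal'' and $\dot{\Q}_\beta:=\B$ otherwise; by $\aleph_2$-cc there are only $\aleph_2$ relevant names and every $\aleph_1$-sized family of trees in $V[\P_{\omega_2}]$ already lies in some $V[\P_\alpha]$, so this is arranged in the standard way.

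The point of the random coordinates is that $non(\Me)=\aleph_2$ in $V[\P_{\omega_2}]$: by the Cicho\'{n} diagram inequality $cov(\Null)\le non(\Me)$ (\cite{BarJu95}) it suffices to show $cov(\Null)=\aleph_2$. Let $\mathcal F$ be a set of $\aleph_1$ Borel null sets in $V[\P_{\omega_2}]$; by $\aleph_2$-cc their codes lie in $V[\P_\alpha]$ for some $\alpha<\omega_2$. Pick $\beta\ge\alpha$ with $\dot{\Q}_\beta=\B$ and let $r$ be the corresponding random real over $V[\P_\beta]$. Then $r$ avoids every Borel null set coded in $V[\P_\beta]\supseteq V[\P_\alpha]$, hence every member of $\mathcal F$, and ``$r\notin B_c$'' is an arithmetic statement about $r$ and the code $c$, so it stays true in $V[\P_{\omega_2}]$. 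Thus $\mathcal F$ does not cover $\baire\cap V[\P_{\omega_2}]$, so $cov(\Null)=\aleph_2$ and $non(\Me)=\aleph_2$; by the \ZFC{} inequalities $non(\Me)\le\mfa_e$ and $non(\Me)\le\mfa_p$ recorded above, $\mfa_e=\mfa_p=\aleph_2$.

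The partition coordinates give $\mfa_T=\aleph_2$. Since $\mfd\le\mfa_T$ (\cite{partitionnumbers}) and $\mfd=\aleph_1$, it is enough to show that no almost disjoint family $\mathcal T$ of finitely branching trees of size $\aleph_1$ is maximal in $V[\P_{\omega_2}]$. Given such $\mathcal T$, by $\aleph_2$-cc it lies in $V[\P_\alpha]$ for some $\alpha<\omega_2$, and it remains almost disjoint in all outer models. Let $\beta\ge\alpha$ be a stage at which the bookkeeping hands out a name for $\mathcal T$. If $\mathcal T$ is not maximal in $V[\P_\beta]$, then some finitely branching tree of $V[\P_\beta]\subseteq V[\P_{\omega_2}]$ already witnesses non-maximality. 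If $\mathcal T$ is maximal in $V[\P_\beta]$, then $\dot{\Q}_\beta=\mathbb{PF}(\mathcal T)$ adjoins a finitely branching tree almost disjoint from every member of $\mathcal T$, so $\mathcal T$ is not maximal in $V[\P_{\beta+1}]$. Either way $\mathcal T$ is not maximal in $V[\P_{\omega_2}]$, so $\mfa_T\ge\aleph_2$ and hence $\mfa_T=\aleph_2$.

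The main obstacle is not in this bookkeeping but in the (standard, yet non-trivial) input facts: that partition forcing is proper, $\omega^\omega$-bounding, and generically destroys the maximality of the prescribed partition; and that a countable support iteration of these Suslin proper $\omega^\omega$-bounding posets over a model of \CH is proper, $\omega^\omega$-bounding, $\aleph_2$-cc and forces $2^{\aleph_0}=\aleph_2$. The former is the content of the partition forcing analysis of \cite{partitionnumbers} (revisited later in this paper, where partition forcing is shown to strongly preserve tightness), and the latter follows from Shelah's iteration theorems, see \cite{BarJu95}.
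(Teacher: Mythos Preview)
Your proof is correct and follows essentially the same approach as the paper: a countable support iteration over a model of \CH alternating random forcing with partition forcing, using that both are proper and $\omega^\omega$-bounding to keep $\mfd=\aleph_1$, the random iterands to push $non(\Me)$ (hence $\mfa_e,\mfa_p$) to $\aleph_2$, and the partition iterands with bookkeeping to push $\mfa_T$ to $\aleph_2$. You supply more detail than the paper does---in particular the $cov(\Null)\le non(\Me)$ step and the explicit reflection of $\aleph_1$-sized families to intermediate stages---but the architecture is identical.
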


To describe this model we recall the partition forcing introduced in \cite{MillerCOV} (see also \cite{IndependentCompact} and \cite[Section 1.6]{restrictedMADfamilies}). This forcing will be discussed in more length in Section 7. Given an uncountable partition of size $\aleph_1$ of $\cantor$ into closed sets $\mathcal K = \{C_\alpha \; |\; \alpha < \omega_1\}$ let $\mathbb P(\mathcal K)$ be the set of all perfect trees $p$ so that for all $\alpha$ $[p] \cap C_\alpha$ is nowhere dense in $[p]$. The order is inclusion. This forcing was first investigated by Miller who showed that it is proper, has the Laver property and adds a real not in the evaluation of any $C_\alpha$ in the extension, thus killing the fact that $\mathcal K$ is a partition of $\cantor$. It follows that, via an appropriate bookkeeping device, an $\omega_2$-length countable support iteration of forcing notions of the form $\P(\mathcal K)$ will force $\mfa_T = \aleph_2$. Later Spinas, \cite[Lemma 2.7]{partitionnumbers} proved that this forcing notion is $\baire$-bounding. Black-boxing these facts the proof of Proposition \ref{randommodel2} is as follows.

\begin{proof}
Assume $\CH$ in the ground model. Define a countable support iteration $\langle \P_\alpha,  \dot{\Q}_\alpha \;  |\; \alpha < \omega_2\rangle$ so that for even $\alpha$ $\forces_\alpha$``$\dot{\Q}_\alpha$ is random forcing" and, using some appropriate bookkeeping device, for odd $\alpha$ $\forces_\alpha$ ``$\dot{\Q}_\alpha$ is partition forcing for some uncountable partition of $2^\omega$". In the resulting model $non(\Me) = 2^{\aleph_0}$ because of the random reals added and hence $\mfa_e = \mfa_p = 2^{\aleph_0}$. Similarly $\mfa_T = 2^{\aleph_0}$ because of the partition forcing iterands. Finally note that since all iterands are $\baire$-bounding the entire iteration is $\baire$-bounding and so $\mfd=\aleph_1$.
\end{proof}

Collectively, Fact \ref{randommodel}, Proposition \ref{randommodel2} and Theorems \ref{miller2} and \ref{partitionpreservation2} show no provable inequalities exist between $\mfa_e/\mfa_p$, $\mfd$ and $\mfa_T$ aside from $\mfd \leq \mfa_T$. This is stated more precisely as Corollary \ref{ind} below.

\section{Tight Eventually Different Families}
In this section we introduce a strong version of maximality for eventually different families which can be preserved by countable support iterations of proper forcing notions. Towards this we introduce some new terminology. Throughout this section fix an eventually different family $\calE$. Recall that a {\em tree} on $\omega$ is a set $T \subseteq \omega^{<\omega}$ which is closed downwards in the sense that if $s$ is an initial segment of $t \in T$ then $s \in T$. Throughout we will only restrict our attention to pruned trees i.e. trees $T$ in which every finite sequence $t \in T$ has a proper extension. Unless stated otherwise every tree in this article is assumed to be pruned. Note that if $X$ is a set of functions then $\bigcup X \subseteq \omega^2$ and similarly if $T\subseteq \omega^{<\omega}$ is a tree then $\bigcup T \subseteq \omega^2$. Therefore it makes sense to talk about a set of functions {\em covering a tree}, namely we say that a set of functions $X$ {\em covers} a tree $T$ if $\bigcup T \subseteq \bigcup X$ and $X$ {\em almost covers $T$} if $\bigcup T \subseteq^* \bigcup X$. Given a tree $T$ and a node $t \in T$ let $T_t = \{s \in T\; | \; s \subseteq t \; {\rm or} \; t \subseteq s\}$.

\begin{definition}\label{def.treeideal}[The Tree Ideal generated by $\calE$]
The {\em tree ideal generated by} $\calE$, denoted $\mathcal I_T(\calE)$, is the set of all trees $T\subseteq \omega^{<\omega}$ so that there is a $t \in T$ and a finite set $X \in [\calE]^{<\omega}$ so that $\bigcup T_t \subseteq^* \bigcup X$.

Dually a tree $T \subseteq \omega^{<\omega}$ is in $\mathcal I_T(\calE)^+$ if for each $t \in T$ it's not the case that $\bigcup T_t$ can be almost covered by finitely many functions from $\calE$.
\end{definition}

\begin{remark}
The use of the word ``ideal" here is somewhat misleading as $\mathcal I_T(\calE)$ is not an idea, nor does it generate one. In fact it is not closed under unions. The terminology is meant to draw the analogy with the ideal generated by a MAD family, for which $\mathcal I_T(\calE)$ serves a similar purpose.
\end{remark}

If we have a tree $T \subseteq \omega^{<\omega}$ and a real $g\in \baire$ we say that $g$ {\em densely diagonalizes} $T$ if for each $t \in T$ there is an $s \supsetneq t$ so that $s \in T$ and there is a $k \in {\rm dom}(s) \setminus {\rm dom}(t)$ so that $s(k) = g(k)$. In other words, each node of $T$ sits on a branch which is infinitely often equal to $g$.

The main definition in this paper is the following.
\begin{definition}\label{def.tight}
An eventually different family $\calE$ is {\em tight} if given any sequence of countably many trees $\{T_n \; | \; n < \omega \}$ so that $T_n \in \mathcal I_T(\calE)^+$ for all $n < \omega$ there is a single $g \in \mathcal E$ which densely diagonalizes all the $T_n$'s.
\end{definition}

The point is that this strengthens maximality.
\begin{proposition}\label{if.tight.then.max}
If $\calE$ is tight then it is maximal.
\end{proposition}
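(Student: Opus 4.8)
The plan is to prove the contrapositive: assuming $\calE$ is not maximal, I will produce a single tree $T \in \calI_T(\calE)^+$ that no $g \in \calE$ densely diagonalizes, which (taking the constant sequence $T_n = T$) shows $\calE$ is not tight. So suppose there is $h \in \baire$ with $h \neq^* g$ for every $g \in \calE$. The natural candidate is the tree $T = h^{<\omega} := \{h\restrict n \;|\; n < \omega\}$, the branch through $h$; equivalently, the unique pruned tree whose only branch is $h$. First I would check that $T \in \calI_T(\calE)^+$: a node $t \in T$ is some $h \restrict n$, and $\bigcup T_t = \bigcup T = \{(m, h(m)) \;|\; m < \omega\} = h$ (as a subset of $\omega^2$); if this were almost covered by finitely many $g_0, \dots, g_{k-1} \in \calE$, then $h(m) \in \{g_0(m), \dots, g_{k-1}(m)\}$ for all but finitely many $m$, and since each $g_i$ is eventually different from $h$, for each $i$ there is $N_i$ with $g_i(m) \neq h(m)$ for $m > N_i$; taking $m$ larger than all $N_i$ and than the finite exceptional set gives a contradiction. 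Hence $\bigcup T_t$ is not almost covered by finitely many members of $\calE$, for every $t \in T$, so $T \in \calI_T(\calE)^+$.

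Next I would check that no $g \in \calE$ densely diagonalizes $T$. Fix $g \in \calE$. Since $g \neq^* h$, pick $k$ with $g(m) \neq h(m)$ for all $m \geq k$. Now take the node $t = h \restrict k \in T$. Any $s \in T$ with $s \supsetneq t$ is of the form $h \restrict n$ with $n > k$, and every index $j \in \dom(s) \setminus \dom(t) = [k, n)$ satisfies $s(j) = h(j) \neq g(j)$. So there is no $s \supsetneq t$ in $T$ with some $j \in \dom(s)\setminus\dom(t)$ and $s(j) = g(j)$; that is, the node $t$ witnesses that $g$ fails to densely diagonalize $T$. Since $g \in \calE$ was arbitrary, no member of $\calE$ densely diagonalizes $T$.

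Finally I would assemble the two observations: the constant sequence $\langle T_n \;|\; n < \omega\rangle$ with $T_n = T$ lies in $\calI_T(\calE)^+$, yet there is no single $g \in \calE$ densely diagonalizing all the $T_n$ (equivalently, densely diagonalizing $T$). By Definition~\ref{def.tight} this contradicts tightness of $\calE$. Hence if $\calE$ is tight, $\calE$ must be maximal.

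I do not expect any genuine obstacle here; the only point requiring a little care is making sure the definitions line up — that a single-branch tree really is a pruned tree in $\calI_T(\calE)^+$ and that "densely diagonalizes" as spelled out after Definition~\ref{def.treeideal} is exactly negated by exhibiting one bad node $t$. The heart of the argument is just translating "$h$ is eventually different from every $g \in \calE$" into "the branch $h$, viewed as a tree, is positive and is diagonalized by nobody in $\calE$."
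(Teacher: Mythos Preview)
Your proof is correct and follows essentially the same approach as the paper: both take the single-branch tree $T_h = \{h\restrict n \;|\; n < \omega\}$ associated to a function $h$ eventually different from every member of $\calE$, check that $T_h \in \calI_T(\calE)^+$, and observe that densely diagonalizing $T_h$ amounts to being infinitely often equal to $h$. The only difference is cosmetic: the paper phrases it as a direct contradiction (tightness yields a $g \in \calE$ with $g =^\infty h$), whereas you spell out the contrapositive more carefully; your version is in fact more detailed in verifying both that $T_h$ is positive and that no $g$ diagonalizes it.
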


\begin{proof}
Suppose $\calE$ is tight but not maximal and let $h \notin \calE$ be eventually different from every element of $\calE$. Consider now the tree $T_h = \{h \hook n \;| \; n < \omega\}$. This tree is in $\mathcal I_T(\calE)^+$ since if we could cover it by finitely many functions from $\calE$ then there would be an $f \in \calE$ so that for infinitely many $n$ $f(n) = (h\hook n+1)(n)$ i.e. $f =^\infty h$. But then by tightness there is a $g \in \calE$ which densely diagonalizes $T_h$. But this just means that $g =^\infty h$, contradiction.
\end{proof}

Later we will see that in $\ZFC$, there are maximal eventually different families that are not tight, see Theorem \ref{definable} below.

\begin{remark}
The intuition behind using trees, as opposed to simply functions is as follows. Often in preservation arguments involving maximal sets in $[\omega]^\omega$ (MAD families, maximal independent families etc) one is given a forcing notion $\P$, a condition $p \in \P$ and a $\P$-name $\dot{X}$ so that $p \forces \dot{X} \in [\omega]^\omega \check{}$ and needs to ``reflect" $\dot{X}$ to the ground model. This is usually done using the {\em outer hull} of $\dot{X}$ with respect to $p$, namely the set $\dot{X}_p := \{m \; | \; p \nVdash \check{m} \notin \dot{X}\}$. The issue in trying to import this idea to a space of functions is that the outer hull of a function (viewed as a subset of $\omega^2$) is no longer necessarily a function. However, if we have a name $\dot{f}$ for an element of $\baire$ and consider instead the set of $\{t \in \omega^{<\omega} \; | \; p \nVdash \check{t} \nsubseteq \dot{f}\}$ then this set forms a subtree of $\omega^{<\omega}$. It is for this reason that we need to work with trees. This idea is central to the argument used to prove Theorem \ref{preservation} below as well as in the applications given in Sections 3 and 6 - 9.
\end{remark}

We need to show that this definition is consistent, i.e. it is consistent that there are tight eventually different families. In fact, much more may be true. 

\begin{theorem}
Assume $\MA(\sigma{\rm -centered})$. Every eventually different family $\calE_0$ of size ${<}2^{\aleph_0}$ is contained in a tight eventually different family. In particular $\CH$ implies that tight eventually different families exist.
\label{construction}
\end{theorem}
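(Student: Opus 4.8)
The plan is to build the desired family by a transfinite recursion of length $\mfc=2^{\aleph_0}$, constructing an increasing chain $\calE_0\subseteq\calE_1\subseteq\cdots$ of eventually different families in which one potential tightness requirement is discharged at each successor step, the single-step extension being an application of $\MA(\sigma\text{-centered})$. First I would fix the bookkeeping. There are at most $\mfc$ subtrees of $\omega^{<\omega}$, hence at most $\mfc^{\aleph_0}=\mfc$ many $\omega$-sequences of such trees, so let $\langle\vec T^\alpha\mid\alpha<\mfc\rangle$, $\vec T^\alpha=\langle T^\alpha_n\mid n<\omega\rangle$, enumerate all of them. Now recursively define eventually different families $\calE_\alpha$ with $|\calE_\alpha|\le|\calE_0|+|\alpha|<\mfc$ for $\alpha<\mfc$: at limits take unions (this preserves eventual difference, which is a property of pairs); at step $\alpha+1$, if every $T^\alpha_n$ lies in $\calI_T(\calE_\alpha)^+$ set $\calE_{\alpha+1}=\calE_\alpha\cup\{g_\alpha\}$ where $g_\alpha$ comes from the Key Lemma below applied to $\calE_\alpha$ and $\vec T^\alpha$, and otherwise set $\calE_{\alpha+1}=\calE_\alpha$; finally let $\calE=\bigcup_{\alpha<\mfc}\calE_\alpha\supseteq\calE_0$.

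The crux is the following \emph{Key Lemma}: if $\calE'$ is eventually different with $|\calE'|<\mfc$ and $\langle T_n\mid n<\omega\rangle$ are trees, each in $\calI_T(\calE')^+$, then there is $g\in\baire$ with $g\neq^* f$ for all $f\in\calE'$ (so $\calE'\cup\{g\}$ is eventually different) such that $g$ densely diagonalizes every $T_n$. To prove it I would force with the poset $\mathbb{Q}$ whose conditions are pairs $(s,F)$, $s\in\omega^{<\omega}$ and $F\in[\calE']^{<\omega}$, ordered by $(s',F')\le(s,F)$ iff $s\subseteq s'$, $F\subseteq F'$, and $s'(k)\neq f(k)$ whenever $|s|\le k<|s'|$ and $f\in F$; finitely many conditions $(s,F_1),\dots,(s,F_j)$ with a common stem $s$ have the common lower bound $(s,F_1\cup\dots\cup F_j)$, so $\mathbb{Q}=\bigcup_{s\in\omega^{<\omega}}\{(s,F):F\in[\calE']^{<\omega}\}$ is $\sigma$-centered. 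The dense sets to meet are $D_m=\{(s,F):|s|\ge m\}$ for $m<\omega$ (dense since $F$ is finite, so $s$ is always prolongable avoiding the finitely many forbidden values), $D_f=\{(s,F):f\in F\}$ for $f\in\calE'$ (trivially dense), and for each $n<\omega$ and each $t\in T_n$
\[ E_{n,t}=\{(s,F):\exists u\in T_n\,(t\subsetneq u\ \wedge\ \exists k\,(|t|\le k<|u|,\ k<|s|,\ s(k)=u(k)))\}. \]
Given a filter $G$ meeting all of these, $g:=\bigcup\{s:\exists F\,(s,F)\in G\}$ is total by the $D_m$, satisfies $g\neq^* f$ for each $f$ because $G$ meets $D_f$ (beyond the stem of a condition in $G$ with $f$ in its second coordinate, all values of $g$ avoid $f$), and densely diagonalizes each $T_n$ because $G$ meets every $E_{n,t}$.

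The density of $E_{n,t}$ is the one point that requires thought, and it is exactly here that the positivity hypothesis enters. Given $(s,F)\in\mathbb{Q}$: since $T_n\in\calI_T(\calE')^+$, the set $\bigcup(T_n)_t$ is not almost covered by $\bigcup F$, while the part of $\bigcup(T_n)_t$ strictly below $t$ is finite; hence there are infinitely many $m\ge|t|$ for which some $u\in T_n$ with $t\subsetneq u$ has $m<|u|$ and $u(m)\neq f(m)$ for every $f\in F$. Choose such an $m\ge|s|$ with witness $u$ and extend $s$ to $s'$ of length $m+1$ by putting $s'(m)=u(m)$ and choosing each other new value in $\omega\setminus\{f(k):f\in F\}$ (nonempty, as $F$ is finite); then $(s',F)\le(s,F)$ and $(s',F)\in E_{n,t}$. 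In total there are $|\calE'|+\aleph_0<\mfc$ dense sets, so $\MA(\sigma\text{-centered})$ produces the required filter and hence $g$.

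Finally I would check that $\calE$ is as wanted. It is eventually different, being a union of a chain of such families. For tightness, let $\langle T_n\mid n<\omega\rangle\subseteq\calI_T(\calE)^+$ and fix $\alpha$ with $\vec T^\alpha=\langle T_n\mid n<\omega\rangle$; since $\calE_\alpha\subseteq\calE$, for every node $t$ of every $T_n$ the set $\bigcup(T_n)_t$ is not almost covered by finitely many members of $\calE$, a fortiori not by finitely many members of $\calE_\alpha$, so each $T_n\in\calI_T(\calE_\alpha)^+$ and therefore at step $\alpha$ we added $g_\alpha\in\calE$ densely diagonalizing all the $T_n$, as required. This proves the theorem for any starting family $\calE_0$ of size $<\mfc$, and, applied with $\calE_0=\emptyset$ under $\CH$, it yields the existence of tight eventually different families. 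The one real obstacle in the argument is the density of $E_{n,t}$; everything else — the bookkeeping, the $\sigma$-centeredness of $\mathbb{Q}$, and the observation that $\calI_T(\calE)^+\subseteq\calI_T(\calE_\alpha)^+$ — is routine.
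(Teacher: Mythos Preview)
Your proof is correct and follows essentially the same approach as the paper: enumerate all $\omega$-sequences of trees, recursively extend $\calE_0$ by applying $\MA(\sigma\text{-centered})$ to the same $\sigma$-centered poset of pairs $(s,F)$ at each step, and verify tightness via the inclusion $\calI_T(\calE)^+\subseteq\calI_T(\calE_\alpha)^+$. The only cosmetic difference is that the paper adds a new function at every successor step (using eventually different forcing when $\vec T_\alpha$ is not positive), whereas you leave $\calE_\alpha$ unchanged in that case; this is immaterial since tightness already implies maximality.
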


The theorem makes use of a forcing notion we introduce now. Let $\calE$ be an eventually different family (not necessarily maximal). Define the forcing notion $\P_{\calE}$ to be the set of all pairs $(s, E)$ so that the following hold.
\begin{enumerate}
\item
$s$ is a finite partial function from $\omega$ to $\omega$
\item
$E \in [\calE]^{<\omega}$
\end{enumerate}

The order on $\P_{\calE}$ is defined as follows. We let $(s_1, E_1) \leq (s_0, E_0)$ if and only if:
\begin{enumerate}
\item
$s_1 \supseteq s_0$ and $E_1 \supseteq E_0$.
\item
If $k \in {\rm dom}(s_1) \setminus {\rm dom}(s_0)$ then $s_1(k) \neq f(k)$ for each $f\in E_0$.
\end{enumerate}
This forcing notion is the same as eventually different forcing $\mathbb E$ from \cite{BarJu95}, but with the second coordinate restricted to $\calE$. It is clear that this forcing notion is $\sigma$-centered since any two conditions $(s, E)$ and $(s, F)$ with the same first coordinate are compatible and in fact strengthened by $(s, E \cup F)$.

We will show that $\P_\calE$ adds a real eventually different from every element of $\calE$ and which densely diagonalizes each $T \in \mathcal I_T(\mathcal E)^+$ in the ground model. This follows from the following lemma.
\begin{lemma}
The following sets are dense in $\P_{\calE}$.
\begin{enumerate}
\item
For each $k < \omega$ the set of conditions $(s, E)$ so that $k \in {\rm dom}(s)$.
\item
For each $T \in \mathcal I_T(\mathcal E)^+$, and $t \in T$ the set of conditions $(s, E)$ so that there is a $t' \in T$ which extends $t$ and a $k \in {\rm dom}(t') \setminus {\rm dom}(t)$ so that $s(k) = t'(k)$.
\item
For each $f \in \calE$ the set of conditions $(s, E)$ so that $f \in E$.
\end{enumerate}
\end{lemma}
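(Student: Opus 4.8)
The plan is to verify density of each of the three families of sets directly, by extending an arbitrary condition $(s,E)\in\P_{\calE}$ (I will write $\P_{\calE}$ throughout; the ``$\vec{T}$'' subscript in the statement should be read as the same poset with a fixed enumeration of the relevant trees, which plays no role in the density argument itself). Sets (1) and (3) are immediate, and the real content is in (2).

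\textbf{Density of (1).} Given $(s,E)$ and $k<\omega$, if $k\in\dom(s)$ we are done. Otherwise, since $E$ is a finite subset of $\calE$, the set $\{f(j): f\in E,\ j<\omega\}$ does not exhaust $\omega$ at coordinate $|s|$ when $|s|\le k$; more simply, extend $s$ one coordinate at a time: at each new coordinate $j$ with $|s|\le j\le k$, there are only $|E|$ forbidden values $\{f(j): f\in E\}$, so pick any value not among them. This produces $s'\supseteq s$ with $k\in\dom(s')$ and $(s',E)\le(s,E)$.

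\textbf{Density of (3).} Given $(s,E)$ and $f\in\calE$, the condition $(s,E\cup\{f\})$ refines $(s,E)$ (the second clause of the order is vacuous since the first coordinate is unchanged) and has $f$ in its second coordinate.

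\textbf{Density of (2) --- the main point.} Fix $T\in\calI_T(\calE)^+$, $t\in T$, and a condition $(s,E)$; by density of (1) we may assume $\dom(t)\subseteq\dom(s)$, and by passing to a node of $T$ of length $|s|$ extending $t$ --- which exists since $T$ is pruned --- we may even assume $t\in T$ has $\dom(t)=\dom(s)$ (note replacing $t$ by a longer node only strengthens the conclusion we must reach, since extensions of the longer node are extensions of the original). Now here is where $T\in\calI_T(\calE)^+$ is used: by definition $\bigcup T_t$ is \emph{not} almost covered by $\bigcup E$ (as $E\in[\calE]^{<\omega}$), so there are infinitely many pairs $(k,v)\in\bigcup T_t$ with $v\neq f(k)$ for all $f\in E$; since only finitely many such pairs have $k<|s|$, we may pick one with $k\ge|s|$. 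That pair lies on some node $t'\in T_t$ with $t'\supseteq t$ and $t'(k)=v$. Extending $s$ to $s'$ of length $k+1$, we are free to choose $s'(k)=v$ (this is legal against $E$ by choice of $v$), and at the intermediate coordinates $j$ with $|s|\le j<k$ we again avoid the finitely many values $\{f(j):f\in E\}$. Then $(s',E)\le(s,E)$, $t'\in T$ extends $t$, $k\in\dom(t')\setminus\dom(t)$, and $s'(k)=v=t'(k)$, as required.

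\textbf{The main obstacle} is purely bookkeeping in (2): one must be careful that the node $t'\in T$ witnessing the infinitely-often-equal coordinate is a genuine extension of $t$ (not merely some node of $T$ below it), which is exactly why the ideal-positivity is phrased pointwise as ``$\bigcup T_t$ is not almost covered'' rather than ``$\bigcup T$ is not almost covered'' --- the set $T_t$ consists precisely of the nodes comparable to $t$, so an element of $\bigcup T_t$ at a coordinate $k\ge|t|$ automatically sits on an extension of $t$. Once this is unwound, each clause reduces to the elementary observation that a finite set of functions forbids only finitely many values at each coordinate, so there is always room to diagonalize. After the lemma, the theorem follows in the usual way: under $\MA(\sigma\text{-centered})$ (and $|\calE|<2^{\aleph_0}$), meeting the $<2^{\aleph_0}$ many dense sets above --- one for each $k$, each $(T,t)$ with $T$ ranging over $\calI_T(\calE)^+$, and each $f\in\calE$ --- yields a filter whose union $g=\bigcup\{s:(s,E)\in G\}$ is a total function eventually different from every $f\in\calE$ (by clause (3) together with the order) and densely diagonalizing every $T\in\calI_T(\calE)^+$ (by clause (2)); iterating this construction $2^{\aleph_0}$ times, adding such a $g$ at each stage and closing off, produces a tight eventually different family extending $\calE_0$.
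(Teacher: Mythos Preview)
Your proof of the lemma is correct and follows essentially the same approach as the paper's: the paper handles all three dense sets simultaneously (first adjoin $f$ to $E$, then find the node $t'$ and coordinate $l_t$ using $T_t\in\mathcal I_T(\calE)^+$, then fill in the missing coordinate $k$), whereas you treat them separately and are slightly more careful about filling in the intermediate coordinates---a purely cosmetic difference.

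One small remark on your post-lemma sketch: as written, ``each $(T,t)$ with $T$ ranging over $\calI_T(\calE)^+$'' describes $2^{\aleph_0}$ many dense sets, not ${<}2^{\aleph_0}$ many. The paper's construction (and what you clearly intend, given the ``iterating $2^{\aleph_0}$ times'' clause) only asks the generic $g_\alpha$ at stage $\alpha$ to densely diagonalize the countably many trees in a single $\vec{T}_\alpha$, not all of $\calI_T(\calE_\alpha)^+$; tightness of the final family then follows from the bookkeeping over all $\alpha<2^{\aleph_0}$. This does not affect the lemma itself.
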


Note that condition 1 ensures the generic is a total function on $\omega$, condition 2 ensures the function densely diagonalizes all the trees from $\mathcal I_T(\calE)^+$ in the ground model and condition 3 guarantees that the generic function is eventually different from every element of $\calE$.

\begin{proof}
Let $(s, E)$ be a condition. We will kill three birds with one stone and show that there is a stronger condition in all three dense sets at once. First, by the explanation of $\sigma$-centeredness given above, $(s, E \cup \{f\}) \leq (s, E)$ and is in the last dense set. Now, fix $T \in \mathcal I_T(\calE)^+$ and $t \in T$. By assumption we know that $T_t$ cannot be almost covered by $\bigcup E \cup \{f\}$. It follows that there is a node $t' \supsetneq t$ in $T$ and an $l_t \in {\rm dom}(t') \setminus {\rm dom}(t)$ so that $t'(l_t) \neq f'(l_t)$ for all $f' \in E \cup \{f\}$. Moreover since ${\rm dom}(s)$ is finite we can ensure that $l_t \notin {\rm dom}(s)$. Let $s' = s \cup \{(l_t, t'(l_t) )\}$. Now, fix $k \in \omega$. If $k \notin {\rm dom}(s')$ then let $s'' = s' \cup \{(k, l_k)\}$ where $l_k$ is the minimal element not in the (finite) set $\{f(k) \; | \; f \in E\}$. Finally we get that $(s'', E \cup \{f\} ) \leq (s, E)$, which is what we needed to show.
\end{proof}

Given this we can now prove Theorem \ref{construction}. 

\begin{proof}[Proof of Theorem \ref{construction}]
Assume $\MA(\sigma{\rm -centered})$. Enumerate all $\omega$-sequences of subtrees of $\omega^{<\omega}$ as $\{\vec{T}_\alpha \; | \; \alpha < 2^{\aleph_0}\}$. For each $\alpha < 2^{\aleph_0}$ and $n < \omega$ denote by $T_\alpha(n)$ the $n^{\rm th}$ tree in $\vec{T}_\alpha$. Fix an eventually different family $\calE_0$ of size ${<}2^{\aleph_0}$. We will recursively build a continuous $\subseteq$-increasing sequence of eventually different families $\{\calE_\alpha \; | \; \alpha < 2^{\aleph_0}\}$ so that for each $\alpha < 2^{\aleph_0}$ $\calE_\alpha$ has cardinality ${<}2^{\aleph_0}$ and if $\vec{T}_\alpha \in \mathcal I_T(\calE_\alpha)^+$ then there is a real in $\calE_{\alpha+1}$ densely diagonalizing $T_\alpha(n)$ for each $n < \omega$. If such a sequence can be constructed then clearly $\calE = \bigcup_{\alpha < 2^{\aleph_0}} \calE_\alpha$ will be the desired tight eventually different family. 

We already have $\calE_0$. Now suppose that we have constructed $\calE_\alpha$. If $\vec{T}_\alpha \nsubseteq \mathcal I_T(\calE_\alpha)^+$ then let $g_\alpha$ be any function eventually different from every element of $\calE_\alpha$. That such an element exists is guaranteed by $\MA(\sigma{\rm -centered})$ (using eventually different forcing). If $\vec{T}_\alpha \subseteq \mathcal I_T(\calE_\alpha)^+$ then, by applying $\MA (\sigma {\rm -centered})$ to $\P_{\calE_{\alpha}}$ and noting that we only need to meet $|\calE_\alpha| + \aleph_0$ dense sets, find a $g_\alpha$ eventually different from each element of $\calE_\alpha$ and densely diagonalizing every $T_\alpha(n)$ for $n < \omega$. In either case let $\calE_{\alpha+1} = \calE_\alpha \cup \{g_\alpha\}$.  Either way it's clear that we have fulfilled the requisite conditions.
\end{proof} 

Of course a natural question is whether tight eventually different families exist in $\ZFC$. We do not know the answer to this. However the analogous question for tight MAD families, namely if there is a tight MAD family in $\ZFC$, remains one of the most stubbornly open problems in this area so it's reasonable to expect that finding a $\ZFC$ example of a tight eventually different family, or showing one does not exist may be difficult as well.

\begin{question}
Is it consistent with $\ZFC$ that there are no tight maximal eventually different families?
\end{question}

\section{Cohen Indestructibility and Definability}
Let us investigate the analogy between tight eventually different families and tight MAD families further. As mentioned in the introduction, in \cite[Corollary 3.2]{orderingMAD} it is shown that tight MAD families are Cohen indestructible. Here, recall that if $\P$ is a forcing notion and $\calA$ is MAD then it is said to $\P$-{\em indestructible} if $\forces_\P$ ``$\check{\calA}$ is MAD".  Here, we show that the same holds true of tight eventually different families. We then use this to show that there are no analytic tight eventually different families and, under large cardinals, no definable such families in a strong sense.

\begin{theorem}
Suppose $\kappa$ is a cardinal and $\mathbb C_\kappa = add(\omega, \kappa)$ is the forcing to add $\kappa$-many Cohen reals. If $\calE$ is tight then $\forces_{\mathbb C_\kappa}$ ``$\check{\calE}$ is tight".
\label{cohenindestructible}
\end{theorem}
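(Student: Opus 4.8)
The plan is to reduce the case of adding $\kappa$-many Cohen reals to the case of a single Cohen real by a standard factorization argument, and then to handle the single-Cohen case by a name-reflection argument using the tree structure built into $\mathcal I_T(\calE)$. For the factorization: suppose $\dot{\vec T} = \langle \dot T_n \mid n < \omega\rangle$ is a $\mathbb C_\kappa$-name for a sequence of trees in $\mathcal I_T(\check\calE)^+$, and let $p \in \mathbb C_\kappa$ be a condition. Since each $\dot T_n$ is (forced to be) a subset of the countable set $\omega^{<\omega}$, the nice name for $\dot{\vec T}$ involves only countably many antichains, hence is essentially a $\mathbb C_\kappa\restriction A$-name for some countable $A \subseteq \kappa$ with $p \in \mathbb C_\kappa \restriction A$. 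Writing $\mathbb C_\kappa \cong (\mathbb C_\kappa \restriction A) \times (\mathbb C_\kappa \restriction (\kappa\setminus A))$ and noting $\mathbb C_\kappa \restriction A \cong \mathbb C$ (Cohen forcing), it suffices to prove the theorem for $\mathbb C$; the tail factor is added over the model $V[\mathbb C]$ and is irrelevant to $\dot{\vec T}$, while $\calE$ remains eventually different (indeed Cohen forcing adds no infinitely-often-equal real colliding with all of $\calE$ — this is exactly the statement that $\calE$ stays maximal, which already follows once we know it stays tight, but we do not even need it separately).

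For the single Cohen real case, fix a $\mathbb C$-name $\dot{\vec T}$ and a condition $p$ forcing each $\dot T_n \in \mathcal I_T(\check\calE)^+$. For each $n$ define the \emph{outer hull tree}
\[
S_n \;=\; \{\, t \in \omega^{<\omega} \;\mid\; \exists q \le p \ \ q \Vdash \check t \in \dot T_n \,\}.
\]
Each $S_n$ is a subtree of $\omega^{<\omega}$ (downward closure is immediate since $\dot T_n$ is forced to be a tree), and it is pruned because $\dot T_n$ is forced to be pruned and $\mathbb C$ is countable so every node of $S_n$ has, below some condition, a proper extension into $\dot T_n$. The key claim is that $S_n \in \mathcal I_T(\calE)^+$ in $V$: given $t \in S_n$, pick $q \le p$ with $q \Vdash \check t \in \dot T_n$; since $q \Vdash \dot T_n \in \mathcal I_T(\check\calE)^+$, it forces that $\bigcup (\dot T_n)_{\check t}$ is not almost covered by any finite $X \in [\check\calE]^{<\omega}$, and by a density argument (using that $\mathbb C$ below $q$ is still Cohen forcing and that the relevant statement is about the countable object $(\dot T_n)_t$) one extracts, for each finite $X$ and each $k$, a node $t' \supsetneq t$ in $S_n$ and an $l \in \mathrm{dom}(t')\setminus\mathrm{dom}(t)$ with $l > k$ and $t'(l) \neq f(l)$ for all $f \in X$. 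Hence $\bigcup (S_n)_t$ is not almost covered by $\bigcup X$ for any finite $X \subseteq \calE$, i.e. $S_n \in \mathcal I_T(\calE)^+$.

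Now apply tightness of $\calE$ in $V$ to the sequence $\langle S_n \mid n<\omega\rangle$ to obtain a single $g \in \calE$ that densely diagonalizes every $S_n$. It remains to check that $p \Vdash$ ``$\check g$ densely diagonalizes $\dot T_n$'' for every $n$, which will finish the proof since $g \in \check\calE$. Fix $n$, a condition $q \le p$, and a name $\dot t$ with $q \Vdash \dot t \in \dot T_n$; strengthening $q$ we may assume $q \Vdash \dot t = \check t$ for some fixed $t \in \omega^{<\omega}$, and then $t \in S_n$. Since $g$ densely diagonalizes $S_n$ there is $s \supsetneq t$ with $s \in S_n$ and some $l \in \mathrm{dom}(s)\setminus\mathrm{dom}(t)$ with $s(l) = g(l)$; pick $q' \le q$, $q' \le p$ forcing $\check s \in \dot T_n$ (possible since $s \in S_n$ — though one must arrange $q'$ below $q$, which is where the density argument above needs to be run below $q$ rather than below $p$; this is harmless since the whole argument relativizes to any condition extending $p$). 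Then $q'$ forces that $\check s \in \dot T_n$, $\check t \subseteq \check s$, and $\check s(l) = \check g(l)$, witnessing the diagonalization requirement for $\dot t$. As $q$ and $\dot t$ were arbitrary below $p$, density gives $p \Vdash$ ``$\check g$ densely diagonalizes $\dot T_n$''.

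\textbf{Main obstacle.} The delicate point is the claim that $S_n \in \mathcal I_T(\calE)^+$, and more precisely ensuring that the witnesses to non-coverability in $\dot T_n$ survive the passage to the outer hull — one needs that ``not almost covered by $\bigcup X$'' is, below a suitable condition, reflected by \emph{nodes that actually land in $S_n$} with the colliding coordinate pushed arbitrarily high. Because $\mathbb C$ is countable and the statement concerns the countable set $(\dot T_n)_t$, a careful density/fusion-free bookkeeping argument handles this, but it must be carried out with the quantifier over all finite $X \in [\calE]^{<\omega}$ and all thresholds $k$ simultaneously. A secondary subtlety, flagged above, is that all these density arguments must be performed below an arbitrary extension of $p$, not just below $p$ itself, so that the final verification that $p \Vdash$ ``$\check g$ densely diagonalizes $\dot T_n$'' goes through; since the hypotheses are preserved under passing to $q \le p$, this is routine but should be stated with care.
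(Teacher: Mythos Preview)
Your overall strategy matches the paper's, but there is a real gap at the point you yourself flag as the ``main obstacle,'' and your proposed fix does not close it. You apply tightness to the single family $\langle S_n \mid n<\omega\rangle$ of outer hulls below $p$, obtaining one $g \in \calE$. In the verification step you take $q \le p$ deciding $\dot t = \check t$, use that $g$ densely diagonalizes $S_n$ to find $s \supsetneq t$ in $S_n$ with $s(l)=g(l)$, and then need $q' \le q$ forcing $\check s \in \dot T_n$. But membership of $s$ in $S_n$ only gives a witness $r \le p$, which need not be compatible with $q$. Your remedy (``run the argument below $q$ instead of below $p$'') would require $g$ to densely diagonalize the outer hull $S_n^q$ computed below $q$; this is a different tree from $S_n$, and $g$ was fixed before $q$ was chosen, so you cannot re-select $g$ at this stage.

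The paper resolves exactly this by building the relativization into the family to which tightness is applied: enumerate all conditions below $p$ as $\{p_j \mid j<\omega\}$, form the outer hull trees $T_{n,j}$ below each $p_j$, and apply tightness once to the countable family $\{T_{n,j} \mid n,j<\omega\}$ to get a single $g$ that densely diagonalizes all of them simultaneously. Then for arbitrary $q=p_m \le p$ deciding $\dot t = \check t$, one works inside $T_{n,m}$, and the witness $s$ produced by dense diagonalization of $T_{n,m}$ comes with a condition below $p_m = q$ by definition. Your sketch becomes correct once you replace $\langle S_n\rangle$ by $\langle S_n^q : n<\omega,\ q\le p\rangle$ before invoking tightness; as written, the order of quantifiers is wrong.
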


\begin{proof}
Since every new real in the Cohen extension is added by a single Cohen real, it suffices to prove the theorem in the case $\kappa = 1$. Denote $\mathbb C_1$ by $\mathbb C$. Let $\calE$ be tight and let $\{\dot{T}_n \; | \; n < \omega\}$ be a countable set of $\mathbb C$-names for subtrees of $\omega^{<\omega}$. We will show that for each $p \in \mathbb C$ either there is a $q \leq p$, an $n < \omega$ so that $q \forces \dot{T}_n \in \mathcal I_T(\calE)\check{}$ or there is a $g \in \calE$ so that $p \forces$ ``For each $n < \omega$, $\check{g}$ densely diagonalizes $\dot{T}_n$". Clearly the theorem will follow from this.

Fix $p \in \mathbb C$ and enumerate the set of conditions below $p$ as $\{p_j \; | \; j < \omega\}$. For each $n, j < \omega$ let $T_{n, j} = \{t \in \omega^{<\omega} \; |\; \exists q \leq p_j \, q \forces \check{t} \in \dot{T}_n \}$. For each $n, j < \omega$ the set $T_{n, j}$ is a tree since if some $t \in T_{n, j}$ as witnessed by some $q$ then the same $q$ witnesses that $s \in T_{n, j}$ for each $s \subseteq t$. There are two cases.

\noindent \underline{Case 1}: There are $n, j< \omega$ and $t\in T_{n, j}$ so that $(T_{n, j})_t$ is almost covered by finitely many functions from $\calE$, i.e. $T_{n, j} \in \mathcal I_T(\calE)$. Fix $q \leq p_j$ so that $q$ witnesses that $t \in T_{n, j}$ and suppose that $f_0, ..., f_{n-1} \in \calE$ are such that $\bigcup (T_{n, j})_t \subseteq^* \bigcup\{f_0, ..., f_{n-1}\}$. Unwinding the definition of $T_{n, j}$ we get that in this case $q$ forces $\emptyset \neq (\dot{T}_n)_{\check{t}} \subseteq (T_{n, j})_t$ since if $t'$ is compatible with $t$ and some $r \leq q$ forces $\check{t}' \in \dot{T}_n$ then $r$ witnesses that $t' \in (T_{n, j})_t$ and since $q$ forces that $t \in \dot{T}_n$ then $q$ forces in particular that $(\dot{T}_n)_t$ is non empty. It follows that $q$ forces that $\bigcup (\dot{T}_n)_t \subseteq^* \bigcup\{\check{f}_0, ..., \check{f}_{n-1}\}$. In other words, $q$ must force that $\dot{T}_n$ is in $\mathcal I_T(\calE)$. 

\noindent \underline{Case 2}: Case 1 fails. In other words this means that each $T_{n, j}$ is not in $\mathcal I_T(\calE)$. By the assumption that $\calE$ is tight, this means that there is a single $g \in \calE$ which densely diagonalizes all of the $T_{n, j}$'s. We claim that $p \forces$ ``For all $n < \omega$ $g$ densely diagonalizes $\dot{T}_n$". To see this, fix $n < \omega$ and let $q = p_m < p$ for some $m$. Suppose $\dot{t}$ is a name for an element of $\dot{T}_n$. By strengthening $q$ if necessary we can assume that $\dot{t}$ is decided by $q$ to be some $t \in T_{n, m}$. Moreover, since $g$ densely diagonalizes $T_{n, m}$ there is an $s \supseteq t$ which is in $T_{n, m}$ as witnessed by some $r\leq q$ and an $l \in {\rm dom}(s) \setminus {\rm dom}(t)$ and $s(l) = g(l)$. It follows that $r$ forces there to be a node of $\dot{T}_n$ strengthening $\dot{t}$ and agreeing with $g$ above the domain of $\dot{t}$. Since $q$ was arbitrary it follows that $p$ actually forces this for each $\dot{t}$ and $n$ and hence $p$ forces $g$ to densely diagonalize each $\dot{T}_n$ as needed.
\end{proof}

It follows from this result and Theorem \ref{pi11medf} below that there is a coanalytic Cohen indestructible maximal eventually different family. This was known, \cite{indestmcg}, but only by a more complicated construction. We also have the following result.
\begin{corollary}
In the Cohen model there is a tight eventually different family of size $\aleph_1$.
\end{corollary}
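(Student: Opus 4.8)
The plan is to combine Theorem~\ref{construction} (or rather its method) with Theorem~\ref{cohenindestructible}. Recall that the Cohen model is obtained by forcing with $\mathbb{C}_{\omega_2} = \add(\omega,\omega_2)$ over a model of \CH. First I would work in the ground model $V \models \CH$ and use Theorem~\ref{construction} to fix a tight eventually different family $\calE$ of size $\aleph_1 = 2^{\aleph_0}$. Now pass to the extension $V[G]$ by $\mathbb{C}_{\omega_2}$. By Theorem~\ref{cohenindestructible}, $\forces_{\mathbb{C}_{\omega_2}}$ ``$\check{\calE}$ is tight'', so in $V[G]$ the family $\calE$ is a tight eventually different family. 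Since $\mathbb{C}_{\omega_2}$ is ccc it preserves cardinals, so $\calE$ still has size $\aleph_1$ in $V[G]$. This already gives a tight eventually different family of size $\aleph_1$ in the Cohen model, which is the statement of the corollary.

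One subtlety worth addressing is what exactly is meant by ``the Cohen model'' — it is most naturally $\mathbb{C}_{\omega_2}$ over a model of \GCH\ (or at least \CH), and that is the reading under which the corollary as stated makes sense, since otherwise one might want $2^{\aleph_0} > \aleph_1$. Under that reading the argument above is essentially immediate. The only real work is bookkeeping: one needs the ground model to satisfy \CH\ so that Theorem~\ref{construction} yields a witness of size exactly $\aleph_1$, and one needs ccc-ness of $\mathbb{C}_{\omega_2}$ to know that ``size $\aleph_1$'' is absolute. Both are standard.

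The main (mild) obstacle, if any, is conceptual rather than technical: one must verify that tightness as formulated in Definition~\ref{def.tight} is the kind of property that Theorem~\ref{cohenindestructible} is about, i.e. that the quantification over $\omega$-sequences of trees in $\calI_T(\calE)^+$ in the \emph{larger} model is controlled by the ground-model tightness. But this is precisely the content of the proof of Theorem~\ref{cohenindestructible}: it shows that any countable family of $\mathbb{C}$-names for subtrees of $\omega^{<\omega}$ is either (densely) forced into $\calI_T(\calE)$ by some condition, or diagonalized by a single ground-model $g \in \calE$. Since the theorem is already stated for $\mathbb{C}_\kappa$ with arbitrary $\kappa$ (and in particular $\kappa = \omega_2$), there is nothing further to prove; the corollary follows by simply citing it together with Theorem~\ref{construction}.

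\begin{proof}
Work over a ground model $V$ satisfying \CH. By Theorem~\ref{construction}, there is in $V$ a tight eventually different family $\calE$ of size $\aleph_1 = 2^{\aleph_0}$. Let $G$ be $\mathbb{C}_{\omega_2}$-generic over $V$, where $\mathbb{C}_{\omega_2} = \add(\omega,\omega_2)$; the Cohen model is $V[G]$. By Theorem~\ref{cohenindestructible} (applied with $\kappa = \omega_2$), $\forces_{\mathbb{C}_{\omega_2}}$ ``$\check{\calE}$ is tight'', so $\calE$ is a tight eventually different family in $V[G]$. Since $\mathbb{C}_{\omega_2}$ has the countable chain condition it preserves cardinals, so $|\calE| = \aleph_1$ still holds in $V[G]$. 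Thus $\calE$ witnesses the corollary.
\end{proof}
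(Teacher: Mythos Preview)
Your proof is correct and follows exactly the approach intended by the paper, which states the corollary without explicit proof immediately after Theorem~\ref{cohenindestructible}: start from a tight eventually different family of size $\aleph_1$ in a \CH\ ground model via Theorem~\ref{construction}, preserve tightness by Theorem~\ref{cohenindestructible}, and preserve cardinality by ccc-ness of $\mathbb{C}_{\omega_2}$.
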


We can use this theorem to show that not every maximal eventually different family is tight. In fact something stronger is true.
\begin{theorem}
If $\calE$ is an analytic maximal eventually different family and $\P$ is any forcing notion adding a real then $\forces_\P$ ``$\check{\calE}$ is no longer maximal". In particular, tight eventually different families are never analytic.
\label{definable}
\end{theorem}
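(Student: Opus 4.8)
The plan is to exploit the classical dichotomy for analytic sets together with the Cohen indestructibility just established. First I would recall that an analytic maximal eventually different family $\calE$, viewed as a subset of $\baire$, must be meager: if it were non-meager it would be comeager on some basic open set (by the Baire property of analytic sets), and a standard genericity/fusion argument produces a real eventually different from all of $\calE$, contradicting maximality — this is essentially the observation underlying $non(\Me) \le \mfa_e$ from the preliminary Fact. So $\calE$ is a meager analytic set, hence contained in a meager $F_\sigma$ set coded in $V$; equivalently there is an increasing sequence of compact (finitely branching) nowhere dense trees covering $\calE$.

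Next I would argue that over any forcing $\P$ adding a new real, $\calE$ cannot remain maximal. The key point is that, by Theorem~\ref{cohenindestructible}, if $\calE$ were tight it would stay tight, hence maximal, after adding Cohen reals; but more to the point, for analytic $\calE$ the maximality statement ``$\calE$ is a maximal eventually different family'' is $\Pi^1_2$ and, crucially, the witnessing code for $\calE$ is absolute. The real work is: given a $\P$-name $\dot g$ for a new real and a condition $p$, find in $V$ an eventually different extension. Since $\calE \subseteq \bigcup_n S_n$ with each $S_n$ a compact nowhere dense set, and since $\P$ adds a new real, in $V[G]$ there is a Cohen-like real over the relevant countable elementary submodel — more carefully, I would use that adding any new real adds a real $c$ which is ``eventually different generic'' relative to the meager set covering $\calE$, by the characterization of $non(\Me)$ as the eventually different bounding number together with the fact (Bartoszyński) that a new real always computes such a generic. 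Then $c \ne^* f$ for every $f \in \calE^{V[G]} = \calE^V$ (using that the code for the analytic set $\calE$ evaluates to a superset of $\calE^V$ and contains no new members witnessing failure), so $\calE$ is not maximal in $V[G]$.

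For the second sentence of the theorem — tight families are never analytic — I would simply combine the first part with Proposition~\ref{if.tight.then.max} and the existence result: by Theorem~\ref{construction}, tight eventually different families exist, and by definition a tight family is maximal; but any forcing adding a real (e.g. Cohen forcing) destroys the maximality of an analytic maximal eventually different family by the first part, whereas Theorem~\ref{cohenindestructible} says a tight family stays tight, hence maximal, after adding Cohen reals. These two facts are contradictory unless no tight family is analytic. Concretely: if $\calE$ were analytic and tight, force with $\mathbb C$; then $\calE$ is both still tight (Theorem~\ref{cohenindestructible}), hence still maximal, and no longer maximal (first part of the present theorem) — contradiction.

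\emph{Main obstacle.} The delicate step is the first part: showing that \emph{every} real-adding forcing, not just Cohen forcing, destroys maximality of an analytic $\calE$. The clean route is to reduce to the statement that whenever $V[G]$ contains a real not in $V$, it contains a real eventually different from every ground-model real lying in a fixed ground-model coded meager set; this is where one invokes Bartoszyński's characterization ($non(\Me)$ = least size of a family with no eventually different real, equivalently the combinatorial characterization of meagerness via chopped reals) and the absoluteness of analytic-set membership. Making the ``$\calE^{V[G]} = \calE^V$'' step rigorous requires noting that an analytic set has the same elements that are ground-model reals across the extension, and that a real $g \in V[G]$ is in the analytic set iff a $\Sigma^1_1$ formula holds, which is upward absolute; one must check the relevant maximality is genuinely about the \emph{reinterpreted} analytic set and that this reinterpretation still consists of pairwise eventually different functions — which follows since that is a $\Pi^1_1$ (hence absolute) property of the code.
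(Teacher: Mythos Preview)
Your argument for the second sentence (tight families are never analytic) is correct and matches the intended reasoning: combine the first part with Cohen indestructibility.

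The first part, however, has a real gap. You never justify the central claim that \emph{any} forcing adding a new real produces a real eventually different from every element of a fixed ground-model-coded meager set. The appeal to ``the fact (Bartoszy\'nski) that a new real always computes such a generic'' is not a fact: Sacks forcing, for example, adds a new real but adds no unbounded or eventually different real over the ground model, and there is no reason the meagerness of a covering set $\bigcup_n S_n$ should manufacture one. Your auxiliary claim that the $S_n$ can be taken \emph{compact} (finitely branching) is also unjustified---meager in $\baire$ does not give $\sigma$-compact---and the line ``$\calE^{V[G]} = \calE^V$'' conflates the reinterpreted analytic set with the ground-model family. The ``main obstacle'' paragraph correctly identifies the difficulty but does not resolve it.

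The paper's proof avoids all of this by using the perfect set property rather than meagerness. Since $\calE$ is uncountable analytic, it contains a perfect set, hence there is a continuous injection $f:\cantor\to\calE$ coded in $V$. The statement ``$\calE$ is an eventually different family'' is $\Pi^1_1$ in the analytic code, so the reinterpreted $\calE^{V[G]}$ is still eventually different. If $x\in V[G]\setminus V$ is any new real, then $f(x)\in\calE^{V[G]}$ is a new real eventually different from every element of $\calE^V\subseteq\calE^{V[G]}$, so $\check{\calE}$ is not maximal. This argument uses only that $\P$ adds some real, with no need to produce anything Cohen-like.
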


Since there are Borel maximal eventually different families, see \cite{BorelMedf}, these ones are not tight. 

\begin{proof}[Proof of Theorem \ref{definable}]
Let $\mathcal A$ be an analytic, maximal eventually different family. Note that $\mathcal A$ is uncountable and therefore, by the perfect set property for analytic sets, it contains a perfect set $P \subseteq \mathcal A$. Moreover, since every perfect set contains a copy of Cantor space, we can find a continuous injection $f:\cantor \to \mathcal A$. Fix such an $f$. 

Observe that ``$\mathcal A$ is an eventually different family" is $\Pi^1_1$: namely we have $\forall x, y [ x \notin \mathcal A \lor y \notin \mathcal A \lor x \neq^* y]$. Since $\mathcal A$ is analytic, and hence its complement is co-analytic the observation follows. In particular, the interpretation of $\mathcal A$ remains an eventually different family in any forcing extension. Now let $\P$ be a forcing notion adding a real, $x$. Working in $V^\P$ we consider the real $f(x) \in \mathcal A^{V^\P}$. This real is eventually different from every element of $\mathcal A \cap V$ and hence $\mathcal A \cap V$ is no longer maximal. 
\end{proof}

Observe the theorem only used the perfect set property for $\mathcal A$ alongside some generic absoluteness. As a result the same proof gives the following.

\begin{theorem}
\begin{enumerate}
\item
If $E$ is a co-analytic maximal eventually different family which is indestructible with respect to some forcing adding a real then $E$ does not contain a perfect set.

\item
In the presence of sufficient large cardinals there are no $\P$-indestructible maximal eventually different families in $L(\mathbb R)$ for any forcing notion $\P$ adding a real.
\end{enumerate}
\end{theorem}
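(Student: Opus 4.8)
The plan is to rerun the proof of Theorem~\ref{definable} with two modifications. First, the perfect set is no longer automatic: for (1) we have it as an explicit hypothesis (we argue contrapositively), and for (2) we will obtain it from large cardinals. Second, once $E$ is only co-analytic the sentence ``$E$ is an eventually different family'' has complexity $\Pi^1_2$ rather than $\Pi^1_1$, so Mostowski absoluteness must be upgraded to Shoenfield absoluteness, and in the $L(\R)$ case to generic absoluteness of the theory of $L(\R)$.

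For (1): suppose $E$ is co-analytic, maximal eventually different, and indestructible with respect to some forcing $\P$ that adds a real, and assume toward a contradiction that $E$ contains a perfect set $P$. First I would produce, in $V$, a continuous injection $f\colon\cantor\to E$ by threading a Cantor scheme through the tree of $P$; note $f$ is coded by a real of $V$. Writing $E=\{a:\psi(a)\}$ for a $\Pi^1_1$ formula $\psi$ with a real parameter, ``$a\notin E$'' is $\Sigma^1_1$, so ``$E$ is eventually different'', i.e.\ $\forall a\,\forall b\,(\neg\psi(a)\lor\neg\psi(b)\lor a\neq^* b)$, is $\Pi^1_2$; by Shoenfield absoluteness this survives into $V^\P$, and by Mostowski absoluteness of $\psi$ we get $E^V\subseteq E^{V^\P}$, so in $V^\P$ the reinterpreted $E$ is still an eventually different family extending $E^V$. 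Since $\P$ adds a real, working in $V^\P$ fix $x\in\cantor$ with $x\notin V$. Then $f(x)\in E^{V^\P}$, and $f(x)\notin V$: as $f$ is a continuous injection, $f^{-1}$ is Borel with a code in $V$ (Lusin--Souslin), so $f(x)\in V$ would give $x=f^{-1}(f(x))\in V$. Hence $f(x)$ is distinct from, and therefore eventually different from, every element of $E^V$, so $E^V$ is not maximal in $V^\P$, contradicting indestructibility.

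For (2): I would assume large cardinals sufficient to guarantee both that the perfect set property holds for all sets of reals in $L(\R)$ (which follows from ${\AD}^{L(\R)}$) and that the first-order theory of $L(\R)$ with real and ordinal parameters is invariant under set forcing; a proper class of Woodin cardinals suffices. Given $E\in L(\R)$ maximal eventually different and indestructible with respect to some $\P$ adding a real, $E$ is uncountable, hence uncountable in $L(\R)$, hence contains a perfect set $P$ by the perfect set property, and I again extract a continuous injection $f\colon\cantor\to E$ coded by a real. Working in $V^\P$, fix $x\in\cantor$ with $x\notin V$. Applying generic absoluteness to the formula defining $E$ over $L(\R)$ (with its real/ordinal parameters), $L(\R)^{V^\P}=L(\R^{V^\P})$ still computes $E$ as an eventually different family, and $E^V\subseteq E^{L(\R^{V^\P})}$; moreover $f(x)\in L(\R^{V^\P})$ because $f$ and $x$ are coded by reals lying in it. As in part (1), $f(x)\notin V$, so $f(x)$ witnesses that $E^V$ is no longer maximal in $V^\P$, a contradiction.

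I do not anticipate a serious obstacle; this is a complexity-aware rerun of Theorem~\ref{definable}. The two points requiring care are the absoluteness bookkeeping --- recognizing that for co-analytic $E$ the sentence ``$E$ is eventually different'' is only $\Pi^1_2$, so one must use Shoenfield absoluteness (resp.\ generic absoluteness for $L(\R)$) in place of Mostowski --- and, in (2), selecting the correct large-cardinal hypothesis and verifying that $L(\R^{V^\P})$ correctly computes $f(x)$ together with the reinterpretation of $E$. The Cantor-scheme construction of $f$ and the Lusin--Souslin fact that a continuous injection maps no new real to an old real are routine.
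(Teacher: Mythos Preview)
Your proposal is correct and follows the same approach as the paper. In fact the paper gives no separate proof of this theorem at all: it simply observes that the proof of Theorem~\ref{definable} used only the perfect set property for the family together with appropriate generic absoluteness, and declares that the same proof yields the present statement. Your write-up is a careful unpacking of that observation, and your attention to the complexity shift (that for co-analytic $E$ the sentence ``$E$ is eventually different'' is $\Pi^1_2$ rather than $\Pi^1_1$, so Shoenfield replaces Mostowski, and in the $L(\R)$ case one needs generic absoluteness of $L(\R)$ from a proper class of Woodins) makes explicit what the paper leaves implicit. Your use of Lusin--Souslin to check $f(x)\notin V$ is likewise a point the paper glosses over. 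One small step you leave tacit: the reason $f(x)\in E^{V^\P}$ is that ``$\forall y\in\cantor\;\psi(f(y))$'' is itself $\Pi^1_1$ in the code for $f$, hence upward absolute; you might state this explicitly alongside $E^V\subseteq E^{V^\P}$.
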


Note also that all of these results apply equally in the case of cofinitary groups and maximal eventually different sets of permutations. In particular, there is no analytic, maximal cofinitary group which is indestructible for any forcing notion adding a real.

\section{Strong Preservation of Tightness}
The purpose of this section is to prove a preservation theorem for tight, eventually different families, akin to \cite[Corollary 32]{restrictedMADfamilies} which showed the same for tight MAD families. The preservation theorem we prove concerns the notion of {\em strong preservation}. 

\begin{definition}
Let $\P$ be a proper forcing notion and $\calE$ a tight eventually different family. We say that $\P$ {\em strongly preserves the tightness of} $\calE$ if for every sufficiently large $\theta$, every condition $p$ and every $M \prec H_\theta$ countable with $p, \P, \calE \in M$, if $g \in \calE$ strongly diagonalizes every element of $M \cap \mathcal I_T(\calE)^+$ then there is an $(M, \P)$-generic $q \leq p$ so that $q$ forces that $g$ densely diagonalizes every element of $M[\dot{G}] \cap \mathcal I_T(\calE)^+$. Such a $q$ is called an $(M, \P, \calE, g)$-generic condition. 
\label{strongpres}
\end{definition}

Clearly if $\P$ strongly preserves the tightness of $\calE$ then in particular $\forces_\P$`` $\check{\calE}$ is tight". The point of this section is to prove the following.

\begin{theorem}
Suppose that $\calE$ is a tight eventually different family. If $\langle \P_\alpha, \dot{\mathbb Q}_\alpha \; | \; \alpha< \gamma\rangle$ is a countable support iteration of proper forcing notions so that for all $\alpha$ we have $\forces_\alpha$ ``$\dot{\mathbb Q}_\alpha$ strongly preserves the tightness of $\check{\calE}$" then $\P_\gamma$ strongly preserves the tightness of $\calE$.
\label{preservation}
\end{theorem}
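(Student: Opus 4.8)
The plan is to prove this by induction on $\gamma$, following the standard template for preservation theorems for countable support iterations (as in the preservation of tight MAD families in \cite{restrictedMADfamilies}). The three cases are $\gamma = 0$ (trivial), $\gamma$ a successor, and $\gamma$ a limit. For the successor case $\gamma = \beta + 1$: given $M \prec H_\theta$ countable containing all the relevant parameters, a condition $p \in \P_\gamma$, and $g \in \calE$ strongly diagonalizing every element of $M \cap \mathcal I_T(\calE)^+$, I would first use the induction hypothesis to obtain an $(M, \P_\beta, \calE, g)$-generic $q_\beta \leq p \restriction \beta$. Then, working below $q_\beta$, I would pass to the generic extension by $\P_\beta$ and invoke the hypothesis that $\forces_\beta$ ``$\dot{\mathbb Q}_\beta$ strongly preserves tightness of $\check\calE$'': since $q_\beta$ forces $g$ to densely diagonalize every element of $M[\dot G_\beta] \cap \mathcal I_T(\calE)^+$, and $M[\dot G_\beta] \prec H_\theta^{V[\dot G_\beta]}$, one gets a name for an $(M[\dot G_\beta], \dot{\mathbb Q}_\beta, \check\calE, \check g)$-generic condition extending $p(\beta)$; appending it to $q_\beta$ yields the desired $q \in \P_\gamma$.

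The limit case is where the real work is, and will be the main obstacle. Here I would follow the classical fusion argument for countable support iterations of proper forcing. Fix $M \prec H_\theta$ countable, let $\langle \gamma_n \mid n < \omega\rangle$ be a cofinal (if $\cf(\gamma) = \omega$) or constant-then-stabilizing sequence in $M \cap \gamma$ with supremum $\sup(M \cap \gamma)$, and let $\langle D_n \mid n < \omega \rangle$ enumerate the dense subsets of $\P_\gamma$ lying in $M$, together with the $\P_\gamma$-names in $M$ for elements of $\mathcal I_T(\calE)^+$ (so that each name is considered cofinally often). One builds a fusion sequence $p = p_0 \geq p_1 \geq p_2 \geq \cdots$ with $p_{n+1} \restriction \gamma_n = p_n \restriction \gamma_n$, where at stage $n$ one uses the induction hypothesis at $\gamma_n$ (to get genericity below $\gamma_n$) together with properness above $\gamma_n$ to (i) meet $D_n$, and (ii) handle the $n$-th name $\dot T$ for an element of $\mathcal I_T(\calE)^+$ by arranging, for a prescribed node $t$, that some extension forces a node $s \supsetneq t$ of $\dot T$ with $s(l) = g(l)$ for some $l \in \mathrm{dom}(s) \setminus \mathrm{dom}(t)$ — this is exactly ``dense diagonalization'' being forced piecemeal. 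The key point enabling (ii) is that the relevant outer-hull tree computed inside $M$ (the tree of all $t$ forced by some condition below the current approximation into $\dot T$, restricted to $\P_{\gamma_n}$-decided information) remains in $\mathcal I_T(\calE)^+$ by elementarity and the induction hypothesis, so $g$ — which densely diagonalizes it — supplies the needed node. Then $q := \bigcup_n p_n$ (the fusion) is a condition, is $(M, \P_\gamma)$-generic by the standard argument, and forces $g$ to densely diagonalize every element of $M[\dot G] \cap \mathcal I_T(\calE)^+$, since every such element has a name in $M$ that was addressed cofinally often.

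The delicate technical points I anticipate are: first, bookkeeping to ensure that for each name $\dot T \in M$ for an element of $\mathcal I_T(\calE)^+$ and each node $t$ of (a potential value of) $\dot T$, the diagonalization requirement gets addressed — this requires care because whether $\dot T$ denotes an element of $\mathcal I_T(\calE)^+$ is not decided by a single condition, so one must work with the ``outer hull tree'' relative to each condition in the fusion and argue, as in Case 2 of the proof of Theorem~\ref{cohenindestructible}, that either some condition forces $\dot T \in \mathcal I_T(\calE)$ (in which case that requirement is vacuous below that condition) or the outer hull tree is in $\mathcal I_T(\calE)^+$ and $g$ diagonalizes it. Second, one must verify that taking the outer hull \emph{relative to} $\P_{\gamma_n}$-generic data (rather than relative to the full $\P_\gamma$) still produces a tree in $\mathcal I_T(\calE)^+$; this is where the inductive hypothesis at $\gamma_n$ is used, combined with the fact that the tail forcing $\P_\gamma / \P_{\gamma_n}$ is (forced to be) proper, so it cannot cover the relevant branching by finitely many ground-model functions. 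Finally, a bit of care is needed at the limit of uncountable cofinality, where $\sup(M \cap \gamma) < \gamma$ and the iteration is ``eventually constant on $M$'', reducing essentially to the successor/shorter case; I would handle this by noting $M \cap \P_\gamma = M \cap \P_{\sup(M\cap\gamma)}$ and applying the induction hypothesis there, then extending trivially on the remaining coordinates.
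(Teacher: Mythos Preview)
Your proposal is correct and follows essentially the same approach as the paper: induction on $\gamma$, the successor step handled by composing the genericity data for $\P_\beta$ and $\dot{\Q}_\beta$, and the limit step by a fusion argument in which each stage uses the outer-hull tree $W$ (the tree of nodes not forced out of $\dot{T}$ by the current approximation) together with the fact that $g$ densely diagonalizes it. The one point worth noting is that the paper packages the induction hypothesis as a slightly stronger technical statement (Lemma~\ref{limitstep}): rather than just ``$\P_{\gamma_n}$ strongly preserves tightness'', it asserts that any $(M,\P_{\alpha},\calE,g)$-generic condition $p \in \P_\alpha$ together with a $\P_\alpha$-name $\dot q$ for a condition in $\P_\gamma \cap M$ extending it can be completed to an $(M,\P_\gamma,\calE,g)$-generic $\bar p$ with $\bar p \hook \alpha = p$ and $\bar p \forces \dot q \in \dot G$ --- this is exactly what lets the fusion sequence grow coordinatewise through the $\gamma_n$'s while simultaneously pushing the tail condition down through the $D_n$'s, and is the precise form of ``induction hypothesis at $\gamma_n$ together with properness above $\gamma_n$'' that you gesture at.
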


This theorem follows immediately from the following two lemmas. From now on fix a tight eventually different family $\calE$.

\begin{lemma}
Suppose $\P$ strongly preserves the tightness of $\calE$ and $\dot{\mathbb Q}$ is a $\P$-name for a poset which strongly preserves the tightness of $\calE$. Then $\P * \dot{\mathbb Q}$ strongly preserves the tightness of $\calE$. Moreover if $p$ is $(M, \P, \calE, g)$-generic and forces $\dot{q}$ to be $(M[\dot{G}], \dot{\Q}, \calE, g)$-generic then $(p, \dot{q})$ is $(M, \P * \dot{\mathbb Q}, \calE, g)$-generic.
\label{successorstep}
\end{lemma}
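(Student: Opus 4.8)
The plan is to prove the two-step (successor) case by the standard two-step iteration argument adapted to the notion of strong diagonalization. Fix a sufficiently large $\theta$, a countable $M \prec H_\theta$ with $\P, \dot{\mathbb Q}, \calE, p \in M$ (where $p \in \P * \dot{\mathbb Q}$), and fix $g \in \calE$ which densely (= ``strongly'') diagonalizes every element of $M \cap \mathcal I_T(\calE)^+$. Write $p = (p_0, \dot p_1)$. Since $\P$ strongly preserves the tightness of $\calE$ and $p_0, \P, \calE \in M$, there is an $(M, \P, \calE, g)$-generic condition $q_0 \leq p_0$. The key observation is that, by elementarity and the fact that $\dot{\mathbb Q} \in M$ is forced to strongly preserve tightness, $q_0$ forces the following: $M[\dot G_0] \prec H_\theta^{V[\dot G_0]}$ is countable, contains $\dot{\mathbb Q}^{\dot G_0}, \calE, \dot p_1^{\dot G_0}$, and (this is the crucial point) $g$ still densely diagonalizes every element of $M[\dot G_0] \cap \mathcal I_T(\calE)^+$ — precisely because $q_0$ is $(M,\P,\calE,g)$-generic, which by Definition~\ref{strongpres} says exactly that $q_0$ forces $g$ to densely diagonalize every element of $M[\dot G_0] \cap \mathcal I_T(\calE)^+$. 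Hence in $V[\dot G_0]$ the hypothesis of strong preservation for $\dot{\mathbb Q}^{\dot G_0}$ is met with the model $M[\dot G_0]$, the condition $\dot p_1^{\dot G_0}$, and the same $g$.

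Next, working below $q_0$, use the name for the strong-preservation property of $\dot{\mathbb Q}$ to find a $\P$-name $\dot q_1$ such that $q_0 \forces$ ``$\dot q_1 \leq \dot p_1$ is $(M[\dot G_0], \dot{\mathbb Q}, \calE, g)$-generic''. Then set $q = (q_0, \dot q_1) \leq (p_0, \dot p_1) = p$. It remains to check that $q$ is $(M, \P * \dot{\mathbb Q})$-generic and that $q$ forces $g$ to densely diagonalize every element of $M[\dot G] \cap \mathcal I_T(\calE)^+$, where $\dot G$ is the generic for $\P * \dot{\mathbb Q}$. The genericity of $q$ for $M$ is the classical two-step composition fact (if $q_0$ is $(M,\P)$-generic and $q_0$ forces $\dot q_1$ to be $(M[\dot G_0], \dot{\mathbb Q})$-generic then $(q_0, \dot q_1)$ is $(M, \P * \dot{\mathbb Q})$-generic), and I would invoke it directly. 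For the diagonalization clause: in $V[\dot G] = V[\dot G_0][\dot G_1]$ we have $M[\dot G] = M[\dot G_0][\dot G_1]$, and $\dot q_1$ being $(M[\dot G_0], \dot{\mathbb Q}, \calE, g)$-generic means exactly that it forces $g$ to densely diagonalize every tree in $M[\dot G_0][\dot G_1] \cap \mathcal I_T(\calE)^+$. So the last sentence of the lemma is immediate from the way $q$ was built, and the ``moreover'' clause is literally the construction.

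The main obstacle — really the only place requiring care — is the transfer step in the first paragraph: checking that an $(M, \P, \calE, g)$-generic condition $q_0$ genuinely makes the strong-preservation hypothesis for $\dot{\mathbb Q}$ available inside $V[\dot G_0]$ with the \emph{same} $g$. This needs two things: first, that the defining clause of $(M,\P,\calE,g)$-genericity is precisely ``$q_0$ forces $g$ to densely diagonalize every element of $M[\dot G_0] \cap \mathcal I_T(\calE)^+$'' (which is how Definition~\ref{strongpres} is phrased), so there is no gap; and second, that $\mathcal I_T(\calE)^+$ is computed the same way in $V$ and $V[\dot G_0]$ for the trees that lie in $M[\dot G_0]$ — i.e. membership of a ground-model-coded tree in $\mathcal I_T(\calE)^+$ is absolute, which follows because ``$\bigcup T_t$ is almost covered by finitely many functions from $\calE$'' is an arithmetic statement about $T$, $t$, and finitely many elements of $\calE$, and $\calE$ itself is fixed in the ground model. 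Once these absoluteness points are noted the rest is bookkeeping, so I would state them explicitly and then assemble $q = (q_0, \dot q_1)$ as above.

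\begin{proof}
Fix a sufficiently large regular $\theta$, a countable $M \prec H_\theta$, a condition $p = (p_0, \dot p_1) \in \P * \dot{\mathbb Q}$ with $\P, \dot{\mathbb Q}, \calE, p \in M$, and $g \in \calE$ densely diagonalizing every element of $M \cap \mathcal I_T(\calE)^+$. Since $\P$ strongly preserves the tightness of $\calE$ and $p_0, \P, \calE \in M$, pick an $(M, \P, \calE, g)$-generic $q_0 \leq p_0$. By Definition~\ref{strongpres}, $q_0$ is $(M, \P)$-generic and $q_0 \forces$ ``$\check g$ densely diagonalizes every element of $M[\dot G_0] \cap \mathcal I_T(\calE)\check{}^{\,+}$''.

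Now work below $q_0$ in $V[\dot G_0]$. By elementarity $M[\dot G_0] \prec H_\theta^{V[\dot G_0]}$ is countable and contains $\dot{\mathbb Q}^{\dot G_0}$, $\calE$, and $\dot p_1^{\dot G_0}$. Since membership of a tree $T$ in $\mathcal I_T(\calE)^+$ only asserts, for each node $t \in T$, that no finite subset of the ground-model family $\calE$ almost covers $\bigcup T_t$ — an absolute statement — the family $\mathcal I_T(\calE)^+$ is computed the same way in $V$ and $V[\dot G_0]$, and $g$ densely diagonalizes every element of $M[\dot G_0] \cap \mathcal I_T(\calE)^+$ in $V[\dot G_0]$ by the previous paragraph. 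As $\forces_\P$ ``$\dot{\mathbb Q}$ strongly preserves the tightness of $\calE$'', we may apply the definition of strong preservation in $V[\dot G_0]$ with the model $M[\dot G_0]$, the condition $\dot p_1^{\dot G_0}$, and the same $g$: there is an $(M[\dot G_0], \dot{\mathbb Q}^{\dot G_0}, \calE, g)$-generic $q_1 \leq \dot p_1^{\dot G_0}$. Back in $V$, let $\dot q_1$ be a $\P$-name with $q_0 \forces$ ``$\dot q_1 \leq \dot p_1$ is $(M[\dot G_0], \dot{\mathbb Q}, \calE, \check g)$-generic''.

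Set $q = (q_0, \dot q_1)$. Then $q \leq (p_0, \dot p_1) = p$. Since $q_0$ is $(M, \P)$-generic and $q_0$ forces $\dot q_1$ to be $(M[\dot G_0], \dot{\mathbb Q})$-generic, the standard two-step composition fact gives that $q$ is $(M, \P * \dot{\mathbb Q})$-generic. Finally, writing $\dot G$ for the $\P * \dot{\mathbb Q}$-generic and $\dot G_0, \dot G_1$ for the induced generics, we have $M[\dot G] = M[\dot G_0][\dot G_1]$, and by choice of $\dot q_1$ the condition $q$ forces $g$ to densely diagonalize every element of $M[\dot G_0][\dot G_1] \cap \mathcal I_T(\calE)^+ = M[\dot G] \cap \mathcal I_T(\calE)^+$. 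Thus $q$ is $(M, \P * \dot{\mathbb Q}, \calE, g)$-generic, proving strong preservation for $\P * \dot{\mathbb Q}$. The ``moreover'' clause is exactly the construction just carried out.
\end{proof}
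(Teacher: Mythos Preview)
Your proof is correct and follows essentially the same approach as the paper. The paper's argument is terser: it only proves the ``moreover'' clause directly (assuming $p$ and $\dot q$ are already given with the stated properties) and leaves implicit that the first clause follows by first invoking strong preservation of $\P$ to produce $q_0$ and then strong preservation of $\dot{\mathbb Q}$ in $V[\dot G_0]$ to produce $\dot q_1$; you spell out this construction explicitly and also make the absoluteness of $\mathcal I_T(\calE)^+$ explicit, which the paper does not mention.
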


\begin{proof}
Suppose $p$ is $(M, \P, \calE, g)$-generic and forces $\dot{q}$ to be $(M[\dot{G}], \dot{\Q}, \calE, g)$-generic. Then obviously $(p, \dot{q})$ is $(M, \P * \dot{\mathbb Q})$-generic. Moreover, by definition it forces that for every $\P$-name for a $\dot{\mathbb Q}$-name for an element of $\mathcal I_T(\calE)^+$ in $M$ $g$ densely diagonalizes it. But therefore $(p, \dot{q})$ forces that for every $\P * \dot{\mathbb Q}$-name for an element of $\mathcal I_T(\calE)^+$ in $M$ is densely diagonalized by $g$ as needed.
\end{proof}

\begin{lemma}
Let $\langle \P_\alpha, \dot{\mathbb Q}_\alpha \; | \; \alpha< \gamma\rangle$ be a countable support iteration of proper forcing notions so that for all $\alpha$ we have $\forces_\alpha$ ``$\dot{\mathbb Q}_\alpha$ strongly preserves the tightness of $\check{\calE}$", let $\theta$ be sufficiently large, $M \prec H_\theta$ countable containing $\P_\gamma, \gamma, \calE$. For each $\alpha \in M \cap \gamma$ and every $(M, \P_\alpha, \calE, g)$-generic condition $p \in \P_\alpha$ the following holds:

If $\dot{q}$ is a $\P_\alpha$-name $p\forces_\alpha \dot{q} \in \P_\gamma \cap M$ and $p\forces_\alpha \dot{q}\hook \alpha \in \dot{G}_\alpha$ then there is an $(M, \P_\gamma, \calE, g)$-generic condition $\bar{p} \in \P_\gamma$ so that $\bar{p} \hook \alpha = p$ and $\bar{p} \forces_\gamma \dot{q} \in \dot{G}$. 
\label{limitstep}

\end{lemma}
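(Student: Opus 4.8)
The plan is to prove this by induction on $\alpha \in M \cap \gamma$, following the standard pattern for preservation theorems for countable support iterations (as in Shelah's proper forcing machinery and the treatment in \cite{restrictedMADfamilies}), but carrying the extra bookkeeping needed to track the single diagonalizing function $g$. The statement is set up precisely so that the inductive hypothesis at stage $\alpha$ is exactly what is needed to run the successor and limit steps: given an $(M,\P_\alpha,\calE,g)$-generic $p$ and a $\P_\alpha$-name $\dot q$ for a condition in $\P_\gamma \cap M$ with $p \forces_\alpha \dot q \hook \alpha \in \dot G_\alpha$, we must extend $p$ to an $(M,\P_\gamma,\calE,g)$-generic $\bar p$ with $\bar p \hook \alpha = p$ and $\bar p \forces_\gamma \dot q \in \dot G$. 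The base case $\alpha = 0$ (or more precisely the case where there is nothing below $\alpha$ in $M$) is vacuous or immediate. One first reduces, by elementarity of $M$, to enumerating in $M$ the countably many dense sets / names for elements of $\mathcal I_T(\calE)^+$ that must be handled, and fixing an increasing cofinal sequence $\langle \alpha_n \mid n < \omega\rangle$ in $M \cap \gamma$ with $\alpha_0 = \alpha$ (if $\gamma \in M$ and $\cf(\gamma) > \omega$ one instead works below some $\beta \in M \cap \gamma$ bounding the relevant supports; the genuinely new content is at countable cofinality / the $\omega$-cofinal sequence case, which is handled below).

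The successor step of the induction is essentially Lemma \ref{successorstep}: if we have built $p_n$ that is $(M,\P_{\alpha_n},\calE,g)$-generic and $\P_{\alpha_n}$ decides a $\dot{\mathbb Q}_{\alpha_n}$-name appropriately, then since $\forces_{\alpha_n}$ ``$\dot{\mathbb Q}_{\alpha_n}$ strongly preserves the tightness of $\check\calE$'' and $g$ densely diagonalizes every element of $M \cap \mathcal I_T(\calE)^+$ (hence, by the genericity of $p_n$, every element of $M[\dot G_{\alpha_n}] \cap \mathcal I_T(\calE)^+$ in the generic extension), we may find inside $M[\dot G_{\alpha_n}]$ an $(M[\dot G_{\alpha_n}], \dot{\mathbb Q}_{\alpha_n}, \calE, g)$-generic extension of the relevant condition, and glue to get $p_{n+1}$. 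At each step we also arrange $p_{n+1} \forces \dot q \hook \alpha_{n+1} \in \dot G_{\alpha_{n+1}}$, using that $\dot q \in M$ so $\dot q \hook \alpha_{n+1}$ is (forced to be) in $M[\dot G_{\alpha_{n+1}}]$, and we meet the $n$-th dense set on our list. The limit step: at a limit ordinal $\delta = \alpha_n$ of the bookkeeping we simply take unions of the $p_m \hook \delta$ for $m < n$; the point is that the resulting condition has support in $M$ and remains $(M,\P_\delta)$-generic and diagonalizing because each name for an element of $\mathcal I_T(\calE)^+$ appearing in $M[\dot G_\delta]$ already appears, with a witness, at some earlier finite stage.

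Finally, $\bar p := \bigcup_n p_n$ (with domain the support, which lies in $M \cap \gamma$, and value $\bar p(\beta) = p_n(\beta)$ for the least $n$ with $\beta < \alpha_n$) is the desired condition: $\bar p \hook \alpha = p_0 = p$; $\bar p \forces_\gamma \dot q \in \dot G$ because $\bar p \hook \alpha_n$ forces $\dot q \hook \alpha_n \in \dot G_{\alpha_n}$ for every $n$ and the $\alpha_n$ are cofinal in $\operatorname{dom}(\dot q) \cup M \cap \gamma$; $\bar p$ is $(M,\P_\gamma)$-generic because every dense subset of $\P_\gamma$ in $M$ was met along the way; and $\bar p$ forces that $g$ densely diagonalizes every element of $M[\dot G] \cap \mathcal I_T(\calE)^+$ because any $\P_\gamma$-name for such a tree in $M$ is (by properness and the fact that $M[\dot G] \cap V[\dot G_{\alpha_n}]$ exhausts $M[\dot G]$) essentially a $\P_{\alpha_n}$-name for some $n$, and was diagonalized by $g$ at stage $n+1$. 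The main obstacle, as always in this kind of argument, is the bookkeeping at limits of uncountable cofinality below $\gamma$ and making sure that ``$g$ densely diagonalizes every element of $M[\dot G] \cap \mathcal I_T(\calE)^+$'' can be reduced to a condition about $\P_{\alpha_n}$-names for finitely/countably many specific trees — i.e. that the property of being in $\mathcal I_T(\calE)^+$ is sufficiently absolute and that a name in $M$ for such a tree is captured at a bounded stage; this is where the precise formulation of dense diagonalization and the structure of $\mathcal I_T(\calE)$ (Definition \ref{def.treeideal}) do the work, exactly paralleling the outer-hull argument used in Case 1 of the proof of Theorem \ref{cohenindestructible}.
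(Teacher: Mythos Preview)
Your overall architecture is right: induction on $\gamma$, a cofinal sequence $\langle\alpha_n\rangle$ through $M\cap\gamma$ with $\alpha_0=\alpha$, an enumeration of dense sets and of $\P_\gamma$-names $\{\dot T_n\}$ for $\mathcal I_T(\calE)^+$-trees in $M$, and a fusion $\bar p=\bigcup_n p_n$. The successor step via Lemma \ref{successorstep} and the genericity bookkeeping are also correct.

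The genuine gap is in how you verify that $\bar p$ forces $g$ to densely diagonalize every tree named in $M$. You write that ``any $\P_\gamma$-name for such a tree in $M$ is \ldots\ essentially a $\P_{\alpha_n}$-name for some $n$.'' This is false: a tree is a real, and in a countable support iteration new reals are added at limit stages of countable cofinality, so a $\P_\gamma$-name $\dot T\in M$ need not be (equivalent to) a $\P_{\alpha_n}$-name for any $n$. Consequently, simply making each $p_n$ an $(M,\P_{\alpha_n},\calE,g)$-generic condition does \emph{not} by itself guarantee anything about $\dot T$; the diagonalization has to be built into the construction explicitly.

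What the paper does---and what is actually needed---is to diagonalize \emph{node by node} rather than tree by tree. Fix a pairing $\phi:\omega\to\omega^2$. At step $n$ one treats the $\phi_1(n)^{\text{th}}$ node of $\dot T_{\phi_0(n)}$: working in $V[G_{\alpha_n}]$, one first strengthens $\dot q_n$ to an $r\in D_n\cap M$ deciding this node as some $s_n$, and then forms the outer-hull tree
\[
W=\{t\supsetneq s_n : \exists\,\bar r\le r\ (\bar r\hook\alpha_n\in G_{\alpha_n}\ \wedge\ \bar r\Vdash_\gamma \check t\in\dot T_{\phi_0(n)})\}.
\]
This $W$ lies in $M[G_{\alpha_n}]\cap\mathcal I_T(\calE)^+$ (since any finite cover of $W$ by elements of $\calE$ would be forced to cover $\dot T_{\phi_0(n)}$), so by the $(M,\P_{\alpha_n},\calE,g)$-genericity of $p_n$ the function $g$ densely diagonalizes $W$. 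This yields a concrete $t\in W$ and $k$ with $g(k)=t(k)$, and a witnessing $\bar r$ which becomes $\dot q_{n+1}$. Thus the outer-hull idea is used at each finite stage to reduce a single \emph{node} of a $\P_\gamma$-named tree to a ground-stage tree in $M[G_{\alpha_n}]$, not to capture the whole tree there. Your final paragraph gestures at this, but the body of the argument does not implement it and instead relies on the incorrect capturing claim.
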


Before proving this lemma, let us fix ahead of time a convention regarding the enumeration of trees $T \subseteq \omega^{<\omega}$. First fix a computable enumeration of $\omega^{<\omega}$ so that shorter sequences appear first. Now, given a tree $T \subseteq \omega^{<\omega}$ we can push that enumeration forward onto $T$ in the sense that the $i^{\rm th}$ node of $T$ is the $i^{\rm th}$ node in $T$ relative to the computable enumeration of $\omega^{<\omega}$ we fixed. In other words if our computable enumeration of $\omega^{<\omega}$ is, say $\{t_k \; | \; k < \omega\}$ then the $0^{\rm th}$ node of $T$ is $t_i$ so that $i$ is least with $t_i \in T$ and so on. In this way we can refer unambiguously to an enumeration of the nodes of a tree in a forcing extension. This convention will also be used in the remaining sections of the paper as well.


\begin{proof}
The proof is by induction on $\gamma$. The case where $\gamma$ is a successor ordinal follows from Lemma \ref{successorstep} so we focus on the limit case. Fix $\alpha, p, M$ etc as in the statement of the lemma and let $\{\alpha_n \; | \; n  <\omega\}$ be a strictly increasing sequence in $M\cap \gamma$ with supremum $\gamma$ so that $\alpha_0 = \alpha$. Fix a bijection $\phi:\omega \to \omega^2$ with coordinate functions $\phi_0$ and $\phi_1$. Enumerate the dense open subsets of $\mathbb P_\gamma$ in $M$ as $\{D_n\; | \; n < \omega\}$. Enumerate the $\P_\gamma$-names for $\mathcal I_T(\calE)^+$ trees in $M$ as $\{\dot{T}_n \; | \; n < \omega\}$. We will recursively define sequences $\{p_n \; | \; n < \omega\}$, $\{\dot{q}_n \; | \; n < \omega\}$, $\{\dot{t}_n \; | \; n < \omega \}$ and $\{\dot{k}_n \; | \; n < \omega \}$ as follows.
\begin{enumerate}
\item
$p_0 = p$ and $\dot{q}_0 = \dot{q}$
\item
$p_n$ is an $(M, \P_{\alpha_n}, \calE, g)$-generic condition
\item
$p_{n+1} \hook \alpha_n = p_n$
\item
$\dot{q}_n$ is a $\P_{\alpha_n}$ name so that $p_n \forces_{\alpha_n} \dot{q}_n \in \P_\gamma \cap M$ and $p_n \forces_{\alpha_n} \dot{q}_n \hook \alpha_n \in \dot{G}_{\alpha_n}$
\item
$p_{n+1} \forces_{\alpha_{n+1}} \dot{q}_{n+1} \leq \dot{q}_n$ and $p_{n+1} \forces_{\alpha_{n+1}} \dot{q}_{n+1} \in D_n$
\item
$\dot{t}_n$ is a $\P_{\gamma}$ name for a node in $\dot{T}_{\phi_0(n)}$ strictly above the $\phi_1(n)^{\rm th}$ node in $\dot{T}_{\phi_0(n)}$
\item
$\dot{k}_n$ is a name for an element of $\omega$ forced to be in the domain of $\dot{t}_n$ above the domain of the $\phi_1(n)^{\rm th}$ node in $\dot{T}_{\phi_0(n)}$ and $p_n \forces_{\alpha_n}$``$\dot{q}_n \forces_\gamma \check{g}(\dot{k}_n) = \dot{t}_n(\dot{k}_n)$"
\end{enumerate}

Assuming that such a sequence can be constructed we let $\bar{p} = \bigcup_{n < \omega} p_n$. Clearly this condition is as required. Therefore it remains to see that we can construct such a quadruple of sequences. This is done by induction. The base case is given. Suppose that we have constructed for some $n < \omega$ sequences $\{p_m \; | \; m < n+1\}$, $\{\dot{q}_m \; | \; m < n+1\}$, $\{\dot{t}_m \; | \; m < n+1 \}$ and $\{\dot{k}_m \; | \; m < n+1 \}$ satisfying the above conditions and we construct $p_{n+1}$, $\dot{q}_{n+1}$, $\dot{t}_{n+1}$ and $\dot{k}_{n+1}$. 

Let $p_n \in G_{\alpha_n}$ be generic and work in $V[G_{\alpha_n}]$. Let $q_n \in \P_\gamma \cap M$ be the evaluation of the name $\dot{q}_n$ by $G_{\alpha_n}$ so that $q_n \hook \alpha_n \in G_{\alpha_n}$. Since $p_n$ is $(M, \P_{\alpha_n})$ generic there is an $r \in D_n \cap M$ so that $r\leq q_n$ and $r \hook \alpha_n \in G_{\alpha_n}$. Without loss we may assume that $r$ decides that the $\phi_1(n)^{\rm th}$ node in $\dot{T}_{\phi_0(n)}$ to be some $s_n \in \omega^{<\omega}$. Now let $W$ be the set of all $t \in {\omega}^{<\omega}$ so that there is an $\bar{r} \leq r$ so that $\bar{r} \in \P_\gamma$ and the following hold.
\begin{enumerate}
\item
$\bar{r} \hook \alpha_n \in G_{\alpha_n}$
\item
$\bar{r} \forces_\gamma$ ``$\check{t} \in \dot{T}_{\phi_0(n)}$ and $\check{t} \supsetneq \check{s}_n$"
\end{enumerate}

Observe that, by appending $s_n$ and its predecessors to $W$ we have a tree with stem (including) $s_n$. This is because if some $\bar{r}$ forces $t \in \dot{T}_{\phi_0(n)}$ to be strictly above $s_n$ and $s_n \supsetneq t' \supsetneq t$ then trivially the same $\bar{r}$ forces the same of $t'$ so $W$ is closed downwards under sequences extending $s_n$. Moreover $W \in M[G_\alpha]$ by construction (since $\dot{T}_{\phi_0(n)}$ and $r$ are) and, crucially $W \in \mathcal I_T(\calE)^+$. To see this last point observe that if $r \in G$ is $\P_\gamma$ generic then, in $V[G]$ we must have that the evaluation of $\dot{T}_{\phi_0(n)}$ is contained in $W$ so if $W$ could be covered by finitely many functions from $\calE$ then so could $\dot{T}_{\phi_0(n)}$ contradicting the fact that it was forced to be a tree in $\mathcal I_T(\calE)^+$. 

We can therefore apply assumption (2) for $p_n$ and conclude that $g$ densely diagonalizes $W$. It follows that there is a node $t\in W$ so that $t \supsetneq s_n$ and there is a $k \in {\rm dom}(t) \setminus {\rm dom}(s_n)$ so that $g(k) = t(k)$. Let $\dot{q}_{n+1}$ be a name for the $\bar{r}$ witnessing that $t \in W$ and let $\dot{t}_n$ and $\dot{k}_n$ be names for $t$ and $k$ back in $V$. Finally, by our inductive hypothesis we can find a $(M, \P_{\alpha_{n+1}}, \calE, g)$-generic condition $p_{n+1} \leq p_n$ as needed.
\end{proof}

Sacks forcing strongly preserves the tightness of any tight eventually different family. This gives an alternative proof of the fact that $\mfa_e = \aleph_1$ in the Sacks model. The argument for this is very similar to the analogous one we give for Miller forcing in Section 5 so we leave the details of this to the reader. In Sections 6-8 we also show that partition forcing, infinitely often equal forcing and Shelah's $\mathbb Q_\mathcal I$ forcing all strongly preserves the tightness of any given tight eventually different family. Before giving these arguments we turn to a slightly different type of eventually different family.
 
\section{Tight Eventually Different Permutations}
The foregoing discussion of tight eventually different families works mutatis mutandis for {\em eventually different sets of permutations} of $\omega$. Recall that $\mfa_p$ is the least size of a maximal eventually different family of permutations. It remains an intriguing open question if $\ZFC$ proves $\mfa_e = \mfa_p$. Let us give the ``permutation" version of the above definitions and state the analogous results. Since the proofs are almost verbatim the same we simply indicate the necessary changes and leave the details to the reader.

Call a tree $T \subseteq \omega^{<\omega}$ {\em injective} if each $t \in T$ is injective as a finite function. Let $\calP$ be a family of eventually different permutations.
\begin{definition}
The {\em tree ideal generated by} $\calP$, denoted $\mathcal I_T(\calP)$, is the set of all {\bf injective} trees $T\subseteq \omega^{<\omega}$ so that there is a $t \in T$ and a finite set $X \in [\calE]^{<\omega}$ so that $\bigcup T_t \subseteq^* \bigcup X$.

Dually an injective tree $T \subseteq \omega^{<\omega}$ is in $\mathcal I_T(\calP)^+$ if for each $t \in T$ it's not the case that $\bigcup T_t$ can be almost covered by finitely many functions from $\calP$.
\end{definition}

The definition of tightness for sets of permutations is now identical to that of functions.
\begin{definition}
An eventually different set of permutations $\calP$ is {\em tight} if given any sequence of countably many injective trees $\{T_n \; | \; n < \omega \}$ so that $T_n \in \mathcal I_T(\calP)^+$ for all $n < \omega$ there is a single $g \in \mathcal P$ which densely diagonalizes all the $T_n$'s.
\end{definition}

The same proof as before shows that:
\begin{proposition}\label{if.tight.then.maximal}
If $\calP$ is tight then it is maximal.
\end{proposition}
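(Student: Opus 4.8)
The plan is to mimic the proof of Proposition~\ref{if.tight.then.max} essentially verbatim; the only new ingredient is a routine check that the witnessing tree is \emph{injective}, so that it qualifies as a member of $\mathcal I_T(\calP)^+$ in the sense of the permutation-adapted definition.

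First I would suppose toward a contradiction that $\calP$ is tight but not maximal, and fix a permutation $h$ of $\omega$ with $h \notin \calP$ which is eventually different from every element of $\calP$. Then I would form the single-branch tree $T_h = \{h \hook n \; | \; n < \omega\}$. The point specific to the permutation setting is that, since $h$ is a bijection of $\omega$, every restriction $h \hook n$ is injective as a finite function, so $T_h$ is an injective tree; hence it is a legitimate candidate for membership in $\mathcal I_T(\calP)^+$.

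Next I would verify that $T_h \in \mathcal I_T(\calP)^+$. Since $T_h$ is linearly ordered by $\subseteq$, for every node $t = h \hook n$ we have $(T_h)_t = T_h$ and $\bigcup (T_h)_t$ is the graph of $h$. If this were almost covered by finitely many $f_0, \dots, f_{k-1} \in \calP$, then for all but finitely many $m$ we would have $h(m) \in \{f_0(m), \dots, f_{k-1}(m)\}$, and by the pigeonhole principle some fixed $f_i$ would agree with $h$ on infinitely many $m$, i.e.\ $f_i =^\infty h$, contradicting $h \neq^* f_i$. So no node of $T_h$ is almost covered by finitely many elements of $\calP$, whence $T_h \in \mathcal I_T(\calP)^+$.

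Finally I would apply tightness to the constant sequence $T_n = T_h$ to obtain a $g \in \calP$ densely diagonalizing $T_h$; since $T_h$ consists only of initial segments of $h$, for each $n$ there is some $k \ge n$ with $g(k) = h(k)$, so $g =^\infty h$, contradicting that $h$ is eventually different from $g \in \calP$. I do not expect any genuine obstacle: the argument is structurally identical to the function case, and the only thing requiring attention is recording that $T_h$ is injective so that it meets the (slightly) modified definition of $\mathcal I_T(\calP)^+$.
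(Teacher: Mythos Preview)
Your proposal is correct and follows exactly the approach the paper intends: the paper does not even write out a separate proof for this proposition, stating only that ``the same proof as before shows'' it, referring back to Proposition~\ref{if.tight.then.max}. You have reproduced that argument and correctly isolated the one extra verification needed in the permutation setting, namely that $T_h$ is an injective tree (immediate since $h$ is a bijection), so that $T_h$ is eligible for membership in $\mathcal I_T(\calP)^+$.
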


We also have the analogue of Theorem \ref{construction}
\begin{theorem}\label{MA.tight}
Assume $\MA(\sigma{\rm -centered})$. Every eventually different family $\calP_0$ of permutations of size ${<}2^{\aleph_0}$ is contained in a tight eventually different set of permutations. In particular $\CH$ implies that tight eventually different sets of permutations exist.
\end{theorem}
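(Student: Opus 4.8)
The plan is to mimic the proof of Theorem \ref{construction} almost verbatim, with the forcing notion $\P_\calP$ now defined as in that theorem but restricted to \emph{injective} first coordinates: conditions are pairs $(s,E)$ with $s\in\omega^{<\omega}$ injective and $E\in[\calP]^{<\omega}$, ordered by $(s_1,E_1)\le(s_0,E_0)$ iff $s_1\supseteq s_0$, $E_1\supseteq E_0$, and for each $k\in{\rm dom}(s_1)\setminus{\rm dom}(s_0)$ we have $s_1(k)\ne f(k)$ for all $f\in E_0$. Exactly as before this poset is $\sigma$-centered, since two conditions with the same (injective) first coordinate are strengthened by taking the union of their second coordinates. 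The generic real $g_{\calP}=\bigcup\{s:(s,E)\in G\}$ will be a permutation of $\omega$ (not merely an injective function), eventually different from every element of $\calP$, and densely diagonalizing every injective $T\in\mathcal I_T(\calP)^+$ from the ground model.

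First I would prove the analogue of the density lemma. The three families of dense sets are: (1) for each $k<\omega$, the conditions $(s,E)$ with $k\in{\rm dom}(s)$; (1$'$) for each $k<\omega$, the conditions $(s,E)$ with $k\in{\rm ran}(s)$ --- this new clause is what forces the generic to be \emph{onto}; (2) for each injective $T\in\mathcal I_T(\calP)^+$ and $t\in T$, the conditions $(s,E)$ for which some $t'\in T$ extends $t$ with a $k\in{\rm dom}(t')\setminus{\rm dom}(t)$ satisfying $s(k)=t'(k)$; and (3) for each $f\in\calP$, the conditions $(s,E)$ with $f\in E$. Density of (1), (2), (3) is handled exactly as in the Lemma preceding Theorem \ref{construction}, with the single observation that at every step we extend $s$ by a value chosen \emph{outside} ${\rm ran}(s)$ as well as outside the finite exceptional set $\{f(k):f\in E\}$, so injectivity is maintained; this is always possible because those two sets are finite. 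For (1$'$): given $(s,E)$ and $k\notin{\rm ran}(s)$, pick the least $j\notin{\rm dom}(s)$ and set $s'=s\cup\{(j',j):j'\le j, j'\notin{\rm dom}(s)\}\cup\dots$ --- more simply, repeatedly apply density of (1) at coordinates $0,1,\dots$ until $k$ has been placed into the range, at each stage being free to assign values since only finitely many constraints are active; one should note that at the stage where we are forced to decide the value at some coordinate we may simply choose it to be $k$ if $k$ is still available, and $k$ remains available as long as it has not yet been used, so after finitely many steps $k\in{\rm ran}(s)$.

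Given the density lemma, the proof of the theorem itself runs exactly as the proof of Theorem \ref{construction}: under $\MA(\sigma{\rm -centered})$, enumerate all $\omega$-sequences of injective subtrees of $\omega^{<\omega}$ as $\{\vec T_\alpha:\alpha<2^{\aleph_0}\}$, start from $\calP_0$, and build a continuous $\subseteq$-increasing chain $\langle\calP_\alpha:\alpha<2^{\aleph_0}\rangle$ of eventually different sets of permutations of size ${<}2^{\aleph_0}$ so that, if $\vec T_\alpha\subseteq\mathcal I_T(\calP_\alpha)^+$, then applying $\MA(\sigma{\rm -centered})$ to $\P_{\calP_\alpha}$ (meeting only $|\calP_\alpha|+\aleph_0$ dense sets: the ones from (1), (1$'$), (2) and (3)) yields a permutation $g_\alpha$ eventually different from every element of $\calP_\alpha$ and densely diagonalizing every $T_\alpha(n)$; if $\vec T_\alpha\not\subseteq\mathcal I_T(\calP_\alpha)^+$, just take any $g_\alpha$ eventually different from all of $\calP_\alpha$ (again by $\MA(\sigma{\rm -centered})$ applied to $\P_{\calP_\alpha}$ with only the dense sets (1), (1$'$), (3)). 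Set $\calP_{\alpha+1}=\calP_\alpha\cup\{g_\alpha\}$; then $\calP=\bigcup_{\alpha<2^{\aleph_0}}\calP_\alpha$ is a tight eventually different set of permutations containing $\calP_0$, and $\CH$ gives the final clause.

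The only genuinely new point, and hence the one requiring a moment's care, is verifying that the generic (equivalently, the function $g_\alpha$ obtained from finitely-many-dense-set meeting) is a \emph{surjection}, i.e.\ that the family (1$'$) is genuinely dense and that meeting it together with (1) produces a bijection rather than merely an injection with cofinite range; everything else, including the $\sigma$-centeredness and the diagonalization clauses, is literally the function case with ``injective'' inserted. I expect no real obstacle here: the key point is simply that injectivity constraints at any stage of a finite condition are finitary, so there is always room to both avoid collisions and hit a prescribed target value.
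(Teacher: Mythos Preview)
Your overall strategy matches the paper's: restrict the first coordinate of the forcing to injective finite sequences, add a surjectivity family of dense sets, and otherwise repeat the proof of Theorem~\ref{construction}. The paper lists exactly the same four families of dense sets.

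However, there is a real gap in your density argument for the diagonalization sets (2). You write that we ``extend $s$ by a value chosen outside ${\rm ran}(s)$ as well as outside the finite exceptional set $\{f(k):f\in E\}$'' and that ``this is always possible because those two sets are finite.'' That reasoning is correct for (1), (1$'$), (3), where the new value of $s$ is \emph{freely chosen}. For (2) it is not: the new value must equal $t'(k)$ for some node $t'\in T$ extending $t$, and the finiteness of ${\rm ran}(s)$ does not by itself guarantee that the tree offers any such value outside ${\rm ran}(s)$. A priori, every pair $(k,t'(k))$ with $t'(k)\notin\{f(k):f\in E\}$ could have $t'(k)\in{\rm ran}(s)$. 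This is precisely the place where the hypothesis that $T$ is \emph{injective} must be used, and you never invoke it. The paper flags this as the one genuinely new point: since each node of $T$ is injective, every value $v\in{\rm ran}(s)$ occurs at most once along any node, so one may first pass to a node $t^*\supseteq t$ beyond which no extension in $T$ takes a value in ${\rm ran}(s)$; since $T_{t^*}$ is still in $\mathcal I_T(\calP)^+$, one then finds $k>\max({\rm dom}(t^*)\cup{\rm dom}(s))$ and $t'\supseteq t^*$ with $t'(k)\notin\{f(k):f\in E\}$, and automatically $t'(k)\notin{\rm ran}(s)$.

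A minor point: your treatment of (1$'$) is more convoluted than needed. Since each $f\in E$ is a permutation, the set $\{j:f(j)=k\text{ for some }f\in E\}$ has size at most $|E|$; any $j$ outside that finite set and outside ${\rm dom}(s)$ lets you set $s'=s\cup\{(j,k)\}$ directly.
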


\begin{proof}
The proof is almost the same as that of Theorem \ref{construction}. We just indicate what the right poset is and the necessary modifications.  Let $\calP$ be an eventually different set of permutations (not necessarily maximal). Define the forcing notion $\Q_{\calP}$ to be the set of all pairs $(s, E)$ so that the following hold.
\begin{enumerate}
\item
$s$ is an injective finite partial function from $\omega$ to $\omega$
\item
$E \in [\calP]^{<\omega}$
\end{enumerate}

The order on $\Q_{\calP}$ is defined exactly the same as for $\P_{\calE}$. Mimicking the proof of Theorem \ref{construction}, it's enough to show that the following sets are dense.
\begin{enumerate}
\item
For each $k < \omega$ the set of conditions $(s, E)$ so that $k \in {\rm dom}(s)$.
\item
For each $k < \omega$ the set of conditions $(s, E)$ so that $k \in {\rm range}(s)$.
\item
For each $T \in \mathcal I_T(\mathcal P)^+$, and $t \in T$ the set of conditions $(s, E)$ so that there is an $t' \in T$ which extends $t$ and a $k \in {\rm dom}(t') \setminus {\rm dom}(t)$ so that $s(k) = t'(k)$.
\item
For each $f \in \calE$ the set of conditions $(s, E)$ so that $f \in E$.
\end{enumerate}
This is shown exactly as for eventually different families, the one caveat being that we need to be able ensure that when we strengthen some $(s, E)$ we can guarantee that if $T \in \mathcal I_T(\calP)^+$ then we can find an $t' \supsetneq t$ so that $t' \in T$ and a $k \in {\rm dom}(t') \setminus {\rm dom} (t) \cap {\rm dom}(s)$ so that we can add $(k, t'(k))$ to $s$ without wrecking injectivity. This is where we use that fact that $T$ is injective. Namely, we can find a level in $T$ above which no element of any sequence is in the range of $s$ (since $s$ is finite) and therefore we can find the needed $k$. Everything else in the proof is exactly as in the case of eventually different families of functions.
\end{proof}

We also have the same indestructibility results.
\begin{theorem}
Suppose $\kappa$ is a cardinal and $\mathbb C_\kappa = add(\omega, \kappa)$ is the forcing to add $\kappa$-many Cohen reals. If $\calP$ is a tight eventually different set of permutations then $\forces_{\mathbb C_\kappa}$ ``$\check{\calP}$ is tight".
\label{cohenindestructible2}
\end{theorem}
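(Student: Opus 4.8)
The plan is to follow the proof of Theorem~\ref{cohenindestructible} essentially verbatim, the one new point being to check that the relevant ``outer hull'' trees stay injective. As there, since every new real in $V^{\mathbb{C}_\kappa}$ is added by a single Cohen real, it suffices to treat $\mathbb{C} = \mathbb{C}_1$. Fix a tight eventually different set of permutations $\calP$, $\mathbb{C}$-names $\{\dot{T}_n \mid n < \omega\}$ for injective subtrees of $\omega^{<\omega}$, and $p \in \mathbb{C}$; enumerate the conditions below $p$ as $\{p_j \mid j < \omega\}$ and set $T_{n,j} = \{t \in \omega^{<\omega} \mid \exists q \le p_j,\ q \forces \check{t} \in \dot{T}_n\}$. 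Each $T_{n,j}$ is a subtree of $\omega^{<\omega}$ exactly as before, and crucially it is an \emph{injective} tree: if $t \in T_{n,j}$ then some condition forces $\check{t} \in \dot{T}_n$, and since $\dot{T}_n$ is forced to be injective, $t$ must be injective. This is the permutation-side analogue of the observation made in the proof of Theorem~\ref{cohenindestructible}, and it is the only genuinely new verification.

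With this in hand the dichotomy is identical to the function case. Either there are $n,j$ and $t \in T_{n,j}$ with $(T_{n,j})_t$ almost covered by finitely many members of $\calP$ --- so $T_{n,j} \in \mathcal{I}_T(\calP)$ and, unwinding the definition of the outer hull, the corresponding $p_j$ forces $\dot{T}_n \in \mathcal{I}_T(\calP)\check{}$ --- or every $T_{n,j}$ lies in $\mathcal{I}_T(\calP)^+$. In the latter case $\{T_{n,j} \mid n,j < \omega\}$ is a countable family of injective trees in $\mathcal{I}_T(\calP)^+$, so tightness of $\calP$ yields a single $g \in \calP$ densely diagonalizing all of them; the usual density argument (decide a name $\dot{t}$ for a node of $\dot{T}_n$ below any $p_m \le p$ to be some $t \in T_{n,m}$, then use dense diagonalization of $T_{n,m}$ to get an extension in $T_{n,m}$ agreeing with $g$ past $\mathrm{dom}(t)$, and read this back through the names) shows $p \forces$ ``for each $n$, $\check{g}$ densely diagonalizes $\dot{T}_n$''. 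Since $p$ was arbitrary, if $p$ forces each $\dot{T}_n \in \mathcal{I}_T(\calP)^+$ then the first alternative is impossible, and we are done.

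I do not expect any real obstacle: every combinatorial move in the function case was about trees, outer hulls, and dense diagonalization, none of which sees the difference between arbitrary functions and permutations. The only thing one must not forget is to record at the outset that the outer hull of a name for an injective tree is again an injective tree --- exactly the step that makes the tightness hypothesis for $\calP$ (which quantifies only over injective trees in $\mathcal{I}_T(\calP)^+$) applicable.
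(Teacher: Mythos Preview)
Your proposal is correct and mirrors the paper's own treatment: the paper likewise says the proof is identical to that of Theorem~\ref{cohenindestructible}, with the single additional observation that the outer-hull trees $T_{n,j}$ are injective because any $t$ forced into an injective $\dot{T}_n$ must itself be injective. One small wording point: in Case~1 it is not literally $p_j$ but some $q=p_m\le p_j$ witnessing $t\in T_{n,j}$ that ends up forcing $\dot{T}_n\in\mathcal{I}_T(\calP)\check{}$, exactly as in the function case.
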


Similarly, there is an analogue of Theorem \ref{preservation} for tight sets of permutations. We give the definition and theorem below.
\begin{definition}
Let $\P$ be a proper forcing notion and $\calP$ a tight eventually different set of permutations. We say that $\P$ {\em strongly preserves the tightness of} $\calP$ if for every sufficiently large $\theta$, every condition $p$ and every $M \prec H_\theta$ countable with $p, \P, \calP \in M$, if $g \in \calP$ densely diagonalizes every element of $M \cap \mathcal I_T(\calP)^+$ then there is an $(M, \P)$-generic $q \leq p$ so that $q$ forces that $g$ densely diagonalizes every element of $M[\dot{G}] \cap \mathcal I_T(\calP)^+$. Such a $q$ is called an $(M, \P, \calP, g)$-generic condition. 
\end{definition}

\begin{theorem}
Suppose that $\calP$ is a tight eventually different family. If $\langle \P_\alpha, \dot{\mathbb Q}_\alpha \; | \; \alpha< \gamma\rangle$ is a countable support iteration of proper forcing notions so that for all $\alpha$ we have $\forces_\alpha$ ``$\dot{\mathbb Q}_\alpha$ strongly preserves the tightness of $\check{\calP}$" then $\P_\gamma$ strongly preserves the tightness of $\calP$.
\label{preserve2}
\end{theorem}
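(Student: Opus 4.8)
The plan is to mimic the proof of Theorem~\ref{preservation} essentially verbatim, substituting ``eventually different set of permutations'' for ``eventually different family'' and ``injective tree'' for ``tree'' throughout, and to pinpoint the single place where injectivity genuinely has to be verified. Just as Theorem~\ref{preservation} reduced to Lemmas~\ref{successorstep} and~\ref{limitstep}, I would reduce Theorem~\ref{preserve2} to the permutation-analogues of those two lemmas and run the same induction on $\gamma$; so below I only describe those two ingredients.

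For the two-step case I would simply copy the proof of Lemma~\ref{successorstep}. That argument never inspects the combinatorics of $\calE$: it uses only that $(p,\dot q)$ is $(M,\P)$-generic and that a name in $M$ for an $\mathcal I_T(\calP)^+$-tree over $\P*\dot{\mathbb Q}$ unwinds to a $\P$-name for a $\dot{\mathbb Q}$-name in $M$ for such a tree. Nothing there is sensitive to the passage from functions to permutations or from trees to injective trees, so it transfers unchanged.

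The real work is in the permutation version of Lemma~\ref{limitstep}, and here I would reproduce the recursion building $\{p_n\}$, $\{\dot q_n\}$, $\{\dot t_n\}$, $\{\dot k_n\}$ word for word, now letting $\{\dot T_n\}$ enumerate the $\P_\gamma$-names in $M$ for injective trees in $\mathcal I_T(\calP)^+$ and taking $g\in\calP$. The only genuinely new point is the status of the tree $W$ that is formed at stage $n$ from outer hulls: $W$ consists of $s_n$, its predecessors, and every $t\supsetneq s_n$ for which some $\bar r\le r$ with $\bar r\hook\alpha_n\in G_{\alpha_n}$ forces $\check t\in\dot T_{\phi_0(n)}$ and $\check t\supsetneq\check s_n$. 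As in the function case, $W$ is a tree, $W\in M$, and the positivity argument is untouched: for $\P_\gamma$-generic $r\in G$ the evaluation of $\dot T_{\phi_0(n)}$ is contained in $W$, so a finite cover of $W$ by elements of $\calP$ would cover $\dot T_{\phi_0(n)}$, contradicting $\dot T_{\phi_0(n)}\in\mathcal I_T(\calP)^+$. What must be added is that $W$ is \emph{injective}: each node of $W$ other than $s_n$ and its initial segments is forced by some condition to lie on $\dot T_{\phi_0(n)}$, which is forced to be an injective tree, hence that node is an injective finite sequence; and $s_n$ with its initial segments is injective because $s_n$ too was decided to be a node of $\dot T_{\phi_0(n)}$. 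So $W$ is an injective tree in $\mathcal I_T(\calP)^+\cap M$, the inductive hypothesis at $\alpha_n$ applies to yield that $g$ densely diagonalizes $W$, and the remainder of the stage --- extracting $t\supsetneq s_n$ in $W$ and $k\in{\rm dom}(t)\setminus{\rm dom}(s_n)$ with $g(k)=t(k)$, and letting $\dot q_{n+1}$ name the witnessing $\bar r$ --- proceeds exactly as before. Setting $\bar p=\bigcup_n p_n$ finishes the lemma, and Theorem~\ref{preserve2} then follows as Theorem~\ref{preservation} did.

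I do not expect a serious obstacle; the one thing to be careful about is the injectivity bookkeeping. Concretely, every tree one reflects back to the ground model --- the outer hulls and the trees $W$ --- must be checked to be injective so that the tightness of $\calP$, which only promises a diagonalizing $g\in\calP$ against injective $\mathcal I_T(\calP)^+$-trees, can be invoked; and one should note that the node-enumeration convention fixed before Lemma~\ref{limitstep} restricts harmlessly to injective trees. Both points are immediate, since a name forced to denote an injective tree can only deposit injective finite sequences into its outer hull.
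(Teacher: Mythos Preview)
Your proposal is correct and matches the paper's approach exactly: the paper states that the proof of Theorem~\ref{preserve2} is identical to that of Theorem~\ref{preservation}, with the single additional observation that the tree $W$ built in the inductive step of Lemma~\ref{limitstep} is injective because any $t$ forced by some condition to lie in the injective tree $\dot{T}_{\phi_0(n)}$ must itself be injective. You have identified precisely this one point and argued it correctly.
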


The proof of Theorems \ref{cohenindestructible2} and \ref{preserve2} are identical to those of Theorems \ref{cohenindestructible} and \ref{preservation} so we leave the details to the reader. The one point to note is that the $T_{n, j}$'s of Theorem \ref{cohenindestructible} and the $W$ found in the inductive proof of Lemma \ref{limitstep} are both injective since $\dot{T}_n$ (respectively $\dot{T}_{\phi_0(n)}$) is forced to be injective hence if some $r \forces \check{t} \in \dot{T}_n$ (respectively $\dot{T}_{\phi_0(n)}$) then it must be the case that $t$ is injective.

As in this section every further one the proofs involving tight eventually different families and tight eventually different permutations are almost identical and, largely we prove the case of tight eventually different families in detail and leave the the case of tight eventually different permutations to the reader. This situation adds fuel to the idea that perhaps $\ZFC$ proves that $\mfa_e = \mfa_p$ and in any case shows how difficult it could be to separate these two invariants even if this ends up being possible. 

\section{Miller Forcing and the Consistency of $\mfa_e = \mfa_p < \mfd = \mfa_T$}
In this section we show that Miller forcing strongly preserves the tightness of any tight eventually different family of functions and any tight eventually different set of permutations. As a result we obtain the consistency of $\mfa_e = \mfa_p < \mfd = \mfa_T$. Recall that {\em Miller forcing}, also called rational perfect set forcing and denoted $\mathbb{PT}$ consists of all trees $T \subseteq \omega^{<\omega}$ so that for every node $t \in T$ there is an $s \in T$ extending $t$ with infinitely many immediate successors. The order is inclusion. As is often done we work with the dense subset of trees in which every node has either one or infinitely many immediate successors. It's well known that this forcing is proper and, when iterated $\omega_2$-many times with countable support produces a model of $non(\Me) = cov(\Me) = \aleph_1 < \mfd = \aleph_2$ (and $\mfa_T = \aleph_2$ as well). More information about Miller forcing can be found in \cite{BarJu95}, see in particular Definition 7.3.43. 

\begin{theorem}
Let $\calE$ be a tight eventually different family. Miller forcing $\mathbb{PT}$ strongly preserves the tightness of $\calE$.
\label{miller}
\end{theorem}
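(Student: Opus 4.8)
\textbf{Proof plan for Theorem~\ref{miller}.}
The plan is to imitate the structure of the fusion argument for Sacks forcing, adapted to the combinatorics of superperfect trees. Fix $\theta$ large, $M \prec H_\theta$ countable with $\mathbb{PT}, \calE, p \in M$, and $g \in \calE$ densely diagonalizing every element of $M \cap \mathcal I_T(\calE)^+$; I must produce an $(M,\mathbb{PT})$-generic $q \le p$ forcing that $g$ densely diagonalizes every element of $M[\dot G] \cap \mathcal I_T(\calE)^+$. Since Miller forcing is proper one knows how to build an $(M,\mathbb{PT})$-generic condition by a fusion along the splitting nodes: one enumerates the dense open sets $\{D_n \mid n < \omega\}$ of $\mathbb{PT}$ in $M$ together with the $\mathbb{PT}$-names $\{\dot T_n \mid n < \omega\}$ in $M$ for elements of $\mathcal I_T(\calE)^+$, and one recursively shrinks $p$, at each stage handling one more splitting node and (via a bijection $\phi \colon \omega \to \omega^2$ as in Lemma~\ref{limitstep}) one more pair "($n$-th tree, $k$-th node of that tree)''. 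The fusion ordering $\le_n$ is the usual one for $\mathbb{PT}$: $q \le_n q'$ if $q \le q'$ and the first $n$ splitting nodes of $q$ (in some fixed enumeration respecting the tree order) are splitting nodes of $q'$ with the same immediate successors below each.

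The crucial step is the one that uses tightness. Suppose at some stage we have a condition $q' \in M$, a splitting node $\sigma$ of $q'$ to be stabilized, and a name $\dot T_m$ together with a node of it we wish to diagonalize past. The point is to build, inside $M$, a \emph{single} ground-model tree $W \in \mathcal I_T(\calE)^+$ that ``reflects'' $\dot T_m$ below $q'$: namely, having first forced $q'$ to decide the relevant node of $\dot T_m$ to be some $s \in \omega^{<\omega}$, let $W$ be the downward closure (above $s$) of the set of all $t \supseteq s$ such that some $\bar r \le q'$ forces $\check t \in \dot T_m$. As in the limit-step lemma, $W \in M$ since $\dot T_m, q' \in M$, and $W \in \mathcal I_T(\calE)^+$ because any generic containing $q'$ puts the evaluation of $\dot T_m$ inside $W$, so covering $W$ by finitely many members of $\calE$ would cover $\dot T_m$. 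Now because $W \in M \cap \mathcal I_T(\calE)^+$, our fixed $g$ densely diagonalizes $W$: there is $t \in W$ with $t \supsetneq s$ and some $k \in \mathrm{dom}(t) \setminus \mathrm{dom}(s)$ with $t(k) = g(k)$, witnessed by some $\bar r \le q'$ forcing $\check t \in \dot T_m$. We then pass to $\bar r$ (intersected appropriately so as to keep $\sigma$ a splitting node and to stay inside $M$ and inside $D_m$), recording that $g$ has diagonalized the $m$-th tree once more at a point above where it had done so before.

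A subtlety special to Miller forcing, not present for Sacks, is that shrinking to $\bar r$ must not destroy the infinite-splitting requirement: when we fix the value of the generic at one of the infinitely many immediate successors of a splitting node in order to steer into $\bar r$, we must leave cofinally many other successors alive so that fusion still yields a superperfect tree. The standard way to handle this is to process, at stage $n$, only finitely many splitting nodes, and at each splitting node $\tau$ to ``fan out'': for each of the first $n$ immediate successors $\tau^\frown i$ of $\tau$ in the current condition, carry out the reflection-and-diagonalization above separately below $\tau^\frown i$, and then take the tree that is the union of $\{\tau\}$ with these finitely many shrunk pieces together with the cone above $\tau^\frown i$ for all but finitely many $i$ left untouched; iterating over all $n$ stabilizes every splitting node on every branch while always preserving that each kept splitting node still has infinitely many successors. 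Because $\mathbb{PT}$ has the property that one can always find a further splitting node inside any condition past any node, the fusion $q = \bigcap_n q_n$ is a legitimate Miller condition; genericity over $M$ follows since every $D_n$ was met densely below $q$ along the fusion, and the bookkeeping via $\phi$ guarantees that for each $m$ and each node index the condition $q$ forces that the corresponding node of $\dot T_m$ has an extension inside $\dot T_m$ agreeing with $g$ somewhere in the new coordinates---i.e.\ $g$ densely diagonalizes $\dot T_m$ in $M[\dot G]$. The main obstacle is exactly this bookkeeping: organizing the fan-out over immediate successors so that the reflected trees $W$ (one per splitting node per tree name) are all genuinely in $M \cap \mathcal I_T(\calE)^+$ at the moment $g$ is applied, while simultaneously keeping the fusion superperfect and catching all dense sets; once the indexing is set up correctly the verification is routine. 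The case of tight eventually different sets of permutations is identical, noting as in Section~5 that the reflected trees $W$ are automatically injective because $\dot T_m$ is forced to be.
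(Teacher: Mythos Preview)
Your proposal is correct and identifies the key idea exactly as the paper does: at each stage, reflect the name $\dot T_m$ to a ground-model tree $W \in M \cap \mathcal I_T(\calE)^+$ and apply the hypothesis on $g$ to find a node witnessing diagonalization, then pass to a condition forcing this. The difference lies in how the fusion is organized. You worry about a ``subtlety special to Miller forcing''---that shrinking might destroy infinite splitting at a node---and propose to handle it by fanning out over only finitely many immediate successors of each splitting node at each stage, leaving the rest untouched. The paper avoids this concern entirely by working one level deeper: at stage $n$ it shrinks below each $(n{+}1)$-st splitting node $t$ of $p_n$ (of which there are countably many), choosing $r_t \le (p_n)_t$ in $D_n \cap M$ that decides the relevant node and forces the diagonalization fact, and sets $p_{n+1} = \bigcup_t r_t$. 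Since the $n$-th splitting nodes lie strictly below all the $(n{+}1)$-st ones, every immediate successor of each $n$-th splitting node survives into $p_{n+1}$, so infinite splitting is preserved automatically and no fan-out bookkeeping is needed. Your approach works, but the extra bookkeeping you flag as the ``main obstacle'' simply does not arise if one shrinks at the right level; the paper's construction is correspondingly shorter.
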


Before proving this theorem we recall some basic terminology. Recall that if $p \in \mathbb{PT}$ is a Miller tree and $n < \omega$ then a node $t \in p$ is an ${n}^{\rm th}${\rm -splitting node} if it has infinitely many immediate successors and it has the $n - 1$  predecessors with this property. Denote by ${\rm Split}_n(p)$ the set of $n$-splitting nodes. We say that for two Miller trees $p, q \in\mathbb{PT}$ that $q \leq_n p$ if $q \leq p$ and ${\rm Split}_n(p) ={\rm Split}_n(q)$. 

\begin{proof}
 Let $p\in \mathbb{PT}$ be a condition, let $M\prec H_\theta$ countable with $\theta$ sufficiently large, $p, \mathbb{PT}, \calE \in M$. Let $g \in \calE$ densely diagonalize every $T \in M \cap \mathcal I_T(\calE)^+$. Let $\{T_n \; | \; n < \omega\}$ be an enumeration of all $\mathbb{PT}$ names in $M$ for trees in $\mathcal I_T(\calE)^+$. Let $\{D_n \; | \; n < \omega\}$ enumerate all dense open subsets of $\mathbb{PT}$ in $M$. Let $\phi:\omega \to \omega^2$ be a bijection with coordinate maps $\phi_0$ and $\phi_1$. Inductively we will construct sequences $\{p_n \; | \; n < \omega\}$, $\{\dot{t}_n \; | \; n < \omega\}$, $\{\dot{k}_n \; | \; n < \omega\}$ so that the following hold.
\begin{enumerate}
\item
$p_0 = p$
\item
$\{p_n \; |\; n < \omega\} \subseteq M$ and for all $n <\omega$, $p_{n+1} \leq_{n+1} p_n$
\item
For each $n< \omega$, $\dot{t}_n$ is a $\mathbb{PT}$ name in $M$ for a node in $\dot{T}_{\phi_0(n)}$ extending the $\phi_1(n)^{\rm th}$-node in $\dot{T}_{\phi_0(n)}$
\item
For each $n < \omega$, $\dot{k}_n$ is a name for an element of $\omega$ in $M$
\item
For each ${n+1}^{\rm st}$-splitting node $t$ of $p_{n+1}$ we have that $(p_{n+1})_t \in D_n$, and forces for some $s_n \in \omega^{<\omega}$ that the $\phi_1(n)^{\rm th}$-node in $\dot{T}_{\phi_0(n)}$ is $\check{s}_n$ and $(p_{n+1})_t \forces \dot{k}_n \in {\rm dom}(\dot{t}_n) \setminus {\rm dom}(\check{s}_n) \land \dot{t}_n(\dot{k}_n) = \check{g} (k)$.
\end{enumerate}

Suppose first that we can construct such a sequence. Let $q = \bigcap_{n < \omega} p_n$. It follows almost immediately that $q$ is $(M, \mathbb{PT}, \calE, g)$-generic as needed.

Therefore it remains to construct the requisite sequence. This is done by induction. Let $p_0 = p$. Now assume that $\{p_j \; | \; j < n+1\}$, $\{\dot{t}_j \; | \; j < n\}$ and $\{\dot{k}_j \; | \; j < n\}$ have been defined. We will define $p_{n+1}$, $\dot{t}_n$ and $\dot{k}_n$. For each ${n+1}^{\rm st}$-splitting node $t$ of $p_n$, let $q_t \leq (p_n)_t$ be an element of $M \cap D_n$ which forces for some $s_n \in \omega^{<\omega}$ that the $\phi_1(n)^{\rm th}$-node in $\dot{T}_{\phi_0(n)}$ is $\check{s}_n$. Let $W_t = \{u \supsetneq s_n \; | \; q_t \nVdash u \notin \dot{T}_{\phi_0(n)}\}$. As in the proof of Lemma \ref{preservation} $W_t$ alongside $s_n$ and its predecessors is a tree in $\mathcal I_T(\calE)^+ \cap M$ hence it is densely diagonalized by $g$. Therefore we can find a $u_t \in W_t$ and a $k_t \in {\rm dom}(u_t) \setminus {\rm dom}(s_n)$ so that $u_t(k_t) = g(k_t)$. Let $r_t \leq q_t$ force that $\check{u_t} \in \dot{T}_{\phi_0(n)}$. Now let $p_{n+1} = \bigcup_{t \in {\rm Split}_{n+1}(p_n)} r_t$. Let $\dot{t}_n = \{(\check{u}_t, r_t) \; | \; t \in {\rm Split}_{n+1}(p_n)\}$ and $\dot{k}_n =\{(\check{k}_t, r_t) \; | \; t \in {\rm Split}_{n+1}(p_n)\}$. Clearly these suffice.

\end{proof}

Essentially the same proof gives the analogous result for tight eventually different families of permutations. We state the theorem below and leave the (primarily cosmetic) modifications of the above proof to the reader.

\begin{theorem}
Let $\calP$ be a tight eventually different family of permutations. Miller forcing strongly preserves the tightness of $\calP$.
\end{theorem}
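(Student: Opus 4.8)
The plan is to imitate the proof of Theorem~\ref{miller} essentially verbatim, making only the cosmetic changes needed to keep track of injectivity. Since $\calP$ is a tight eventually different family of permutations, the only structural difference from the function case is that we work with injective trees throughout, i.e.\ elements of $\mathcal I_T(\calP)^+$ are injective trees $T \subseteq \omega^{<\omega}$ that cannot be pointwise-almost-covered by finitely many permutations from $\calP$. So I fix a condition $p \in \mathbb{PT}$, a countable $M \prec H_\theta$ with $p, \mathbb{PT}, \calP \in M$, and a $g \in \calP$ densely diagonalizing every element of $M \cap \mathcal I_T(\calP)^+$. I enumerate the $\mathbb{PT}$-names in $M$ for trees in $\mathcal I_T(\calP)^+$ as $\{\dot{T}_n \mid n<\omega\}$, the dense open subsets of $\mathbb{PT}$ in $M$ as $\{D_n \mid n<\omega\}$, and fix a bijection $\phi\colon\omega\to\omega^2$ with coordinate maps $\phi_0,\phi_1$. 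Then I build, by induction on $n$, a fusion sequence $p_0 = p \geq_1 p_1 \geq_2 p_2 \geq_3 \cdots$ inside $M$, together with names $\dot{t}_n$ for a node of $\dot{T}_{\phi_0(n)}$ strictly extending its $\phi_1(n)^{\rm th}$ node, and names $\dot{k}_n \in M$ for a natural number, exactly as in the statement of Theorem~\ref{miller}'s proof: for each $(n{+}1)^{\rm st}$-splitting node $t$ of $p_{n+1}$, $(p_{n+1})_t \in D_n$, $(p_{n+1})_t$ decides the $\phi_1(n)^{\rm th}$ node of $\dot{T}_{\phi_0(n)}$ to be some $\check{s}_n$, and $(p_{n+1})_t \forces \dot{k}_n \in {\rm dom}(\dot{t}_n)\setminus{\rm dom}(\check s_n) \land \dot{t}_n(\dot{k}_n) = \check g(\dot k_n)$. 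Taking $q = \bigcap_{n<\omega} p_n$ then yields an $(M,\mathbb{PT},\calP,g)$-generic condition below $p$.

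The inductive step proceeds exactly as in Theorem~\ref{miller}: given $p_n$, for each $(n{+}1)^{\rm st}$-splitting node $t$ of $p_n$ I choose $q_t \leq (p_n)_t$ in $M\cap D_n$ deciding the $\phi_1(n)^{\rm th}$ node of $\dot{T}_{\phi_0(n)}$ to be some $\check s_n$, form the \emph{outer hull} $W_t = \{u \supsetneq s_n \mid q_t \nVdash \check u \notin \dot{T}_{\phi_0(n)}\}$ together with $s_n$ and its predecessors, observe $W_t \in M$, and argue $W_t \in \mathcal I_T(\calP)^+$ (if $W_t$ were almost covered by finitely many permutations from $\calP$, then so would be the generic evaluation of $\dot{T}_{\phi_0(n)}$, contradicting that it is forced to lie in $\mathcal I_T(\calP)^+$). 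By elementarity of $M$ and the diagonalizing property of $g$, $g$ densely diagonalizes $W_t$, yielding $u_t \in W_t$ and $k_t \in {\rm dom}(u_t)\setminus{\rm dom}(s_n)$ with $u_t(k_t) = g(k_t)$; I then take $r_t \leq q_t$ forcing $\check u_t \in \dot{T}_{\phi_0(n)}$, and set $p_{n+1} = \bigcup_{t \in {\rm Split}_{n+1}(p_n)} r_t$, $\dot t_n = \{(\check u_t, r_t)\}_t$, $\dot k_n = \{(\check k_t, r_t)\}_t$.

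The one point requiring a genuinely new observation — and the only plausible obstacle — is verifying that $W_t$ (with its stem attached) is itself an \emph{injective} tree, so that it really is an element of $\mathcal I_T(\calP)^+$ rather than merely an element of a tree ideal associated to a family of functions. But this is immediate: $\dot{T}_{\phi_0(n)}$ is forced to be injective, so whenever some condition $r$ forces $\check u \in \dot{T}_{\phi_0(n)}$, the finite function $u$ must be injective; hence every node that ends up in $W_t$ is injective, and $W_t$ (being a downward-closed set of injective sequences above $s_n$, with $s_n$ injective as a sub-node of an injective node) is an injective tree. This is precisely the remark already made after Theorem~\ref{preserve2} about the injectivity of the $W$ appearing in Lemma~\ref{limitstep}. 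With that checked, $g$ diagonalizes $W_t$ by tightness of $\calP$ applied inside $M$, and the rest of the argument — the fusion, the genericity verification via the splitting-node book-keeping, and the conclusion that $q = \bigcap_n p_n$ is $(M,\mathbb{PT},\calP,g)$-generic — is word-for-word the function case. No new combinatorics is needed; everything carries over mutatis mutandis.
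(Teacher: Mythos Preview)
Your proposal is correct and follows exactly the approach the paper indicates: it reproduces the fusion argument from Theorem~\ref{miller} verbatim and makes the single additional observation (that the outer hull $W_t$ is an injective tree because every node forced into $\dot{T}_{\phi_0(n)}$ must be injective), which is precisely the point the paper flags after Theorem~\ref{preserve2}. There is nothing to add.
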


As a result we get the following.
\begin{theorem}
In the iterated Miller model the inequality $\mfa_e = \mfa_p < \mfd = \mfa_T$ holds.
\label{miller2}
\end{theorem}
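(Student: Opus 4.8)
The plan is to assemble the preservation machinery of Sections~4 and~5 with the standard analysis of the iterated Miller model; within this theorem there is essentially no new work, only bookkeeping. I would begin in a ground model $V \models \CH$. By Theorem~\ref{construction} there is a tight eventually different family $\calE \subseteq \baire$, and by Theorem~\ref{MA.tight} a tight eventually different set of permutations $\calP$; since $2^{\aleph_0} = \aleph_1$ under $\CH$, each of $\calE$ and $\calP$ may be taken of size $\aleph_1$. Let $\langle \P_\alpha, \dot{\Q}_\alpha \mid \alpha < \omega_2\rangle$ be the countable support iteration with $\forces_\alpha$ ``$\dot{\Q}_\alpha = \mathbb{PT}$'' at every stage, and let $\P = \P_{\omega_2}$; this is the iterated Miller model. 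Recall it is proper, preserves $\aleph_1$, has the $\aleph_2$-chain condition (countable support iteration of proper forcings of size $\le 2^{\aleph_0} = \aleph_1$ over a model of $\CH$), and forces $2^{\aleph_0} = \aleph_2$.

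Next I would run the preservation argument. An induction on $\alpha \le \omega_2$ using Theorem~\ref{preservation} (that is, Lemmas~\ref{successorstep} and~\ref{limitstep}) together with Theorem~\ref{miller} shows that $\P_\alpha$ strongly preserves the tightness of $\calE$: at each stage $\calE$ is still tight in $V^{\P_\alpha}$, so Theorem~\ref{miller}, being a theorem of $\ZFC$, applies there and gives $\forces_\alpha$ ``$\dot{\Q}_\alpha$ strongly preserves the tightness of $\check{\calE}$'', which is exactly the hypothesis needed to pass to $\P_{\alpha+1}$ (and the limit lemma handles limit stages). The permutation version of Theorem~\ref{miller} together with Theorem~\ref{preserve2} gives the same for $\calP$. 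In particular $\forces_\P$ ``$\check\calE$ and $\check\calP$ are tight'', so by Propositions~\ref{if.tight.then.max} and~\ref{if.tight.then.maximal} $\calE$ remains a maximal eventually different family and $\calP$ remains a maximal eventually different set of permutations in $V^\P$. Since both have size $\aleph_1$ and $\aleph_1$ is preserved, this yields $\mfa_e = \mfa_p = \aleph_1$ in $V^\P$.

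Finally I would recall the well-known behaviour of $\mfd$ and $\mfa_T$ in this model. Miller forcing adds an unbounded real, and a routine reflection argument along the $\omega_2$-length iteration shows that no $\aleph_1$-sized family of reals appearing at a bounded stage is dominating; hence $\mfd \ge \aleph_2$, and with $2^{\aleph_0} = \aleph_2$ we get $\mfd = \aleph_2$. By the $\ZFC$ inequality $\mfd \le \mfa_T$ recorded in the preliminaries (and $\mfa_T \le 2^{\aleph_0}$) we also get $\mfa_T = \aleph_2$. Combining, $\mfa_e = \mfa_p = \aleph_1 < \aleph_2 = \mfd = \mfa_T$ in $V^\P$, as required.

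The only substantive input is the strong preservation of tightness along the countable support Miller iteration, and that is already secured by Theorem~\ref{miller} plus Theorem~\ref{preservation} (and their permutation analogues); so I expect no genuine obstacle here. The one point deserving a line of care is the bootstrapping just described — that $\calE$ (resp.\ $\calP$) really stays tight at every intermediate stage, so that Theorem~\ref{miller} can be invoked there — but this is immediate from the inductive structure of the preservation proof.
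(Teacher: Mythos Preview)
Your proposal is correct and follows exactly the approach the paper takes: the paper states Theorem~\ref{miller2} as an immediate consequence (``As a result we get the following'') of Theorem~\ref{miller}, its permutation analogue, and the iteration preservation Theorems~\ref{preservation} and~\ref{preserve2}, combined with the standard facts that the iterated Miller model satisfies $\mfd = \aleph_2$ and hence $\mfa_T = \aleph_2$. Your write-up simply spells out the bookkeeping that the paper leaves implicit.
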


Note that this inequality also holds in the Cohen model, however unlike the Cohen model, this model $cov(\mathcal M) = \aleph_1$ so we can in fact also gain freedom over $cov(\mathcal M)$. Also, as mentioned in the introduction, $\mfa_p = \aleph_1$ in the Miller model was originally shown by Kastermans and Zhang using parametrized diamonds in \cite{KastermansZhang06}. 

\section{Partition Forcing and the Consistency of $\mfa_e = \mfa_p = \mfd < \mfa_T$}

In this section we show that Miller's partition forcing strongly preserves the tightness of any tight eventually different family of functions and any tight eventually different family of permutations. As a result we obtain the consistency of $\mfa_e = \mfa_p = \mfd < \mfa_T$. The proof of the preservation result mirrors the analogous one for tight MAD families given in \cite[Proposition 38]{restrictedMADfamilies}. We recall some terminology used there.

Recall from the introduction that if $\mathcal K$ is an uncountable partition of $\cantor$ into closed sets then $\P(\mathcal K)$, the partition forcing, is the set of perfect trees $p$ so that for all $C \in \mathcal K$ we have $[p] \cap C$ is nowhere dense in $[p]$. This forcing was introduced in \cite{MillerCOV} to increase $\mfa_T$. In order to ensure that a perfect tree is in $\P(\mathcal K)$ we need the definition of a {\em nice} set of reals.

\begin{definition}[Definition 36 of \cite{restrictedMADfamilies}]
Fix a partition of $\cantor$ into closed sets $\mathcal K = \{C_\alpha \; | \; \alpha < \omega_1\}$. We say that $X = \{x_s \; | \; s \in \omega^{<\omega}\} \subseteq \cantor$ is {\em nice} (for $\mathcal K$) if the following conditions holds:
\begin{enumerate}
\item
For every $s \in\omega^{<\omega}$ the sequence $\langle x_{s^\frown n}\rangle_{n < \omega}$ converges to $x_s$ and $\Delta(x_s, x_{s^\frown n}) < \Delta (x_s, x_{s^\frown n+1})$ for all $n < \omega$\footnote{Recall that if $x\neq y \in \cantor$ then $\Delta(x, y)$ is the least $k \in \omega$ so that $x(k) \neq y(k)$.}.
\item
For all $s, t, z \in \omega^{<\omega}$ if $s\subsetneq t \subsetneq z$ then $\Delta(x_s, x_t) < \Delta (x_t, x_z)$.
\item
For every $s \in \omega^{<\omega}$ let $\alpha_s < \omega_1$ so that $x_s \in C_{\alpha_s}$. If $s\subsetneq t$ then $\alpha_s \neq \alpha_t$.
\end{enumerate}
\end{definition}

The point is the following Lemma.
\begin{lemma}[Lemma 37 of \cite{restrictedMADfamilies}]
Let $p$ be a Sacks tree and let $\mathcal K$ be an uncountable partition of $\cantor$ into closed sets. If there is an $X = \{x_s\; | \; s \in \omega^{<\omega}\}$ which is nice for $\mathcal K$ and dense in $[p]$ then $p \in \P(\mathcal K)$.
\end{lemma}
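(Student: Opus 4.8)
The final displayed statement is Lemma 37 of \cite{restrictedMADfamilies}: if $p$ is a Sacks tree, $\mathcal K$ is an uncountable partition of $\cantor$ into closed sets, and there is an $X = \{x_s \mid s \in \omega^{<\omega}\}$ nice for $\mathcal K$ and dense in $[p]$, then $p \in \P(\mathcal K)$; i.e.\ for every $C \in \mathcal K$ the set $[p] \cap C$ is nowhere dense in $[p]$.

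\medskip
The plan is to fix an arbitrary $\alpha < \omega_1$ and show $[p]\cap C_\alpha$ is nowhere dense in $[p]$, i.e.\ that for every node $t\in p$ there is a node $t'\in p$ extending $t$ with $[p_{t'}]\cap C_\alpha=\emptyset$. First I would use density of $X$ in $[p]$: given $t\in p$, there is some $s\in\omega^{<\omega}$ with $x_s\in[p_t]$, so that some initial segment of $x_s$ extends $t$ and lies in $p$. Then I would walk one step down the nice tree to $x_{s^\frown n}$ for a suitable $n$. The key point is condition (3) of niceness: $\alpha_s \neq \alpha_{s^\frown n}$ for all $n$, and more generally the ordinals $\alpha_u$ are pairwise distinct along any branch of $\omega^{<\omega}$; combined with condition (1) (the $x_{s^\frown n}$ converge to $x_s$ with strictly increasing splitting levels $\Delta(x_s,x_{s^\frown n})$), this lets me isolate, arbitrarily high up, a node $t'$ of $p$ lying on the branch toward some $x_{s^\frown n}$ such that the whole subtree $[p_{t'}]$ is "captured" by the nice subtree hanging below $x_{s^\frown n}$, whose branch-closure lies inside $\bigcup_{u \supseteq s^\frown n} C_{\alpha_u}$ together with $\{x_v : v\}$-limits — and crucially avoids $C_\alpha$ because the ordinals $\alpha_u$ for $u\supseteq s^\frown n$ of length $\ge 1$ beyond $s$ can be arranged (by passing far enough down) to miss the single value $\alpha$.

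\medskip
More carefully, the argument I would give runs: since there are only countably many $x_u$ with $u$ an initial segment of any fixed branch but $\mathcal K$ is a genuine partition, each point of $\cantor$ lies in exactly one $C_\beta$; the closure of $\{x_u : u \in \omega^{<\omega}\}$ inside $[p]$ is all of $[p]$ by density, and every branch $z\in[p]$ is either some $x_u$ or a limit of the $x_{u\restriction}$'s. Using conditions (1) and (2), the map $s \mapsto x_s$ is a homeomorphic-type embedding of the tree $\omega^{<\omega}$ whose "ends" are dense branches; a branch $z$ of $[p]$ not of the form $x_u$ arises as $\lim_k x_{z^*\restriction k}$ for a unique $z^*\in\omega^\omega$, and then $z = x_{z^*}$ in the natural extension — so $z\in C_{\beta}$ where $\beta$ is the unique ordinal attached to that limit. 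The heart of the matter is then a counting/pigeonhole step: fix $t\in p$ and $s$ with $t\subseteq x_s$, $x_s\in[p]$. Along the sequence $n\mapsto x_{s^\frown n}$ the ordinals $\alpha_{s^\frown n}$ need not all differ from $\alpha$, but only the single index $n$ with $\alpha_{s^\frown n}=\alpha$ (if any) is bad; pick any $n$ with $\alpha_{s^\frown n}\neq\alpha$ and also, inductively, with $\alpha_u\neq\alpha$ for every $u\supseteq s^\frown n$ — this last clause is the genuinely delicate part and is what condition (3) is designed to make possible, since it forces the $\alpha_u$ to be distinct, hence at most one of them along any single extension path can equal $\alpha$, and by going deep one prunes to a cone on which $\alpha$ never reappears. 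Having fixed such an $n$, let $t' = x_{s^\frown n}\restriction m$ for $m$ large enough that $m > \Delta(x_s, x_{s^\frown n})$ and $t'\in p$; then $[p_{t'}]\subseteq \overline{\{x_u : u \supseteq s^\frown n\}}$, and every point of the latter set lies in some $C_{\alpha_u}$ or is a limit $x_{u^*}$ with $u^*\supseteq s^\frown n$, so lies in $C_\beta$ for some $\beta\neq\alpha$. Hence $[p_{t'}]\cap C_\alpha=\emptyset$. Since $t$ was arbitrary, $[p]\cap C_\alpha$ is nowhere dense.

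\medskip
The main obstacle, as flagged above, is making precise and correct the claim that one can prune below some $x_{s^\frown n}$ to a cone on which the ordinal $\alpha$ is entirely absent. Condition (3) only gives distinctness of $\alpha_s$ and $\alpha_t$ when $s\subseteq t$; it does not a priori say that $\alpha$ fails to occur as $\alpha_u$ for incomparable $u$'s below $s^\frown n$. So the careful version must instead observe that it suffices to avoid $\alpha$ as an $\alpha_u$ or as a limit-ordinal-value along branches through $[p_{t'}]$, and that this can be arranged by choosing $t'$ appropriately — or, alternatively, one shows directly that $C_\alpha$ being closed and $x_s\notin C_\alpha$ (which we may assume after one more application of density, choosing $s$ with $x_s\notin C_\alpha$, possible since $\{x_u\}$ is dense and $C_\alpha$ has empty interior in $[p]$ once we know, say by a Baire-category argument using that $\mathcal K$ is uncountable, that no $C_\alpha$ contains a basic clopen piece of $[p]$) forces a whole neighborhood of $x_s$ in $[p]$ to miss $C_\alpha$, and this neighborhood contains $[p_{t'}]$ for $t'$ a long enough initial segment of some $x_{s^\frown n}$. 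I would present whichever of these two routes turns out cleanest; I expect the neighborhood argument, leaning on closedness of $C_\alpha$ and the convergence in condition (1), to be the most economical, with condition (3) used only to guarantee that $x_s$ itself can be chosen outside $C_\alpha$.
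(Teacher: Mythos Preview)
The paper does not give its own proof of this lemma; it is quoted from \cite{restrictedMADfamilies} and used as a black box, so there is no in-paper argument to compare against.

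On the merits: your second approach (the neighborhood argument using closedness of $C_\alpha$) is the right one, but the step where you produce $x_s \notin C_\alpha$ is circular as written. You propose to choose such an $x_s$ ``since $\{x_u\}$ is dense and $C_\alpha$ has empty interior in $[p]$,'' but $C_\alpha$ having empty interior in $[p]$ is precisely the statement you are trying to prove (for closed sets, nowhere dense is the same as empty interior). The Baire-category detour is unnecessary. The direct route is: given $t \in p$, use density of $X$ to pick any $x_s \in [p_t]$. If $\alpha_s = \alpha$, then by condition (3) we have $\alpha_{s^\frown n} \neq \alpha$ for every $n$, and by condition (1) the points $x_{s^\frown n}$ converge to $x_s$, so for large $n$ they agree with $x_s$ past $|t|$ and hence still lie in $[p_t]$; replace $s$ by such an $s^\frown n$. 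Now $x_s \notin C_\alpha$, and since $C_\alpha$ is closed in $2^\omega$ there is $m > |t|$ with $[x_s \restriction m] \cap C_\alpha = \emptyset$. Setting $t' = x_s \restriction m \in p$ gives $t' \supseteq t$ and $[p_{t'}] \subseteq [t']$, hence $[p_{t'}] \cap C_\alpha = \emptyset$.

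Your first approach---finding a cone $\{u : u \supseteq s^\frown n\}$ on which $\alpha_u \neq \alpha$ throughout---should be abandoned. As you yourself note, condition (3) only gives distinctness along $\subseteq$-chains, not across incomparable nodes, so no such cone need exist; and the claimed containment $[p_{t'}] \subseteq \overline{\{x_u : u \supseteq s^\frown n\}}$ is not justified by the hypotheses. None of that machinery is needed once you argue as in the previous paragraph.
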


Armed with these facts we can prove the main theorem of this section. Our proof is extremely similar to the analogous proof for tight MAD families, \cite[Proposition 38]{restrictedMADfamilies}. The modifications follow those of Theorem \ref{miller}.

\begin{theorem}
Let $\mathcal K \subseteq P(\cantor)$ be an uncountable partition of $\cantor$ into closed sets and let $\calE$ be a tight eventually different family. The forcing notion $\P(\mathcal K)$ strongly preserves the tightness of $\calE$.
\label{partitionpreservation1}
\end{theorem}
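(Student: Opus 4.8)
The plan is to run a fusion argument analogous to the proofs of Theorem~\ref{miller} and \cite[Proposition 38]{restrictedMADfamilies}, but where the fusion sequence $\{p_n\}$ must additionally carry along a nice set of reals witnessing membership in $\P(\mathcal K)$ at each stage. So fix $p \in \P(\mathcal K)$, a countable $M \prec H_\theta$ with $p, \P(\mathcal K), \calE, \mathcal K \in M$, and $g \in \calE$ densely diagonalizing every element of $M \cap \mathcal I_T(\calE)^+$. Enumerate in $M$ the $\P(\mathcal K)$-names $\{\dot{T}_n\}$ for trees in $\mathcal I_T(\calE)^+$, the dense open sets $\{D_n\}$, and fix a bijection $\phi:\omega \to \omega^2$. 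First I would recall (or re-prove inside $M$) that $\P(\mathcal K)$, being proper and of the Sacks type, admits fusion: one builds $p_{n+1} \leq_{n+1} p_n$ all in $M$, together with an approximation $\{x^n_s \mid s \in \omega^{\le n}\}$ to a nice set, such that $q = \bigcap_n p_n$ has a dense nice set $X = \{x_s\}$ and hence $q \in \P(\mathcal K)$ by Lemma~37 of \cite{restrictedMADfamilies}.

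The core of the construction is at a single fusion step: given $p_n \in M$ with its finite stock of $(n+1)$st-splitting nodes, for each such splitting node $t$ I would descend to $q_t \leq (p_n)_t$ in $M \cap D_n$ deciding the $\phi_1(n)^{\mathrm{th}}$ node of $\dot{T}_{\phi_0(n)}$ to be some $\check{s}_n$, form the outer hull $W_t = \{u \supsetneq s_n \mid q_t \nVdash \check{u} \notin \dot{T}_{\phi_0(n)}\}$, and observe, exactly as in the proof of Lemma~\ref{limitstep}, that $W_t$ (together with $s_n$ and its predecessors) is a tree in $\mathcal I_T(\calE)^+ \cap M$ — because any generic extending $q_t$ interprets $\dot{T}_{\phi_0(n)}$ inside $W_t$, so a finite cover of $W_t$ by functions from $\calE$ would contradict that $\dot{T}_{\phi_0(n)}$ is forced into $\mathcal I_T(\calE)^+$. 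Since $g$ densely diagonalizes $W_t$, pick $u_t \in W_t$ with $u_t \supsetneq s_n$ and $k_t \in \mathrm{dom}(u_t) \setminus \mathrm{dom}(s_n)$ with $u_t(k_t) = g(k_t)$, then $r_t \leq q_t$ forcing $\check{u}_t \in \dot{T}_{\phi_0(n)}$. Then I would amalgamate the $r_t$ over all $(n+1)$st-splitting nodes of $p_n$ into $p_{n+1}$, recording the names $\dot{t}_n = \{(\check{u}_t, r_t)\}$ and $\dot{k}_n = \{(\check{k}_t, r_t)\}$; this guarantees $q$ forces $g$ to densely diagonalize every $\dot{T}_n$, so $q$ is $(M, \P(\mathcal K), \calE, g)$-generic.

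The one genuinely new ingredient beyond Theorem~\ref{miller} is keeping the nice-set bookkeeping alive simultaneously with the diagonalization bookkeeping: when I shrink $(p_n)_t$ to $r_t$ I must not destroy the partial nice set already committed at earlier levels, and I must be able to choose the new points $x^{n+1}_{s^\frown m}$ in $[r_t]$ satisfying the $\Delta$-monotonicity clauses (1)--(2) and, crucially, the clause (3) that consecutive points along any branch lie in distinct pieces $C_\alpha$ of the partition. This is precisely the content of the corresponding step in \cite[Proposition 38]{restrictedMADfamilies}, and the only thing to check is that passing to $r_t$ (a further Sacks shrinking in $M$) still leaves room: $[r_t]$ is perfect, each $C_\alpha$ is closed and nowhere dense in it, and we are avoiding only countably many (in fact finitely many at each stage) already-used indices $\alpha$, so a suitable point exists. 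I would therefore interleave the two recursions — at stage $n$ first do the $D_n$/$\dot{T}_{\phi_0(n)}$ refinement to get the $r_t$'s, then within $[r_t]$ choose the level-$(n+1)$ nice-set points — and note everything stays inside $M$. The main obstacle, then, is purely the combinatorial compatibility of the diagonalization refinements with the nice-set requirement~(3); once one observes (as above) that shrinking to $r_t$ costs only finitely many forbidden partition-indices while leaving a perfect set to choose from, the two bookkeeping tasks do not interfere and the fusion goes through. As before, the permutation version (with $\dot{T}_n$ forced injective, so that the outer hulls $W_t$ are automatically injective) is proved by the identical argument, so I would state it as a corollary and leave the cosmetic changes to the reader.
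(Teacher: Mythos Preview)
Your proposal captures all the key ingredients --- the outer-hull trees $W_t \in \mathcal I_T(\calE)^+ \cap M$, dense diagonalization by $g$, and the nice-set bookkeeping to land the fusion limit $q$ in $\P(\mathcal K)$ --- and these are exactly the ideas the paper uses. There is, however, a genuine organizational slip in how you describe the fusion step. You speak of a ``finite stock of $(n+1)$st-splitting nodes'' and propose to shrink each $(p_n)_t$ to $r_t$ in Sacks fashion, then afterwards choose the level-$(n+1)$ nice-set points inside the $[r_t]$'s. But the nice set is indexed by $\omega^{<\omega}$: already at stage $n\ge 1$ there are countably many committed points $x_s$ ($s\in\omega^n$), and each must remain in $[p_{n+1}]$. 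If you shrink the finitely many Sacks cones arbitrarily, you will in general throw these $x_s$'s away, and there is no mechanism in your sketch that prevents this.

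The paper's fusion is therefore not Sacks-style with the nice set tacked on; it is \emph{driven} by the nice set. At stage $n$ one works with countably many cones: for each $s\in\omega^n$ and each of infinitely many splitting levels $m\in Y_s$ along $x_s$, one passes to the \emph{other} side $t_m=(x_s\hook m)^\frown(1-x_s(m))$, and it is in that side-cone that one does all the work (enter $D_n$, decide the $\phi_1(n)^{\rm th}$ node, form the outer hull, diagonalize with $g$, avoid the finitely many relevant $C_{\alpha_z}$'s) to obtain $r^s_m$. The new point $x_{s^\frown i}$ is then any branch of $r^s_{m^s_i}$, and $p_{n+1}=\bigcup_{s,i} r^s_{m^s_i}$. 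The old point $x_s$ survives not because its cone was left untouched but because the stems of the $r^s_m$ converge to $x_s$, so $x_s$ is a limit point of $[p_{n+1}]$. Once you reorganize your inductive step this way --- countably many side-cones indexed by $(s,i)\in\omega^n\times\omega$ rather than finitely many splitting nodes --- your argument goes through verbatim and coincides with the paper's.
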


\begin{proof}
Let $p\in \P(\mathcal K)$ be a condition, let $M\prec H_\theta$ countable with $\theta$ sufficiently large, $p, \P(\mathcal K), \mathcal K, \calE \in M$. Let $g \in \calE$ densely diagonalize every $T \in M \cap \mathcal I_T(\calE)^+$. Let $\{T_n \; | \; n < \omega\}$ be an enumeration of all $\P(\mathcal K)$ names in $M$ for trees in $\mathcal I_T(\calE)^+$. Let $\{D_n \; | \; n < \omega\}$ enumerate all dense open subsets of $\P(\mathcal K)$ in $M$. Let $\varphi:\omega \to \omega^2$ be a bijection with coordinate maps $\phi_0$ and $\phi_1$. Inductively we will construct sequences $\{p_n \; | \; n < \omega\}$, $\{\dot{t}_n \; | \; n < \omega\}$, $\{\dot{k}_n \; | \; n < \omega\}$ and a set $X = \{x_s \; | \; s \in \omega^{<\omega}\}$ so that the following hold.
\begin{enumerate}
\item
$p_0 = p$
\item
$\{p_n \; |\; n < \omega\} \subseteq M \cap \P(\mathcal K)$ and for all $n <\omega$ $p_{n+1} \leq p_n$
\item
$X \subseteq 2^\omega \cap M$ is nice for $\mathcal K$
\item
$X \subseteq [p_n]$ for every $n < \omega$
\item
For each $n$ $\dot{t}_n$ is a $\P(\mathcal K)$ name for a node in $\dot{T}_{\phi_0(n)}$ extending the $\phi_1(n)^{\rm th}$-node in $\dot{T}_{\phi_0(n)}$
\item
For each $n < \omega$ $\dot{k}_n$ is a name for an element of $\omega$ in $M$
\item
For each $s \in \omega^n$ and $i, m \in \omega$ if $m = \Delta(x_s, x_{s^\frown i})$ and $t = (x_{s^\frown i}) \hook m$ then $(p_{n+1})_t \in D_n$, forces for some $s_n \in \omega^{<\omega}$ that the $\phi_1(n)^{\rm th}$-node in $\dot{T}_{\phi_0(n)}$ is $\check{s}_n$ and $(p_{n+1})_t \forces \dot{k}_n \in {\rm dom}(\dot{t}_n) \setminus {\rm dom}(\check{s}_n) \land \dot{t}_n(\dot{k}_n) = \check{g} (k)$.
\end{enumerate}

Suppose first that we can construct such a sequence. Let $q = \bigcap_{n < \omega} p_n$. Clearly $q$ is a condition since it's a perfect tree in which $X$ is dense. From this it follows almost immediately that it is $(M, \P(\mathcal K), \calE, g)$-generic as needed.

Therefore it remains to construct the sequence described above. This is done by induction. Let $p_0 = p$, $x_{\emptyset}$ be any element of $[p_0] \cap M$. Now assume that $\{p_j \; | \; j < n+1\}$, $\{x_s \; | \; s \in \omega^{\leq n}\}$, $\{\dot{t}_j \; | \; j < n\}$ and $\{\dot{k}_j \; | \; j < n\}$ have been defined. We will define $p_{n+1}$, $\{x_s \; | \; s \in \omega^{n+1}\}$, $\dot{t}_n$ and $\dot{k}_n$. For each $s \in \omega^n$ let $l \in \omega$ be so that $l > \Delta (x_s, x_{s'})$ for all $s ' \subsetneq s$. Define $Y_s$ to be the set of all $m > l$ so that $x_s \hook m \in p_n$ is a splitting node. For each $m \in Y_s$ let $t_m = (x_s \hook m)^\frown (1 - x_s (m))$ (which is a node of $p_n$ since $x_s \hook m$ is splitting). Let $p^s_m = (p_n)_{t_m}$. Note that $p^s_m \in M$. By strengthening if necessary we may also assume that there is an $u^s_m \in {\omega}^{\omega}$ so that $p^s_m \forces$``$\check{u}^s_m$ is the $\phi_1(n)^{\rm th}$ node in $T_{\phi_0(n)}$". Now let $W^s_m = \{t \in \omega^{<\omega} \; | \; t \supsetneq u^s_m \; {\rm and} \; p^s_m \nVdash t \notin \dot{T}_{\phi_0(n)}\}$. As in the proof of Theorem \ref{preservation}, $W^s_m$, alongside $u^s_m$ and its predecessors is a tree in $\mathcal I_T(\calE)^+$ (in $M$) so $g$ densely diagonalizes it. Let $r^s_m \leq p^s_m$ so that $r^s_m \in D_n$, $[r^s_m] \cap C_{\alpha_z} = \emptyset$ for every $z \subseteq s$ and there is a $t^s_m \in W^s_m$ and a $k^s_m < \omega$ so that $k^s_m \in {\rm dom}(t) \setminus{\rm dom}(u^s_m)$ and $r^s_m \forces \check{t}^s_m \in \dot{T}_{\phi_0(n)}$ and $t^s_m(k^s_m) = g(k^s_m)$. Enumerate $Y_s$ as $\{m^s_i \; | \; i < \omega\}$. For each $i < \omega$ choose $x_{s^\frown i}$ to be any branch in $r^s_{m^s_i}$ and let $p_{n+1} = \bigcup\{r^s_{m^s_i} \; | \; s \in \omega^n \land i \in \omega\}$. Finally let $\dot{t}_n = \{(\check{t}^s_{m^s_i}, r^s_{m^s_i}) \; | \;  s \in \omega^n \land i \in \omega\}$ and $\dot{k}_n = \{(\check{k}^s_{m^s_i}, r^s_{m^s_i}) \; | \;  s \in \omega^n \land i \in \omega\}$. Clearly these suffice.

\end{proof}

As before essentially the same proof gives the analogous result for tight eventually different families of permutations. We state the theorem below and leave the (primarily cosmetic) modifications of the above proof to the reader.

\begin{theorem}
Let $\mathcal K \subseteq P(\cantor)$ be an uncountable partition of $\cantor$ into closed sets and let $\calP$ be a tight eventually different family of permutations. The forcing notion $\P(\mathcal K)$ strongly preserves the tightness of $\calP$.
\end{theorem}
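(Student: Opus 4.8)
The plan is to transcribe the proof of Theorem~\ref{partitionpreservation1} almost word for word, with $\calE$ replaced by $\calP$, the tree ideal $\mathcal I_T(\calE)$ by $\mathcal I_T(\calP)$, and the diagonalizing function $g\in\calE$ by a permutation $g\in\calP$ that densely diagonalizes every (injective) tree in $M\cap\mathcal I_T(\calP)^+$. First I would fix a condition $p\in\P(\mathcal K)$ and a countable $M\prec H_\theta$ with $p,\P(\mathcal K),\mathcal K,\calP\in M$, enumerate the $\P(\mathcal K)$-names $\{\dot T_n\mid n<\omega\}$ for elements of $\mathcal I_T(\calP)^+$, the dense open subsets $\{D_n\mid n<\omega\}$ of $\P(\mathcal K)$ lying in $M$, and fix a bijection $\varphi\colon\omega\to\omega^2$ with coordinate maps $\phi_0,\phi_1$. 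Then I would run the same fusion, building a $\leq$-decreasing sequence $\{p_n\mid n<\omega\}\subseteq M$, a set $X=\{x_s\mid s\in\omega^{<\omega}\}\subseteq 2^\omega\cap M$ that is nice for $\mathcal K$ and dense in each $[p_n]$, names $\dot t_n$ for a node of $\dot T_{\phi_0(n)}$ extending its $\phi_1(n)^{\rm th}$ node, and names $\dot k_n$ for the coordinate witnessing agreement with $g$, all subject to clauses (1)--(7) exactly as stated in the proof of Theorem~\ref{partitionpreservation1}.

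The successor step of the fusion is carried out identically: for each $s\in\omega^n$ and each splitting level $m$ of $p_n$ above the domains already committed I would pass to the off-branch node $t_m=(x_s\hook m)^\frown(1-x_s(m))$, put $p^s_m=(p_n)_{t_m}\in M$, strengthen it inside $M$ to decide the $\phi_1(n)^{\rm th}$ node of $\dot T_{\phi_0(n)}$ to be some $u^s_m$, form $W^s_m=\{t\in\omega^{<\omega}\mid t\supsetneq u^s_m\text{ and }p^s_m\nVdash t\notin\dot T_{\phi_0(n)}\}$, note that $W^s_m$ together with $u^s_m$ and its predecessors is a tree in $\mathcal I_T(\calP)^+\cap M$, use that $g$ densely diagonalizes it to pick $t^s_m\in W^s_m$ and $k^s_m\in{\rm dom}(t^s_m)\setminus{\rm dom}(u^s_m)$ with $t^s_m(k^s_m)=g(k^s_m)$, and finally shrink to $r^s_m\leq p^s_m$ with $r^s_m\in D_n$, $[r^s_m]\cap C_{\alpha_z}=\emptyset$ for all $z\subseteq s$, and $r^s_m\forces\check{t}^s_m\in\dot T_{\phi_0(n)}$. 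Assembling $p_{n+1}=\bigcup\{r^s_{m^s_i}\mid s\in\omega^n,\ i<\omega\}$, choosing each $x_{s^\frown i}$ to be a branch of $r^s_{m^s_i}$, and taking $\dot t_n,\dot k_n$ to be the obvious mixtures over these conditions finishes the step. Setting $q=\bigcap_n p_n$, the set $X$ is dense in the perfect tree $[q]$, so $q\in\P(\mathcal K)$ by Lemma~37 of \cite{restrictedMADfamilies}, stated above, and unwinding the clauses shows $q$ is $(M,\P(\mathcal K),\calP,g)$-generic.

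The only place where the permutation hypothesis is genuinely used, and the one point deserving attention rather than being a real obstacle, is that each $W^s_m$ must be an \emph{injective} tree so that it really belongs to $\mathcal I_T(\calP)^+$ and the tightness of $\calP$ applies; this is the observation recorded at the end of Section~5, namely that since $\dot T_{\phi_0(n)}$ is forced to be injective, any $t$ with $p^s_m\nVdash t\notin\dot T_{\phi_0(n)}$ is itself injective, whence $W^s_m$ consists of injective sequences and the associated $u^s_m$, $\dot t_n$ are injective as well. Every other ingredient --- the niceness bookkeeping that keeps $[p_{n+1}]$ disjoint from the closed pieces $C_{\alpha_z}$ meeting the relevant $x_z$'s, the meeting of the dense open sets $D_n$, and the verification that $q$ is $(M,\P(\mathcal K))$-generic and forces $g$ to densely diagonalize each $\dot T_n$ --- is literally the same as in the proof of Theorem~\ref{partitionpreservation1}, so there is essentially nothing new to prove beyond this cosmetic check.
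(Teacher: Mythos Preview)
Your proposal is correct and matches the paper's own treatment exactly: the paper does not write out a separate proof but simply states that the argument for Theorem~\ref{partitionpreservation1} carries over with only cosmetic changes, and in Section~5 already isolates the one point you flag, namely that the auxiliary tree $W^s_m$ is injective because $\dot T_{\phi_0(n)}$ is forced to be injective. There is nothing to add.
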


As a result of these theorems we have the following.
\begin{theorem}
In the iterated partition forcing model the inequality $\mfa_e = \mfa_p  = \mfd < \mfa_T$ holds. 
\label{partitionpreservation2}
\end{theorem}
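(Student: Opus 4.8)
The plan is to combine the preservation machinery of Sections~4 and~7 with the standard properties of Miller's partition forcing. First I would work over a ground model $V \models \CH$ and, using \CH, fix by Theorem~\ref{construction} and Theorem~\ref{MA.tight} a tight eventually different family $\calE$ and a tight eventually different set of permutations $\calP$, each of size $\aleph_1$. Then I would build a countable support iteration $\langle \P_\alpha, \dot{\Q}_\alpha \mid \alpha < \omega_2 \rangle$ together with a bookkeeping function $h\colon\omega_2\to\omega_2$ attaining each value cofinally often, arranged so that $\forces_\alpha$``$\dot{\Q}_\alpha = \P(\dot{\mathcal K}_\alpha)$'' for some uncountable partition $\dot{\mathcal K}_\alpha$ of $\cantor$ into closed sets, and so that every nice $\P_\alpha$-name ($\alpha<\omega_2$) for an $\aleph_1$-sized such partition is realized as $\dot{\mathcal K}_\beta$ for cofinally many $\beta\geq\alpha$. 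Since $\P(\mathcal K)$ is proper, $\baire$-bounding (Spinas, \cite[Lemma~2.7]{partitionnumbers}) and of size $2^{\aleph_0}$, a routine induction gives that $\CH$ holds in every $V[G_\alpha]$ with $\alpha<\omega_2$, that $\P_{\omega_2}$ is proper and $\aleph_2$-c.c., and that $2^{\aleph_0}=\aleph_2$ in $V[G_{\omega_2}]$, so the iteration and its bookkeeping are well defined.

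Next I would compute the four cardinals in $V[G_{\omega_2}]$. For $\mfd = \aleph_1$: each iterand is $\baire$-bounding, so by the standard preservation theorem for $\baire$-bounding under countable support iterations of proper forcing \cite{BarJu95}, $\P_{\omega_2}$ is $\baire$-bounding; hence $V\cap\baire$ remains dominating and $\mfd = |V\cap\baire| = \aleph_1$. For $\mfa_T = \aleph_2$: in $V[G_{\omega_2}]$ let $\mathcal K = \{C_\gamma \mid \gamma<\omega_1\}$ be a partition of $\cantor$ into closed sets; each $C_\gamma$ is coded by a real, which appears at a bounded stage (reals in a countable support iteration of proper forcing show up at stages $<\omega_2$, and $\sup_{\gamma<\omega_1}\alpha_\gamma<\omega_2$ since $\omega_2$ stays regular), so a standard reflection argument puts $\mathcal K\in V[G_\alpha]$ for some $\alpha<\omega_2$. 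The bookkeeping then supplies $\beta\in[\alpha,\omega_2)$ with $\dot{\Q}_\beta = \P(\mathcal K)$, and $\P(\mathcal K)$ adds a real in $\cantor$ lying in no $C_\gamma$, so $\mathcal K$ does not cover $\cantor$ in $V[G_{\beta+1}]$ --- a contradiction. Hence no $\aleph_1$-sized partition of $\cantor$ into closed sets survives, so $\mfa_T\geq\aleph_2$, and since $\mfa_T\leq 2^{\aleph_0}=\aleph_2$ we get $\mfa_T=\aleph_2$.

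Finally, for $\mfa_e = \mfa_p = \aleph_1$ I would verify the hypothesis of Theorem~\ref{preservation} for this iteration with the family $\calE$. This is proved by induction on $\alpha<\omega_2$: assuming $\P_\alpha$ strongly preserves the tightness of $\calE$ (so that $\calE$ is again tight in $V[G_\alpha]$), Theorem~\ref{partitionpreservation1} applied inside $V[G_\alpha]$ yields $\forces_\alpha$``$\dot{\Q}_\alpha$ strongly preserves the tightness of $\check{\calE}$'', whence Lemmas~\ref{successorstep} and~\ref{limitstep} carry the property through successor and limit stages. Theorem~\ref{preservation} then gives that $\P_{\omega_2}$ strongly preserves the tightness of $\calE$; the same argument with Theorem~\ref{preserve2} and the permutation analogue of Theorem~\ref{partitionpreservation1} gives the corresponding statement for $\calP$. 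Hence $\calE$ and $\calP$ remain tight, and so maximal by Propositions~\ref{if.tight.then.max} and~\ref{if.tight.then.maximal}, in $V[G_{\omega_2}]$; thus $\mfa_e\leq|\calE| = \aleph_1$ and $\mfa_p\leq|\calP| = \aleph_1$, and together with $\aleph_1\leq non(\Me)\leq\mfa_e,\mfa_p$ this gives $\mfa_e=\mfa_p=\aleph_1$. Combining, $\mfa_e = \mfa_p = \mfd = \aleph_1 < \aleph_2 = \mfa_T$. (Iterating the analogous argument with a tight MAD family in the style of \cite{restrictedMADfamilies} additionally yields $\mfa=\aleph_1$, giving item~(2) of Theorem~\ref{mainthm}.)

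The one point requiring care is organizational rather than technical: the iterands must simultaneously be partition forcings, suitably diagonalized by the bookkeeping so as to force $\mfa_T=\aleph_2$, and satisfy the hypothesis of Theorem~\ref{preservation}, so that \emph{every} iterand strongly preserves the tightness of $\calE$ and of $\calP$. It is precisely Theorem~\ref{partitionpreservation1} and its permutation analogue that reconcile these two demands, so no new combinatorics is needed; the remaining ingredients --- $\baire$-boundedness of partition forcing, its preservation under countable support iteration, the bookkeeping apparatus, and the appearance of reals of $V[G_{\omega_2}]$ at bounded stages --- are all standard and can be black-boxed as in the proof of Proposition~\ref{randommodel2}.
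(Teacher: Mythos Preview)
Your proposal is correct and follows essentially the same approach as the paper's proof: set up the countable support iteration of partition forcings with bookkeeping, use $\baire$-boundedness for $\mfd=\aleph_1$, Miller's result for $\mfa_T=\aleph_2$, and the strong preservation theorems (Theorems~\ref{partitionpreservation1}, \ref{preservation}, \ref{preserve2}) to keep the ground-model tight families $\calE$ and $\calP$ tight, hence maximal, of size $\aleph_1$. The paper's proof is much terser and simply cites these ingredients, whereas you have spelled out the reflection argument for $\mfa_T$ and the inductive verification of the hypothesis of Theorem~\ref{preservation}, but there is no substantive difference in strategy.
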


\begin{proof}
Using a bookkeeping device to keep track with the partitions, let $\P$ be the $\omega_2$ length countable support of partition forcing. Let $G\subseteq\P$ be generic. In $V[G]$ we have $\mfa_e = \aleph_1$ by Theorem \ref{partitionpreservation1} and $\mfa_p = \aleph_1$ by Theorem \ref{partitionpreservation2}. Moreover this forcing is known to be $\baire$-bounding (\cite{partitionnumbers}) so $\mfd = \aleph_1$. Finally that $\mfa_T = \aleph_2$ in this model is by \cite[Theorem 6]{MillerCOV} (indeed this is what it was designed to do). 
\end{proof}

\begin{remark}
In \cite{IndependentCompact} it was shown that in the iterated partition forcing model that $\mfa = \mathfrak{i} = \mathfrak{u} = \aleph_1$. Alongside Theorem \ref{partitionpreservation2} this result gives credence to the idea that the iterated partition forcing model is a model where every relative of $\mfa$ other than $\mfa_T$ is $\aleph_1$. It remains open what happens in this model to $\mfa_g$.
\end{remark}

Putting together the results of this section, the previous one, Fact \ref{randommodel} and Proposition \ref{randommodel2} we get the following which encapsulates one of the main results of this paper.

\begin{corollary}
Any $\{\aleph_1, \aleph_2\}$-valued assignment of $\{\mfa_e, \mfa_p, \mfd, \mfa_T\}$ respecting $\mfd \leq \mfa_T$ and $\mfa_e = \mfa_p$ is consistent. In particular, $\mfa_p$ and $\mfa_e$ are both independent of both $\mfd$ and $\mfa_T$.
\label{ind}
\end{corollary}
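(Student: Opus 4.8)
The plan is to assemble the four consistency results already established (or cited) in the paper and to check that, together with the trivial upper bounds on all four cardinals and the provable lower bounds, they realize every admissible $\{\aleph_1,\aleph_2\}$-valued pattern. First I would observe that $\mfa_e,\mfa_p,\mfd,\mfa_T$ are all at most $2^{\aleph_0}$, so in a model of $2^{\aleph_0}=\aleph_2$ each of them takes a value in $\{\aleph_1,\aleph_2\}$; the only constraints we must respect are $\mfd\le\mfa_T$ (Fact~1(4)) and the assumption $\mfa_e=\mfa_p$. Under these constraints the admissible patterns, written as a pair $(\mfa_e=\mfa_p,\ \mfd,\ \mfa_T)$, are exactly the six tuples with $\mfd\le\mfa_T$ and each coordinate in $\{\aleph_1,\aleph_2\}$: $(\aleph_1,\aleph_1,\aleph_1)$, $(\aleph_1,\aleph_1,\aleph_2)$, $(\aleph_1,\aleph_2,\aleph_2)$, $(\aleph_2,\aleph_1,\aleph_1)$, $(\aleph_2,\aleph_1,\aleph_2)$, $(\aleph_2,\aleph_2,\aleph_2)$.

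Next I would match each pattern to a model already produced. The all-$\aleph_1$ pattern $(\aleph_1,\aleph_1,\aleph_1)$ holds under \CH. The pattern $(\aleph_1,\aleph_1,\aleph_2)$, i.e. $\mfa_e=\mfa_p=\mfd<\mfa_T$, is Theorem~\ref{partitionpreservation2} (the iterated partition forcing model). The pattern $(\aleph_1,\aleph_2,\aleph_2)$, i.e. $\mfa_e=\mfa_p<\mfd=\mfa_T$, is Theorem~\ref{miller2} (the iterated Miller model). The pattern $(\aleph_2,\aleph_1,\aleph_1)$, i.e. $\mfa_T=\mfd<\mfa_e=\mfa_p$, is Fact~\ref{randommodel} (the random model). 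The pattern $(\aleph_2,\aleph_1,\aleph_2)$, i.e. $\mfd<\mfa_T=\mfa_e=\mfa_p$, is Proposition~\ref{randommodel2}. Finally $(\aleph_2,\aleph_2,\aleph_2)$ holds in, say, the Cohen model over a model of \CH, or indeed in the random model iterated to length $\omega_2$ in the usual way; in any model where $non(\Me)=\mfd=2^{\aleph_0}=\aleph_2$ we get all four equal to $\aleph_2$ by Fact~1(1)--(2),(4) and the bound $\mfa_T\le 2^{\aleph_0}$. This exhausts all six admissible patterns, which proves the first assertion.

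For the "in particular" clause, independence of $\mfa_e$ (equivalently $\mfa_p$) from $\mfd$ follows from the patterns $(\aleph_1,\aleph_2,\ast)$ (so $\mfa_e<\mfd$ is consistent, via the Miller model) and $(\aleph_2,\aleph_1,\ast)$ (so $\mfd<\mfa_e$ is consistent, via the random model); independence from $\mfa_T$ follows from $(\aleph_1,\ast,\aleph_2)$ (partition forcing model) and $(\aleph_2,\ast,\aleph_1)$ (random model). I do not expect any genuine obstacle here: the content is entirely in the four model constructions, which we may assume, and the remaining work is the bookkeeping of checking that the six patterns listed are precisely the admissible ones and that each is witnessed. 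The only mild subtlety is making sure that in each cited model $2^{\aleph_0}=\aleph_2$ so that "the cardinal equals $\aleph_2$" and "the cardinal equals $2^{\aleph_0}$" coincide — this is standard for all of the $\omega_2$-length countable support iterations and finite support c.c.c.\ iterations involved, so I would simply remark on it and conclude.
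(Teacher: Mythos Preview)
Your proposal is correct and takes essentially the same approach as the paper: the paper simply points to Fact~\ref{randommodel}, Proposition~\ref{randommodel2}, and Theorems~\ref{miller2} and~\ref{partitionpreservation2} as collectively establishing the corollary, and you have spelled out the bookkeeping in full, adding the trivial all-$\aleph_1$ (\CH) and all-$\aleph_2$ cases that the paper leaves implicit.
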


\section{$h$-Perfect Trees and the consistency of $\mfa_e = \mfa_p = \mathfrak{u} = \mathfrak{d} < non(\Null) = cof(\Null)$}
Recall that for a function $h:\omega \to \omega$ with $1 < h(n) < \omega$ for all $n < \omega$, the forcing notion $\mathbb{PT}_h$, sometimes called $h$-perfect tree forcing, consists of trees $p \subseteq \omega^{<\omega}$ so that the following hold:
\begin{enumerate}
\item
For all $t \in p$ and all $l \in {\rm dom}(t)$ we have $t(l) < h(l)$.
\item
Every $t \in p$ has either one or $h(l(t))$-many immediate successors in $T$. 
\item
For every $t \in p$ there is a $t ' \supseteq t$ with $t' \in p$ and there are $h(l(t'))$ many immediate successors of $t'$ in $p$.
\end{enumerate}

This forcing notion was first considered in \cite{SMZnoCohen}. For simplicity here we will restrict our attention to the case $h(n) = 2^n$ for all $n<\omega$. Obviously much more can be said but for our purposes this is not necessary. In \cite{SMZnoCohen} the following is shown.

\begin{fact}[\cite{SMZnoCohen}]
Denote by $\mathbb{PT}_{2^n}$ the forcing notion $\mathbb{PT}_h$ for $h(n) = 2^n$ for all $n < \omega$. The following hold.
\begin{enumerate}
\item
$\mathbb{PT}_{2^n}$ is proper, and in fact satisfies Axiom A.
\item
$\mathbb{PT}_{2^n}$ is $\baire$-bounding.
\item
$\mathbb{PT}_{2^n}$ preserves $P$-points.
\item
$\mathbb{PT}_{2^n}$ makes the ground model reals measure zero.
\end{enumerate}
\end{fact}

It follows from this that a countable support iteration of $\mathbb{PT}_{2^n}$ over a model of $\CH$ will force $\aleph_1 = \mathfrak{u} = \mathfrak{d} < non(\Null) = cof(\Null) = 2^{\aleph_0} = \aleph_2$. Will will show that $\mathbb{PT}_{2^n}$ strongly preserves the tightness of all tight eventually different families and tight eventually different families of permutations.

\begin{theorem}
Let $\calE$ be a tight eventually different family. The forcing notion $\mathbb{PT}_{2^n}$ strongly preserves the tightness of $\calE$.
\label{PTH1}
\end{theorem}

The proof of this theorem is nearly verbatim to the analogous result for Miller forcing, Theorem \ref{miller}. We include it below for completeness. First we adapt some terminology from other arboreal forcings. If $p \in \mathbb{PT}_{2^n}$ and $n < \omega$ then a node $t \in p$ is an ${n}^{\rm th}${\rm -splitting node} if it has $2^{l(t)}$ many immediate successors and it has the $n - 1$  predecessors with this property. Denote by ${\rm Split}_n(p)$ the set of $n$-splitting nodes. We say that for two $2^n$-perfect trees $p, q \in\mathbb{PT}_{2^n}$ that $q \leq_n p$ if $q \leq p$ and for all $i < n + 1$ ${\rm Split}_i(p) ={\rm Split}_i(q)$. 

\begin{proof}
 Let $p\in \mathbb{PT}_{2^n}$ be a condition, let $M\prec H_\theta$ countable with $\theta$ sufficiently large, $p, \mathbb{PT}_{2^n}, \calE \in M$. Let $g \in \calE$ densely diagonalize every $T \in M \cap \mathcal I_T(\calE)^+$. Let $\{T_n \; | \; n < \omega\}$ be an enumeration of all $\mathbb{PT}_{2^n}$ names in $M$ for trees in $\mathcal I_T(\calE)^+$. Let $\{D_n \; | \; n < \omega\}$ enumerate all dense open subsets of $\mathbb{PT}_{2^n}$ in $M$. Let $\phi:\omega \to \omega^2$ be a bijection with coordinate maps $\phi_0$ and $\phi_1$. Inductively we will construct sequences $\{p_n \; | \; n < \omega\}$, $\{\dot{t}_n \; | \; n < \omega\}$, $\{\dot{k}_n \; | \; n < \omega\}$ so that the following hold.
\begin{enumerate}
\item
$p_0 = p$
\item
$\{p_n \; |\; n < \omega\} \subseteq M$ and for all $n <\omega$ $p_{n+1} \leq_{n+1} p_n$
\item
For each $n$ $\dot{t}_n$ is a $\mathbb{PT}_{2^n}$ name in $M$ for a node in $\dot{T}_{\phi_0(n)}$ extending the $\phi_1(n)^{\rm th}$-node in $\dot{T}_{\phi_0(n)}$
\item
For each $n < \omega$ $\dot{k}_n$ is a name for an element of $\omega$ in $M$
\item
For each ${n+1}^{\rm st}$-splitting node $t$ of $p_{n+1}$ we have that $(p_{n+1})_t \in D_n$, and forces for some $s_n \in \omega^{<\omega}$ that the $\phi_1(n)^{\rm th}$-node in $\dot{T}_{\phi_0(n)}$ is $\check{s}_n$ and $(p_{n+1})_t \forces \dot{k}_n \in {\rm dom}(\dot{t}_n) \setminus {\rm dom}(\check{s}_n) \land \dot{t}_n(\dot{k}_n) = \check{g} (k)$.
\end{enumerate}

Obviously if such a family of sequences can be constructed then, letting $q = \bigcap_{n < \omega} p_n$, it is clear that $q$ is $(M, \mathbb{PT}, \calE, g)$-generic as needed.

Therefore it remains to construct the requisite sequence. This is done by induction. Let $p_0 = p$. Now assume that $\{p_j \; | \; j < n+1\}$, $\{\dot{t}_j \; | \; j < n\}$ and $\{\dot{k}_j \; | \; j < n\}$ have been defined. We will define $p_{n+1}$, $\dot{t}_n$ and $\dot{k}_n$. For each ${n+1}^{\rm st}$-splitting node $t$ of $p_n$, let $q_t \leq (p_n)_t$ be an element of $M \cap D_n$ which forces for some $s_n \in \omega^{<\omega}$ that the $\phi_1(n)^{\rm th}$-node in $\dot{T}_{\phi_0(n)}$ is $\check{s}_n$. Let $W_t = \{u \supsetneq s_n \; | \; q_t \nVdash u \notin \dot{T}_{\phi_0(n)}\}$. As in the proof of Lemma \ref{preservation} $W_t$ alongside $s_n$ and its predecessors is a tree in $\mathcal I_T(\calE)^+ \cap M$ hence it is densely diagonalized by $g$. Therefore we can find a $u_t \in W_t$ and a $k_t \in {\rm dom}(u_t) \setminus {\rm dom}(s_n)$ so that $u_t(k_t) = g(k_t)$. Let $r_t \leq q_t$ force that $\check{u_t} \in \dot{T}_{\phi_0(n)}$. Now let $p_{n+1} = \bigcup_{t \in {\rm Split}_{n+1}(p_n)} r_t$. Let $\dot{t}_n = \{(\check{u}_t, r_t) \; | \; t \in {\rm Split}_{n+1}(p_n)\}$ and $\dot{k}_n =\{(\check{k}_t, r_t) \; | \; t \in {\rm Split}_{n+1}(p_n)\}$. Clearly these suffice.

\end{proof}

As always the analgous result for permutations is proved in an almost identical way.

\begin{theorem}
Let $\calP$ be a tight eventually different family of permutations. The forcing notion $\mathbb{PT}_{2^n}$ strongly preserves the tightness of $\calP$.
\label{PTH2}
\end{theorem}

\begin{remark}
Theorems \ref{PTH1} and \ref{PTH2} hold regardless of which $h$ is chosen. It follows that an iteration of $\mathbb{PT}_h$ forcing notions where many different $h$'s are chosen, including the case of Miller forcing $(h(n) = \omega$ for all $n < \omega)$, as is done in \cite[Theorem 3.9]{SMZnoCohen}, will strongly preserve tight eventually different families and tight eventually different families of permutations. In particular the existence of tight eventually different families of functions and permutations of size $\aleph_1$ are consistent with the additivity of the strong measure zero ideal being $\aleph_2 = 2^{\aleph_0}$, even when no Cohen reals are added.
\end{remark}

Now we can conclude from Theorems \ref{PTH1} and \ref{PTH2} the following.

\begin{theorem}
Let $\P$ be the $\omega_2$-length countable support iteration of $\mathbb{PT}_{2^n}$ and let $G \subseteq \P$ be generic over $V$. In $V[G]$ we have $\mfa_e = \mfa_p = \mfa = \mfd = \mathfrak{u} < non(\Null) = cof(\Null)$. 
\end{theorem}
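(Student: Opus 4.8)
The plan is to take $V \models \CH$ as the ground model, let $\P = \P_{\omega_2}$ be the countable support iteration of $\mathbb{EE}$ of length $\omega_2$, and verify in $V[G]$ first that $2^{\aleph_0} = \aleph_2$, then that each of $\mfa_e,\mfa_p,\mfa,\mfd,\mathfrak{u}$ is $\aleph_1$ and $non(\Null) = cof(\Null) = \aleph_2$. The value of the continuum and the $\aleph_2$-c.c.\ of $\P$ are the standard bookkeeping facts for a countable support iteration of proper forcings of size $\le\mathfrak c$ over a model of $\CH$: one checks by induction that $|\P_\alpha|\le\aleph_1$ for $\alpha<\omega_2$, which yields the $\aleph_2$-c.c., and since $\mathbb{EE}$ adds reals one gets $2^{\aleph_0}=\aleph_2$. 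Since all five combinatorial invariants are always $\ge\aleph_1$, it suffices in each case to exhibit a witness of size $\aleph_1$ that survives into $V[G]$.

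For the four invariants on the left I would fix the witnesses already in $V$. Under $\CH$ we have $\mfb = 2^{\aleph_0}$, so $V$ contains a tight MAD family $\calA_0$ by \cite{orderingMAD}, a tight eventually different family $\calE_0$ and a tight eventually different set of permutations $\calP_0$ by Theorems~\ref{construction} and~\ref{MA.tight}, and a P-point $\mathcal U_0$. By the preservation results of this section, $\mathbb{EE}$ strongly preserves the tightness of each of $\calE_0$, $\calP_0$, $\calA_0$; applying Theorem~\ref{preservation} to $\calE_0$, Theorem~\ref{preserve2} to $\calP_0$, and \cite[Corollary 32]{restrictedMADfamilies} to $\calA_0$, we conclude that $\calE_0$ and $\calP_0$ remain tight, hence maximal by Propositions~\ref{if.tight.then.max} and~\ref{if.tight.then.maximal}, and $\calA_0$ remains MAD, in $V[G]$. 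This gives $\mfa_e = \mfa_p = \mfa = \aleph_1$. For $\mathfrak u$ I would invoke that $\mathbb{EE}$ preserves P-points (\cite[Lemma 7.4.15]{BarJu95}) together with the standard preservation theorem for P-points under countable support iterations of proper forcing, so that $\mathcal U_0$ still generates an ultrafilter in $V[G]$, whence $\mathfrak u = \aleph_1$. For $\mfd$ I would use that $\mathbb{EE}$ is $\baire$-bounding and that countable support iterations of $\baire$-bounding proper forcings are $\baire$-bounding, so no dominating real is added and the ground model reals (a dominating family of size $\aleph_1$ under $\CH$) still dominate; hence $\mfd = \aleph_1$.

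For the right-hand side I would use that a single step of $\mathbb{EE}$ makes the old reals null (\cite[Section 7.4.C]{BarJu95}). Given any $A \subseteq \cantor$ in $V[G]$ with $|A|=\aleph_1$, the usual reflection argument for proper $\aleph_2$-c.c.\ iterations over a model of $\CH$ places $A$ in some $V[G_\beta]$ with $\beta<\omega_2$ (each real appears at a bounded stage, and $\cf(\omega_2)>\aleph_1$); then in $V[G_{\beta+1}]$ the set $A \subseteq \cantor^{V[G_\beta]}$ is covered by a Borel null set coded at stage $\beta+1$, and by absoluteness of ``this Borel code names a null set'' that set is still null in $V[G]$, so $A$ is null in $V[G]$. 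Hence no set of size $\aleph_1$ is non-null, i.e.\ $non(\Null) = \aleph_2$, and from $non(\Null)\le cof(\Null)\le 2^{\aleph_0}=\aleph_2$ we also get $cof(\Null)=\aleph_2$. Putting the two sides together yields $\mfa_e = \mfa_p = \mfa = \mfd = \mathfrak u = \aleph_1 < \aleph_2 = non(\Null) = cof(\Null)$.

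I expect the main point to watch is not any individual preservation statement but the fact that all of them must hold of one and the same iteration: one needs $\P$ to simultaneously strongly preserve the tightness of $\calE_0$, $\calP_0$ and $\calA_0$, preserve the P-point $\mathcal U_0$, and be $\baire$-bounding. This is painless here because the iteration has no freedom at all — every iterand is $\mathbb{EE}$, and each of these is an iterable preservation property holding of $\mathbb{EE}$ — so the only genuinely delicate ingredient is the routine one invoked in the last paragraph, namely verifying $|\P_\alpha|\le\aleph_1$ for $\alpha<\omega_2$ under $\CH$ so that the reflection of $\aleph_1$-sized sets of reals to bounded stages is valid.
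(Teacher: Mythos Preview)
Your argument is correct and follows exactly the approach the paper intends: the paper does not give a detailed proof here, but simply writes ``Given this we can show as in the previous sections the following. Note that $\mathbb{EE}$ preserves P-points,'' leaving the reader to assemble precisely the ingredients you list (strong preservation of tight $\calE_0$, $\calP_0$, $\calA_0$; $\baire$-bounding; P-point preservation; $\mathbb{EE}$ making the ground model reals null). Your write-up is in fact considerably more explicit than the paper's, but the route is the same.
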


\section{Shelah's Forcing $\Q_\mathcal I$ and the consistency of $\mfa_e = \mfa_p = \mathfrak{i} < \mathfrak{u}$}
In this section we show that Shelah's forcing $\Q_\mathcal I$ for proving the consistency of $\mathfrak{i} < \mathfrak{u}$ strongly preserves tight eventually different families and tight eventually different sets of permutations. As a result we obtain the consistency of $\mfa = \mfa_e = \mfa_p = \mfd = \mathfrak{i} = cof(\Null) < \mathfrak{u}$. First we recall the poset $\Q_\calI$ from \cite{SS1} and some of its properties. The exposition in this section strongly mirrors that of \cite[Section 5]{IndependentCompact}, where it is shown that $\Q_\calI$ strongly preserves tight MAD families.

%

We start, by recalling some terminology from \cite{SS1}, see also \cite{IndependentCompact}. Given an ideal $\calI$ on $\omega$, we say that 
an equivalence relation $E$ on a subset of $\omega$ is an $\calI$-equivalence relation if ${\rm dom}(E) \in \calI^*$ (the dual filter) and each $E$-equivalence class is in $\calI$. For $\calI$-equivalence relations $E_1$ and $E_2$, we define $E_1 \leq_\calI E_2$ if ${\rm dom}(E_1) \subseteq {\rm dom}(E_2)$ and every $E_1$ equivalence class is the union of a set of $E_2$ equivalence classes. Moreover, we will make use of the notion of a $A$-$n$-determined function. More precisely: Given a subset $A$ of $\omega$, we say that a function  $g:2^A \to 2$ is $A$-$n$-determined  if there is a set $a\subseteq A\cap n+1$ such that whenever  $\eta \hook a = \nu \hook a$ for $\eta,\nu$ in ${^A\omega}$, we have  $g(\eta) = g(\nu)$. For each $i\in A$, $g_i$ is the function mapping $\eta\in {^A 2}$ to $\eta(i)$. The following claim appears in~\cite{SS1}: If $g$ is a $A$-$n$-determined function,  then $g=\varphi(g_0,...,g_n)$, where $\varphi(g_0,...,g_n)$ which is obtained as a maximum, minimum and complement (e.g. $1- g_i$) of $g_0, ..., g_n$ and the constant functions on $0$ and $1$. Again following~\cite{SS1}, given an $\calI$-equivalence relation $E$, we denote by $A = A(E) = \{x \; | \; x \in {\rm dom}(E) \; {\rm and} \; x = {\rm min}[x]_E\}$.

\begin{definition}[The conditions of $\Q_\calI$]
Let $\calI$ be an ideal on $\omega$. Define $\Q_\calI$ to be the set of $p = (H^p, E^p)$ where
\begin{enumerate}
\item
$E^p$ is an $\calI$-equivalence relation,
\item
$H^p$ is a function with domain $\omega$ so that for each $n$ $H(n)$ is $A(E)$-$n$-determined,
\item
if $n \in A(E^p)$ then $H^p(n) = g_n$,
\item
if $n \in {\rm dom}(E^p) \setminus A(E^p)$ and $n E i$ for an $i \in A(E^p)$ then $H^p(n)$ is either $g_i$ or $1-g_i$.
\end{enumerate}
\end{definition}
For a condition $p \in \Q_\calI$ let $A^p = A(E^p)$. Before we can define the order on $\Q_\calI$ we need one more definition, again appearing in~\cite{IndependentCompact}: For $p, q \in \Q_\calI$ with $A^p \subseteq A^q$, we write $H^p(n) =^{**} H^q(n)$ if for each $\eta \in 2^{A^p}$ we have $H^p(n)(\eta) = H^q(n)(\eta ' )$ where

\begin{center}
$\eta ' (j) = 
\begin{cases}

\eta(j) & j \in A^p \\

H^p(j)(\eta) & j \in A^q \setminus A^p \\

\end{cases}
$
	\end{center}

Now we can define the order on $\Q_\calI$.
\begin{definition}
Let $p, q \in \Q_\calI$. We let $p \leq q$ if 
\begin{enumerate}
\item
$E^p \leq_\calI E^q$
\item
if $H^q(n) = g_i$ for $n \in {\rm dom}(E^q)$ then $H^p(n) = H^p(i)$,
\item
if $H^q(n)  = 1 - g_i$ for $n \in {\rm dom}(E^q)$ then $H^p(n) =^{**} H^q(n)$,
\item
if $n \in \omega \setminus {\rm dom}(E^q)$ then $H^p(n) =^{**} H^q(n)$.
\end{enumerate}
Finally let $p \leq_n q$ if $p \leq q$ and $A^p$ contains the first $n$ elements of $A^q$.
\end{definition}

If $\calI$ is a maximal ideal $\calI$ then the forcing notion $\Q_\calI$ is proper \cite[Claim 1.13]{SS1}. Moreover it has the Sacks property \cite[Claim 1.12]{SS1} and hence is $\baire$-bounding. Forcing with $\Q_\calI$ kills the maximality of $\calI$ \cite[Claim 1.5]{SS1}. As a result, using a bookkeeping device to ensure we cover all possible $\calI$'s, iteratively forcing with $\Q_\calI$ makes $\mathfrak{u} = \aleph_2$.

\begin{lemma} 
Let $p \in \Q_\calI$. For an initial segment $u$ of $A^p$ and $h:u \to 2$ let $p^{[h]}$ be the pair $q = (H^q, E^q)$ defined by letting, for each $n \in \omega$ $H^q(n) = \varphi(g_0, ..., g_i/h(i), ..., g(n))$ where $H^p(n) = \varphi(g_0, ..., g_n)$ and the substitution is done only for $i \in u$ and $E^q = E^p \hook \bigcup\{[i]_{E^p} \; | \; i \in A^p\setminus u\}$.
\begin{enumerate}
\item \cite[Claim 1.7, (2)]{SS1}
$p^{[h]}$ is a condition in $\Q_\calI$ extending $p$ and the set of $\{p^{[h]} \; | \; h \in 2^u\}$ is predense below $p$.
\item \cite[Claim 1.8]{SS1}.
If $u$ is a the set of the first $n$ elements of $A^p$ and $D$ is a dense subset of $\Q_\calI$ then there is a $q \in \Q_\calI$ so that $q \leq_n p$ and $q^{[h]} \in D$ for any $h \in 2^u$.
\end{enumerate}
\label{densityQI}
\end{lemma}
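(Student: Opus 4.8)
The plan is to reduce everything to the combinatorics of $\Q_\calI$ already recorded in \cite{SS1}: both clauses are, in fact, \cite[Claims~1.7--1.8]{SS1}. Still, it is worth isolating why the argument is structurally identical to the $\mathbb{EE}$-fusion behind Lemma~\ref{gapclosure}, since the same template will be reused when we run the preservation argument for $\Q_\calI$.

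For part (1) I would first verify, by direct inspection of the four defining clauses of $\Q_\calI$, that $q=p^{[h]}$ is a condition. The only non-formal points are: (i) ${\rm dom}(E^q)={\rm dom}(E^p)\setminus\bigcup\{[i]_{E^p}\;|\;i\in u\}$ is obtained from a set in $\calI^*$ by deleting a finite union of $\calI$-classes, hence stays in $\calI^*$, so $E^q$ is again an $\calI$-equivalence relation and $A(E^q)=A^p\setminus u$; and (ii) replacing each $g_i$ $(i\in u)$ by the constant $h(i)$ inside $H^p(n)=\varphi(g_0,\dots,g_n)$ produces a function depending only on coordinates in $(A^p\cap(n+1))\setminus u\subseteq A(E^q)\cap(n+1)$, so $H^q(n)$ is still $A(E^q)$-$n$-determined; clauses (3)--(4) of the definition survive because the coordinates they mention lie in $A^p\setminus u$, which $h$ leaves untouched. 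Checking $q\leq p$ is then immediate from the four clauses of the order: in clause (2), if $H^p(n)=g_i$ with $i\in u$ then both $H^q(n)$ and $H^q(i)$ are the constant $h(i)$, and if $i\notin u$ they both equal $g_i$; clauses (3)--(4) hold by construction. For predensity below $p$ I would argue semantically: if $G$ is generic with $p\in G$, a genericity argument (each $H^r(i)$ for $r\in G$ is $A^r$-finitely-determined, so some extension of $r$ forces it to a constant) shows that $G$ determines a value $h_G(i)\in2$ for every $i\in u$, whence $p^{[h_G]}\in G$; since every $r\leq p$ lies in some such $G$, the finite family $\{p^{[h]}\;|\;h\in2^u\}$ is predense below $p$ (equivalently, these conditions are pairwise incompatible and their Boolean values sum to that of $p$).

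For part (2) I would imitate the proof of Lemma~\ref{gapclosure}. With $u$ the set of the first $n$ elements of $A^p$, enumerate $2^u$ as $\{h_0,\dots,h_{k-1}\}$ with $k=2^n$, and build a finite $\leq_n$-decreasing chain $p=q_0$, $q_{l+1}\leq_n q_l$, $q:=q_k$, so that $q_{l+1}^{[h_l]}\in D$ for each $l<k$: at stage $l$ pick $r_l\leq q_l^{[h_l]}$ in $D$, and then ``unfreeze'' along $u$, i.e.\ let $q_{l+1}\leq_n q_l$ be obtained from $r_l$ by reinstating the classes of the elements of $u$ together with the $H$-values $q_l$ assigned on them. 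Because $r_l\leq q_l^{[h_l]}$ and $q_l^{[h_l]}$ came from $q_l$ via the $(\cdot)^{[h_l]}$-operation, this reinstatement yields a legitimate condition whose $A$ still begins with the first $n$ elements of $A^p$; hence $u$ remains an initial segment of $A^{q_{l+1}}$, the operations $(\cdot)^{[h_j]}$ stay unambiguous, $q_{l+1}^{[h_l]}\leq r_l\in D$, and $q_{l+1}^{[h_j]}\leq r_j$ is preserved for $j<l$ since step $l$ altered only coordinates invisible to $(\cdot)^{[h_j]}$ once $h_j$ is plugged in. Then $q=q_k$ is as required.

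I expect the main obstacle to be exactly this reassembly step: verifying that the lift-back $q_{l+1}\leq_n q_l$ is a genuine $\Q_\calI$-condition that does not disturb the previously treated branches $h_j$ $(j<l)$, and, in part (1), the matching bookkeeping that $G$ really pins down a value on each coordinate of $u$ even when the relevant $E^p$-class is absorbed into a larger class lower down. This is precisely what is carried out in \cite[Claims~1.7--1.8]{SS1}, by checking the four clauses of the order on $\Q_\calI$ coordinate by coordinate while tracking which coordinates are active, slaved, or frozen, and then invoking the Axiom~A structure (the $\leq_n$-relations) to conclude that the finite intersection is again a condition.
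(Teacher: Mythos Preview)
Your proposal is correct and in fact goes beyond what the paper does: the paper gives no proof of this lemma at all, simply citing \cite[Claims~1.7--1.8]{SS1}, which is exactly what you do before adding a sketch. Your outline of both parts---checking the four clauses of the definition and of the order for (1), and the finite $\leq_n$-fusion modeled on Lemma~\ref{gapclosure} for (2)---is a faithful summary of Shelah's argument, and your identification of the reassembly step as the only delicate point is accurate.
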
 

Finally we will need the following game to analyze fusions of conditions from $\Q_\calI$.

\begin{definition}[The Game $\mathsf{GM}_\calI(E)$]
The game $\mathsf{GM}_\calI(E)$ is played as follows. On the $n^{\rm th}$-move, the first player chooses an $\calI$-equivalence relation $E^1_n \leq_\calI E^2_{n-1}$ ($E^1_0 = E$), and the second played chooses an $\calI$-equivalence relation $E^2_n \leq_\calI E^1_n$. After $\omega$ many moves the second player wins if and only if $\bigcup_{n > 0} {\rm dom}(E^2_{n-1}) \setminus {\rm dom}(E^1_n) \in \calI$.
\end{definition}

\begin{remark}
Note that if some play of the game $\mathsf{GM}_\calI(E)$ is given where player II wins playing $\{E^2_n\}_{n < \omega}$ then player II also wins the play where for each $n < \omega$ they play instead some $\{E^{2, *}_n\}_{n < \omega}$ with $E_{n+1}^1 \leq_\mathcal I E^{2, *}_n \leq_\mathcal I E^2_n$. 
\label{game-second}
\end{remark}

\begin{lemma}[Claim 1.10 (1) of \cite{SS1}]
If $\calI$ is a maximal ideal then player I has no winning strategy in the game $\mathsf{GM}_\calI(E)$.
\end{lemma}

Putting all of this together we can now show the following.
\begin{theorem}
For any maximal ideal $\calI$ and any tight eventually different family $\calE$ the forcing notion $\Q_\calI$ strongly preserves the tightness of $\calE$.
\end{theorem}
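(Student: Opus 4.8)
The plan is to follow the template established for Miller forcing in Theorem~\ref{miller} (and for infinitely often equal forcing), with the conditions $(q)^{[h]}$ of Lemma~\ref{densityQI} playing the role of the splitting/gap-closure structure and the game $\mathsf{GM}_\calI(E)$ controlling the fusion. Fix $p \in \Q_\calI$, a sufficiently large $\theta$, a countable $M \prec H_\theta$ with $p, \Q_\calI, \calI, \calE \in M$, and $g \in \calE$ densely diagonalizing every $T \in M \cap \mathcal I_T(\calE)^+$. Working inside $M$, enumerate the dense open subsets of $\Q_\calI$ as $\{D_n \mid n < \omega\}$, the $\Q_\calI$-names for trees in $\mathcal I_T(\calE)^+$ as $\{\dot T_n \mid n < \omega\}$, and fix a bijection $\phi\colon \omega \to \omega^2$ with coordinate maps $\phi_0,\phi_1$. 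I would build a fusion sequence $\{p_n \mid n < \omega\} \subseteq M$ with $p_0 = p$ and $p_{n+1} \leq_{n+1} p_n$, together with $\Q_\calI$-names $\dot t_n,\dot k_n$, such that, writing $u_{n+1}$ for the set of the first $n+1$ elements of $A^{p_{n+1}}$: for each $h \in 2^{u_{n+1}}$ the condition $(p_{n+1})^{[h]}$ lies in $D_n$, decides the $\phi_1(n)^{\rm th}$ node of $\dot T_{\phi_0(n)}$ to be some $s(h) \in \omega^{<\omega}$, forces $\dot t_n$ to be a node of $\dot T_{\phi_0(n)}$ with $s(h) \subsetneq \dot t_n$, and forces $\dot k_n \in {\rm dom}(\dot t_n) \setminus {\rm dom}(s(h))$ with $\dot t_n(\dot k_n) = \check g(\dot k_n)$. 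Granting such a construction, the natural fusion $q$ of the $p_n$'s (built as in \cite{IndependentCompact}, taking the eventual value of the $H^{p_n}$ and the intersection of the $E^{p_n}$) is $(M,\Q_\calI)$-generic by the usual argument with the $D_n$'s, and the names $\dot t_n,\dot k_n$ witness that $q$ forces $g$ to densely diagonalize every $\dot T_m$ that lies in $M[\dot G]$; hence $q$ is $(M,\Q_\calI,\calE,g)$-generic.

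The successor step combines three moves. First, applying Lemma~\ref{densityQI}(2) to the dense set of conditions that lie in $D_n$ \emph{and} decide the $\phi_1(n)^{\rm th}$ node of $\dot T_{\phi_0(n)}$ (this set is dense since $D_n$ is dense open), I would pass to a $q^* \leq_{n+1} p_n$ in $M$ such that, for $u$ the set of the first $n+1$ elements of $A^{q^*}$, every $(q^*)^{[h]}$ with $h \in 2^u$ lies in $D_n$ and decides that node to be some $s(h)$. Second — this is the only place tightness of $\calE$ enters — for each of the finitely many branches $h \in 2^u$ I would form the outer hull tree $W(h) = \{\, t \supsetneq s(h) \mid (q^*)^{[h]} \nVdash \check t \notin \dot T_{\phi_0(n)} \,\}$; exactly as in the proof of Lemma~\ref{limitstep}, $W(h)$ together with $s(h)$ and its initial segments is a tree in $\mathcal I_T(\calE)^+ \cap M$, so $g$ densely diagonalizes it, yielding $t(h) \in W(h)$ and $k(h) \in {\rm dom}(t(h)) \setminus {\rm dom}(s(h))$ with $t(h)(k(h)) = g(k(h))$. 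Third, I would strengthen each $(q^*)^{[h]}$ to some $r(h)$ forcing $\check t(h) \in \dot T_{\phi_0(n)}$, paste the finitely many $r(h)$ back into a single $p_{n+1} \leq_{n+1} q^*$ (using the predensity of $\{(q^*)^{[h]} \mid h \in 2^u\}$ from Lemma~\ref{densityQI}(1)), and let $\dot t_n,\dot k_n$ be names reading $t(h),k(h)$ off the branch $h$.

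The point I expect to be the main obstacle is showing that the fusion $q$ is a genuine condition of $\Q_\calI$, i.e.\ that $\mathrm{dom}(E^q) \in \calI^*$ with all classes in $\calI$. The passages $p_n \rightsquigarrow p_{n+1}$ shrink the domain only by the $E^{p_n}$-classes of the elements of $A^{p_n} \setminus A^{p_{n+1}}$, so the sequence $E^{p_n}$ is, up to subsets, a play of $\mathsf{GM}_\calI(E^p)$; since this game is not determined for the maximal ideal $\calI$ (so neither player has a winning strategy), I would interleave the bookkeeping above with a play in which player II wins, giving $\bigcup_n (\mathrm{dom}(E^{p_n}) \setminus \mathrm{dom}(E^{p_{n+1}})) \in \calI$ and hence $\mathrm{dom}(E^q) = \bigcap_n \mathrm{dom}(E^{p_n}) \in \calI^*$ — Remark~\ref{game-second} ensures this is unharmed by the subset-passing implicit in the construction. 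The genuinely new content relative to the already-recorded $\Q_\calI$-machinery is only the verification that this game-driven choice of the $E^{p_n}$ is compatible with the finitely-many-branches bookkeeping for $\dot t_n,\dot k_n$ at each stage; this runs parallel to \cite[Section~5]{IndependentCompact}, with the step ``$g$ densely diagonalizes the outer hull $W(h)$'' substituting for the corresponding step in the tight MAD family argument there.
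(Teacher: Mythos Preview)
Your proposal follows essentially the same architecture as the paper: a game-controlled fusion where at each stage one uses Lemma~\ref{densityQI} to handle the finitely many branches $(q)^{[h]}$, the outer-hull tree $W(h)$ to invoke tightness of $\calE$, and the non-determinacy of $\mathsf{GM}_\calI(E)$ to guarantee the fusion is a condition. The one place where your sketch diverges from the paper is the ordering of moves at the successor step, and this creates a small gap.

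You apply Lemma~\ref{densityQI}(2) first to get $q^*$ with each $(q^*)^{[h]}$ in $D_n$ and deciding the node, then form $W(h)$, pick witnesses $t(h),k(h)$, strengthen each $(q^*)^{[h]}$ separately to some $r(h)$, and finally ``paste the finitely many $r(h)$ back into a single $p_{n+1}\leq_{n+1} q^*$.'' That last amalgamation is not justified by the lemmas you cite: predensity of the $(q^*)^{[h]}$ below $q^*$ does not by itself give you a way to glue arbitrary extensions $r(h)\leq (q^*)^{[h]}$ back into a single $\leq_{n+1}$-extension of $q^*$. Unlike Miller or Sacks forcing, $\Q_\calI$-conditions are not subtrees that one can simply union; the pairs $(H,E)$ require a genuine amalgamation argument (this is essentially the content of the \emph{proof} of Claim~1.8 in \cite{SS1}, which you would be reproving).

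The paper sidesteps this entirely by reversing the order: it first defines a single dense open set $D'_n$ consisting of those $r$ that already decide the node \emph{and} force a witness $t\supsetneq s_n$ with $g(k)=t(k)$ for some $k\in{\rm dom}(t)\setminus{\rm dom}(s_n)$; your outer-hull argument is exactly what shows $D'_n$ is dense. Then a \emph{single} application of Lemma~\ref{densityQI}(2) to $D'_n\cap D_n$ produces $p_{n+1}\leq_{n+1} q_{n+1}$ with every $p_{n+1}^{[h]}\in D'_n\cap D_n\cap M$, and no pasting is needed. Folding the diagonalization witness into the dense set \emph{before} invoking the lemma is the clean fix; with that change your proof goes through and matches the paper's.
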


\begin{proof}
Let $p\in\Q_\calI$, $M$ a countable elementary submodel of $H_\theta$ for $\theta$ sufficiently large such that $\calI, \calE, p\in M$ and let $g\in\calE$ densely diagonalize every $T\in\calI_T(\calE)^+\cap M$. We fix an enumeration $\{D_n \; | \; n\in\omega\}$ of all open dense subsets of~$\Q_\calI$ that are in $M$, and an enumeration $\{\dot{T}_n \; | \; n\in\omega\}$ of all $\Q_\calI$-names for elements of $\calI_T(\calE)^+$ that are in $M$. Let $\phi:\omega \to \omega^2$ with coordinate functions $\phi_0$ and $\phi_1$.
\par
We define a strategy for the first player in the game $\mathsf{GM}_\calI(E)$, which cannot be winning in all rounds. 
\par
We set $p_0=q_0=p$ and $u_0=\emptyset$. We assume that the~first player has chosen~$E^1_n$, $q_n$, $p_n$, $u_n$, and the second one an $E^2_n$. We give instructions to choose $E^1_{n+1}$, $q_{n+1}$, $p_{n+1}$, $u_{n+1}$. We begin with $q_{n+1}$:
\begin{enumerate}
    \item ${\rm dom} (E^{q_{n+1}})={\rm dom} (E^{p_n})$,
    \item $xE^{q_{n+1}}y$ if and only if one of the following holds:
    \begin{enumerate}
        \item 
$xE^2_ny$.
        \item 
There is $k\in u_n$ with $x,y\in[k]_{E^{p_n}}$ and $x,y\not\in{\rm dom} (E^2_n)$.
        \item 
There are $k_0,k_1\not\in\bigcup\{[i]_{E^{p_n}} \; |\; i\in u_n\}$ with $x\in[k_0]_{E^{p_n}}$, $y\in[k_1]_{E^{p_n}}$ and $k_0,k_1\not\in{\rm dom} (E^2_n)$.        
    \end{enumerate}
    \item $H^{q_{n+1}}$ is chosen such that:
    \begin{enumerate}
        \item If $l\in\omega\setminus{\rm dom} E^{p_n}$ then $H^{q_{n+1}}(l)=^{**}H^{p_n}(l)$.
        \item If $l\in{\rm dom} (E^{p_n})\setminus A^{q_{n+1}}$, $H^{p_n}(l)=g_i$ then $H^{q_{n+1}}(l)=H^{q_{n+1}}(i)$.
        \item If $l\in{\rm dom} (E^{p_n})\setminus A^{q_{n+1}}$, $H^{p_n}(l)=1-g_i$ then $H^{q_{n+1}}(l)=1-H^{q_{n+1}}(i)$.
        \item If $l\in A^{p_n}\setminus A^{q_{n+1}}$ then $H^{q_{n+1}}(l)=^{**}H^{p_n}({\rm min}[l]_{E^{q_{n+1}}})$.
    \end{enumerate}
\end{enumerate}
Note that for the already defined condition $q_{n+1}$ we have $q_{n+1}\leq_n p_n$. Take $u_{n+1}=u_n\cup\{\min(A^{q_{n+1}}\setminus u_n)\}$. Let $D'_n$ be the set of all $r \leq p$ so that
\begin{enumerate}
\item
$r$ decides the $\phi_1(n)^{\rm th}$-node of $\dot{T}_{\phi_0(n)}$ to be some $\check{s}_n$

\item
There is a $k < \omega$ and a $t \supseteq s_n$ so that $k \in {\rm dom}(t) \setminus {\rm dom}(s_n)$ and $r \forces \check{t} \in \dot{T}_{\phi_0(n)}$ and $g(k) = t(k)$.
\end{enumerate}

As in the previous proofs, the set $D'_n$ is open dense below $p$ (and also below $q_{n+1}$). Then $D'_n\cap D_n$ is dense below~$q_{n+1}$. Therefore we can apply Lemma \ref{densityQI} to obtain $p_{n+1}\leq_{n+1} q_{n+1}$ such that for each $h\in{}^{u_{n+1}}\{0,1\}$, the condition $p^{[h]}_{n+1} \in D'_n\cap D_n\cap M$. In particular, if $h\in2^{u_{n+1}}$ then $p_{n+1}^{[h]}$ decides the $\phi_1(n)^{\rm th}$ node of $\dot{T}_{\phi_0(n)}$ to be some $s_n$, forces that there is a $k < \omega$ and a $t \supseteq s_n$ so that $k \in {\rm dom}(t) \setminus {\rm dom}(s_n)$ and $g(k) = t(k)$ and $p_{n+1}^{[h]}\in D_n\cap M$. It follows that $p_{n+1}\Vdash$``there is a $t$ in $\dot{T}_{\phi_0(n)}$ extending the $\phi_1(n)^{\rm th}$ node and a $k \in {\rm dom}(t)$ above the domain of the $\phi_1(n)^{\rm th}$ node of $\dot{T}_{\phi_0(n)}$ so that $t(k) = g(k)$". Finally, we set

\begin{equation*}
E^1_{n+1}=E^{p_{n+1}}\hook({\rm dom} (E^{p_{n+1}})\setminus\bigcup\{[i]_{E^{p_{n+1}}}\; | \; i\in u_{n+1}\}).
\end{equation*}

We define a fusion $q$ of a sequence $\langle p_n \; | \; n\in\omega\rangle$. Relation~$E^q$ has ${\rm dom} (E^q)=\bigcap\{{\rm dom} (E^{p_n}) \; | \; n\in\omega\}$, and $xE^qy$ if for every~$n$ large enough, $xE^{p_n}y$. Function $H^q$ is equal to $H^{p_n}$ for large enough $n$. {\bf If} $q \in \Q_\mathcal I$ then clearly $q$ is an $(M, \Q_\mathcal I)$-generic condition and $q \leq_n p_n$ for all $n < \omega$ so $q$ forces that $g$ densely diagonalizes every $\dot{T}_n$ as needed. However it's not obvious that $q$ is a condition, specifically it's not immediate that ${\rm dom}(E^q) \in \mathcal I^*$. Indeed this does not necessarily happen but it does if player II wins. Since $\mathsf{GM}_\mathcal I(E)$ is not determined for player I let us choose a play of the game where the first player uses the described strategy above, but still looses. Thus the second player wins. To complete the proof we need to show in this case that ${\rm dom}(E^q) \in \mathcal I^*$ and hence $q$ is a condition. This will follow from the following sequence of claims. To be clear, since player II won the game we have $\bigcup_{n > 0} {\rm dom} (E^2_{n-1}) \setminus {\rm dom} (E^1_n )\in \mathcal I$.

\begin{claim}
We can assume that ${\rm min} ({\rm dom}( E^2_n))>\max u_{n+1}$.
\end{claim}

\begin{proof}
By remark \ref{game-second} if the $u_{n+1}$'s were known at the stage where player II played $E^2_n$ then this would be fine however there is an apparent issue of primacy here: $u_{n+1}$ is chosen after $E^2_n$. However, note that $u_{n+1}$ is read off from $E^{q_{n+1}}$ which is read off from $E^{p_n}$ and $E^2_n$ so player 2 can choose $E^2_n$ so as to affect $u_{n+1}$. 

It remains to show that some choice of $E^2_n$ will result in $\max u_{n+1} < {\rm min} ({\rm dom}(E^2_n))$. This is argued as follows. In words, the first step of player I's strategy at stage $n+1$, when $q_{n+1}$ is built, is to build an equivalence relation on ${\rm dom} ( E^{p_n})$ so that $x$ and $y$ are related just in case either 
\begin{enumerate}
\item
they are $E^2_n$ related (note that ${\rm dom}( E^2_n) \subseteq {\rm dom}( E^1_n) \subseteq (E^{p_n})$), 
\item
they are $E^{p_n}$ related and in one of the first $n$ equivalence classes of $E^{p_n}$ or 
\item
they fall into neither of these categories i.e. everything else is put together. 
\end{enumerate}
In other words, $q_{n+1}$ forgets everything in $E^{p_n}$ outside of $E^2_n$ except for the first $n$ equivalence classes. Note that $E^2_n$ will not contain anything from those first $n$ equivalence classes since they were thrown out of $E^1_n$ whose domain is a super set of $E^2_n$'s domain. Therefore the $n+1^{\rm st}$ equivalence class of $q_{n+1}$ will either be an equivalence class from $E^2_n$ or this ``everything else" class. By thinning out enough player 2 can ensure that it is the ``everything else" class by simply making the minimum of ${\rm dom}( E^2_n)$ greater than the first thing in ${\rm dom}(E^{p_n})$ not in $\bigcup \{[k]_{E^{p_n}} \; | \; k \in u_n\}$ (which must exist since ${\rm dom}(E^{p_n}) \notin \mathcal I$ but $\bigcup \{[k]_{E^{p_n}} \; | \; k \in u_n\}$ is the union of finitely many elements of $\mathcal I$ and hence in $\mathcal I$).
\end{proof}

\begin{claim}
If ${\rm min} ({\rm dom} (E^2_n))>\max u_{n+1}$ then ${\rm dom}( E^{p_n})\setminus {\rm dom}( E^2_n)\subseteq\bigcup\{[k]_{E^{q_{n+1}}} \; | \; k\in u_{n+1}\}$.
\end{claim}

\begin{proof}
By what we said in the last proof, if ${\rm min} ({\rm dom} (E^2_n))>\max u_{n+1}$ then the new equivalence class in $u_{n+1}$ is the ``everything else" class so the only thing that is left in ${\rm dom} (E^{p_n})$ after throwing this out, alongside its first $n$ equivalence classes is whatever was in ${\rm dom} (E^2_n)$.
\end{proof}

\begin{claim}
If ${\rm dom} (E^{p_n})\setminus {\rm dom} (E^2_n)\subseteq\bigcup\{[k]_{E^{q_{n+1}}} \; | \; k\in u_{n+1}\}$, then $\bigcap \{ {\rm dom} (E^{p_n}) \; | \; n < \omega\} \in  \mathcal I^*$. 
\end{claim}

\begin{proof}
Recall that our assumption is that $\bigcup_{n > 0} {\rm dom} (E^2_{n-1} )\setminus {\rm dom}( E^1_n )\in \mathcal I$. It follows that it is enough to show that ${\rm dom} (E_0) \setminus (\bigcup_{n > 0} ({\rm dom} (E^2_{n-1}) \setminus {\rm dom} (E^1_n)))$ is contained in $\bigcap \{ {\rm dom} (E^{p_n}) \; | \; n < \omega\}$ where $E_0$ is the equivalence relation for the condition $p_0$. 

For readability let us label $A:= {\rm dom}( E_0) \setminus (\bigcup_{n > 0} ({\rm dom} (E^2_{n-1}) \setminus {\rm dom} (E^1_n)))$. For all $x \in {\rm dom} (E_0)$ we have $x \in A$ if and only if for each $0 < n < \omega$ (either $x \in {\rm dom}( E^1_n)$ or else $x \notin {\rm dom} (E^2_{n-1})$) (recall that ${\rm dom} (E_n^1) \subseteq {\rm dom} (E^2_{n-1})$).

Note that if $x \notin {\rm dom} (E^2_{n-1})$ for some $n$ then for all $k \geq n$ we have $x \notin {\rm dom} (E^1_k)$ so the above becomes: $x \in A$ if and only if for all $0 < n < \omega$ $x \in {\rm dom} (E^1_n)$ or else for all but finitely many $n$ we have $x \notin {\rm dom} (E^2_{n-1})$ with these two options mutually exclusive.

Fix $x \in A$ we will show that $x \in {\rm dom}( E^{p_n})$ for every $n < \omega$. There are two cases to consider.

\noindent \underline{Case 1:} $x \in {\rm dom}( E^1_n)$ for all $n < \omega$. Note that in this case $x \in {\rm dom}( E^{p_n})$ as needed since for all $n < \omega$ we have ${\rm dom}( E^1_n) \subseteq {\rm dom}( E^{p_n})$ by the way the strategy was defined. 

\noindent \underline{Case 2:} $x \notin {\rm dom} (E^2_{n-1})$ for all but finitely many $n$. We prove by induction on $n$ that $x \in {\rm dom} (E^{p_n})$ for all $n < \omega$. By above, plus the fact that ${\rm dom}( E^2_n) \subseteq {\rm dom} (E^1_n)$, there's no problem with the induction up to the least $k$ with $x \notin {\rm dom}( E^2_{k})$. Thus let us suppose that $x \notin {\rm dom}( E^2_{n+1})$ and show that this implies that $x \in {\rm dom} (E^{p_{n+1}})$ under the inductive assumption that $x \in {\rm dom}( E^{p_n})$. Since $x \in {\rm dom} (E^{p_n})$ we have that $x \in {\rm dom} (E^{q_{n+1}})$ since these two domains are the same. More over, since $x \notin {\rm dom} (E^2_n)$ we have that the $E^{q_{n+1}}$ class that $x$ falls into is one of the $\{[k]_{E^{q_{n+1}}} \; | \; k \in u_{n+1}\}$ since ${\rm dom} (E^{p_n})\setminus {\rm dom} (E^2_n)\subseteq\bigcup\{[k]_{E^{q_{n+1}}} \; | \; k\in u_{n+1}\}$. But now by construction we have $p_{n+1} \leq_{n+1} q_{n+1}$ so the first $n+1$ $E^{q_{n+1}}$ classes are the same as the first $n+1$ $E^{p_{n+1}}$ classes and hence $x \in {\rm dom}(E^{p_{n+1}})$ as needed. This induction completes the proof of the claim.

\end{proof}

 With the proof of these three claims we are done with the proof of the theorem.
\end{proof}

As always we have the same result for families of permutations.
\begin{theorem}
For any maximal ideal $\calI$ and any tight eventually different set of permutations $\calP$ the forcing notion $\Q_\calI$ strongly preserves the tightness of $\calP$.
\end{theorem}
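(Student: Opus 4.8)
The plan is to repeat, essentially verbatim, the proof of the preceding theorem---strong preservation of the tightness of an eventually different family of functions by $\Q_\calI$---replacing ``eventually different family $\calE$'' by ``eventually different set of permutations $\calP$'' throughout, and ``tree'' by ``injective tree'' where appropriate. First I would fix $p \in \Q_\calI$, a countable $M \prec H_\theta$ with $\theta$ sufficiently large and $\calI, \calP, p \in M$, and $g \in \calP$ densely diagonalizing every injective $T \in \calI_T(\calP)^+ \cap M$ (such a $g$ exists since $\calP$ is tight). Then I would enumerate the dense open subsets of $\Q_\calI$ lying in $M$ as $\{D_n \mid n \in \omega\}$ and the $\Q_\calI$-names for members of $\calI_T(\calP)^+$ lying in $M$ as $\{\dot T_n \mid n \in \omega\}$, and fix $\phi \colon \omega \to \omega^2$ with coordinate functions $\phi_0, \phi_1$.

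Next I would run the identical strategy-for-the-first-player argument in the game $\mathsf{GM}_\calI(E)$. Since that game is not determined for a maximal ideal (Claim 1.10(1) of \cite{SS1}), the described strategy cannot be winning, so there is a play of $\mathsf{GM}_\calI(E^p)$ in which the first player follows the strategy but the second player wins; along that play one builds, exactly as in the function case, conditions $q_{n+1} \leq_n p_n$ and $p_{n+1} \leq_{n+1} q_{n+1}$ in $M$, finite sets $u_n$ (initial segments of the relevant $A^{q}$), and $\Q_\calI$-names $\dot t_n$ for a node of $\dot T_{\phi_0(n)}$ extending its $\phi_1(n)^{\rm th}$ node and $\dot k_n$ for an element of $\omega$, using Lemma \ref{densityQI} to arrange that $p_{n+1}^{[h]} \in D'_n \cap D_n \cap M$ for every $h \in 2^{u_{n+1}}$, where $D'_n$ is the set of $r$ deciding the $\phi_1(n)^{\rm th}$ node of $\dot T_{\phi_0(n)}$ to be some $\check s_n$ and forcing a node $\check t \supseteq \check s_n$ of $\dot T_{\phi_0(n)}$ and a $k \in {\rm dom}(t) \setminus {\rm dom}(s_n)$ with $t(k) = g(k)$. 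The fusion $q$ is then $(M, \Q_\calI)$-generic because the second player has won (using Remark \ref{game-second} to thin the $E^2_n$ so that ${\rm dom}(E^q) \in \calI^*$), and $q \leq_n p_n$ for all $n$; hence $q$ forces that $g$ densely diagonalizes every $\dot T_n$, which is what strong preservation requires.

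The only point at which anything new must be said is the density of $D'_n$: its verification uses the outer-hull tree of $\dot T_{\phi_0(n)}$, and we must check that this tree is injective, so that it lies in $\calI_T(\calP)^+ \cap M$ and $g$ may legitimately be applied to it. Concretely, given $r_0 \leq q_{n+1}$, first strengthen it to decide the $\phi_1(n)^{\rm th}$ node of $\dot T_{\phi_0(n)}$ to be some $s_n$, and put $W = \{t \in \omega^{<\omega} \mid t \supsetneq s_n \text{ and } r_0 \nVdash \check t \notin \dot T_{\phi_0(n)}\}$. Each $t \in W$ is injective: some extension of $r_0$ forces $\check t \in \dot T_{\phi_0(n)}$, and $\dot T_{\phi_0(n)}$ is forced to be an injective tree, so $t$ is injective in $V$. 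Hence $W$, together with $s_n$ and its predecessors, is an injective tree; it belongs to $M$; and by the same almost-covering argument as in the function case it is in $\calI_T(\calP)^+$. Therefore $g$ densely diagonalizes $W$, producing the desired $t$ and $k$, so $D'_n$ is dense below $q_{n+1}$.

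I expect no genuine obstacle. The combinatorial core---the fusion driven by the non-determinacy of $\mathsf{GM}_\calI(E)$ together with Lemma \ref{densityQI}---is untouched, and the injectivity bookkeeping is exactly the observation already made for $\mathbb{PT}$, $\P(\mathcal K)$ and $\mathbb{EE}$ in the earlier sections: any tree obtained as an outer hull of one of the forced-injective names $\dot T_n$ automatically inherits injectivity. The remaining details are purely cosmetic rewrites of the function-case proof, so I would simply remark that the argument is identical \emph{mutatis mutandis} and leave them to the reader.
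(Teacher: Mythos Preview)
Your proposal is correct and matches the paper's own treatment exactly: the paper gives no separate proof for the permutation case, simply stating ``As always we have the same result for families of permutations,'' and the one nontrivial adaptation---that the outer-hull tree $W$ is automatically injective because every node forced into a forced-injective tree must itself be injective---is precisely the observation the paper isolates in Section~5 when first passing from functions to permutations.
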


Putting all of this together with the fact that $\Q_\calI$ has the Sacks property and iterating $\Q_\calI$ with countable support $\omega_2$ many times alongside some bookkeeping device to handle the ideals $\calI$ forces $\mathfrak{i} = \mfa < \mathfrak{u}$ we get the following.
\begin{theorem}
In the iterated $\Q_\calI$ model we have $\mfa= \mfa_e = \mfa_p = \mathfrak{i} = cof(\Null) < \mathfrak{u}$.
\end{theorem}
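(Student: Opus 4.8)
The plan is to put together the strong preservation results of this section with the standard analysis of Shelah's iteration, following \cite{SS1} and \cite{IndependentCompact}. I would begin in a ground model $V\models\GCH$ and, using $\CH$, fix once and for all a tight eventually different family $\calE$, a tight eventually different set of permutations $\calP$ (both of size $\aleph_1$, by Theorems \ref{construction} and \ref{MA.tight}), and a tight MAD family $\calA$ of size $\aleph_1$. Then I would form the countable support iteration $\langle\P_\alpha,\dot{\mathbb Q}_\alpha\mid\alpha<\omega_2\rangle$ exactly as in \cite[Section 5]{IndependentCompact}: each $\dot{\mathbb Q}_\alpha$ is forced to equal $\Q_{\dot\calI_\alpha}$ for a maximal ideal $\dot\calI_\alpha$ on $\omega$ in $V[\dot G_\alpha]$, the ideals being enumerated by a bookkeeping device arranged so that the final model satisfies $\mathfrak{i}=\aleph_1<\mathfrak{u}=\aleph_2$. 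Let $G\subseteq\P_{\omega_2}$ be generic over $V$.

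First I would pin down the ``small'' cardinals of $V[G]$. Each iterand $\Q_\calI$ is proper and has the Sacks property (\cite[Claims 1.12 and 1.13]{SS1}), and both properties are preserved by countable support iterations of proper forcing; since $\CH$ makes each iterand of size $\aleph_1$, $\P_{\omega_2}$ is proper and $\aleph_2$-c.c., so it preserves cardinals and $2^{\aleph_0}=\aleph_2$ holds in $V[G]$. The Sacks property of $\P_{\omega_2}$ implies it is $\baire$-bounding, so $\mfd=\aleph_1$; and, since $\CH$ gives only $\aleph_1$ many slaloms in $V$ and every real of $V[G]$ is localized by one of them, the standard characterization of $cof(\Null)$ via slaloms (\cite{BarJu95}) yields $cof(\Null)=\aleph_1$. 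That $\mathfrak{i}=\mfa=\aleph_1$ and $\mathfrak{u}=\aleph_2$ I would quote from \cite{SS1,IndependentCompact}: forcing with $\Q_\calI$ destroys the maximality of $\calI$, equivalently of the ultrafilter $\calI^*$ (\cite[Claim 1.5]{SS1}), and together with the bookkeeping this rules out $\aleph_1$-generated ultrafilters in $V[G]$ by the usual $\aleph_2$-c.c.\ reflection argument.

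The new point is that $\calE$, $\calP$ and $\calA$ survive the iteration. By the theorems just proved, $\Q_\calI$ strongly preserves the tightness of any tight eventually different family and of any tight eventually different set of permutations, for every maximal ideal $\calI$; and by \cite[Section 5]{IndependentCompact}, $\Q_\calI$ strongly preserves tight MAD families. Applying these in each intermediate extension $V[G_\alpha]$, the hypotheses of the iteration theorems \ref{preservation} and \ref{preserve2} (and of \cite[Corollary 32]{restrictedMADfamilies} for the MAD family) are met, so $\P_{\omega_2}$ strongly preserves the tightness of $\calE$, of $\calP$ and of $\calA$. Hence in $V[G]$ these families are still tight; by Propositions \ref{if.tight.then.max} and \ref{if.tight.then.maximal} the first two are maximal, and $\calA$ is MAD. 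Since each has size $\aleph_1$ and each of $\mfa,\mfa_e,\mfa_p$ is uncountable, we obtain $\mfa=\mfa_e=\mfa_p=\aleph_1$, and therefore $\mfa=\mfa_e=\mfa_p=\mathfrak{i}=cof(\Null)=\aleph_1<\aleph_2=\mathfrak{u}$ in $V[G]$.

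The only delicate point I anticipate is purely organizational: one must check that a single iterand $\Q_\calI$ can serve two masters at once --- destroying a prescribed ultrafilter (for the sake of $\mathfrak{u}$) while simultaneously preserving the tightness of $\calE$, $\calP$ and $\calA$ --- so that the bookkeeping required for $\mathfrak{u}$ does not compete with the preservation requirements. This is immediate here, because the strong-preservation theorems of this section impose no constraint on $\calI$ beyond maximality: the bookkeeping is free to enumerate maximal ideals however it likes, and every resulting iterand automatically preserves all three tight witnesses. Everything else is routine assembly of the cited facts together with Theorems \ref{preservation} and \ref{preserve2}.
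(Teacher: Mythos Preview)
Your proposal is correct and takes essentially the same approach as the paper: the paper's argument is a one-sentence assembly of the Sacks property of $\Q_\calI$, the cited results from \cite{SS1} and \cite{IndependentCompact} giving $\mathfrak{i}=\mfa<\mathfrak{u}$, and the strong-preservation theorems of this section together with the iteration Theorems \ref{preservation} and \ref{preserve2}, and you have simply spelled out these ingredients in detail. Your observation that the bookkeeping for $\mathfrak{u}$ places no constraint on the preservation side (since strong preservation holds for every maximal $\calI$) is correct and worth noting explicitly.
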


\section{Definability}
In this section we show that if $V= L$ there is a $\Pi^1_1$ tight eventually different family and a $\Pi^1_1$ tight eventually set of permutations. The construction is extremely similar to Miller's seminal proof that there is a $\Pi^1_1$ MAD family in $L$, \cite[Theorem 8.23]{Millerpi11}. From our construction we are able to conclude that in all of the models considered above if the ground model is $L$ then there is are $\Pi^1_1$ tight witnesses to $\mfa_e = \aleph_1$ and $\mfa_p = \aleph_1$. This strengthens the main result of \cite{SacksMedf} where it is shown that if $V=L$ then there is a $\Pi^1_1$ Sacks indestructible maximal eventually different family. 

\begin{theorem}
If $V=L$ then there is a $\Pi^1_1$ tight eventually different family.
\label{pi11medf}
\end{theorem}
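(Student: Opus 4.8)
The plan is to mimic Miller's construction of a $\Pi^1_1$ MAD family in $L$ (\cite[Theorem 8.23]{Millerpi11}), in the form already adapted to maximal eventually different families in \cite{SacksMedf}, only replacing the ``Sacks-indestructibility'' bookkeeping used there by a ``tightness'' bookkeeping built around the poset $\P_{\calE}$ of Theorem \ref{construction}. Working in $L$, I would fix the canonical wellordering $<_L$ of the reals (which is $\Sigma^1_2$) together with a recursive pairing for bookkeeping, and build a $\subseteq$-increasing sequence $\langle \calE_\alpha : \alpha<\omega_1\rangle$ of countable eventually different families, $\calE_\alpha = \{f_\beta : \beta<\alpha\}$, putting $\calE = \bigcup_{\alpha<\omega_1}\calE_\alpha$. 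The bookkeeping hands me, at stage $\alpha$, a real $r_\alpha$ occurring early in $<_L$, and I let $\vec T_\alpha = \langle T_\alpha(n) : n<\omega\rangle$ be the $\omega$-sequence of subtrees of $\omega^{<\omega}$ that $r_\alpha$ codes (trivial if $r_\alpha$ codes no such object), the assignment being arranged so that every such sequence is handed out cofinally often.

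At stage $\alpha$ I let $\gamma_\alpha$ be least with $L_{\gamma_\alpha}\models\mathsf{ZF}^-+V=L$ and $\langle f_\beta : \beta<\alpha\rangle, r_\alpha \in L_{\gamma_\alpha}$, and $b_\alpha\in 2^\omega$ the $<_L$-least real coding $(L_{\gamma_\alpha},\in)$. I then choose $f_\alpha$ so that (i) $f_\alpha$ is total; (ii) $f_\alpha\neq^* f_\beta$ for every $\beta<\alpha$; (iii) if every $T_\alpha(n)$ lies in $\mathcal I_T(\calE_\alpha)^+$, then $f_\alpha$ densely diagonalizes every $T_\alpha(n)$; and (iv) $f_\alpha$ robustly codes $b_\alpha$. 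Conditions (i)--(iii) are precisely what a descending $\omega$-chain of conditions in the countable poset $\P_{\calE_\alpha}$ delivers, by the density lemma following Theorem \ref{construction} --- there are only countably many relevant dense sets, so no appeal to $\MA$ is needed. Condition (iv) is superimposed on that construction as in \cite{SacksMedf}: for instance, fix once and for all a recursive partition $\omega = \bigsqcup_{i<\omega} I_i$ into consecutive intervals of rapidly increasing length and agree to decode $b_\alpha(i)$ as the majority parity of $f_\alpha\hook I_i$; when building $f_\alpha$ I set, by default, all of its values on $I_i$ to the parity $b_\alpha(i)$ (picking them large enough to respect (ii) and the current finite side condition), and allow myself to perturb only boundedly many coordinates per interval to meet the finitely-much-at-a-time demands of (iii), which leaves the majorities --- hence the decoded bits --- intact. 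Then $\calE_{\alpha+1} = \calE_\alpha\cup\{f_\alpha\}$, with unions at limits; each $\calE_\alpha$ remains countable and eventually different.

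Granting the construction, $\calE$ is as required. It is eventually different: $f_\alpha\neq^* f_\beta$ holds for $\beta<\alpha$ by (ii) and for $\beta>\alpha$ by (ii) at stage $\beta$. It is tight, hence maximal by Proposition \ref{if.tight.then.max}: given $\{T_n : n<\omega\}\subseteq\mathcal I_T(\calE)^+$, let $\alpha$ be a stage where a real coding $\langle T_n : n<\omega\rangle$ is handed out; since $\calE_\alpha\subseteq\calE$ we have $\mathcal I_T(\calE)^+\subseteq\mathcal I_T(\calE_\alpha)^+$, so the hypothesis of (iii) holds at stage $\alpha$ and $f_\alpha\in\calE$ densely diagonalizes every $T_n$. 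Finally $\calE$ is $\Pi^1_1$: the construction is absolute between $L$ and its transitive models of $\mathsf{ZF}^-+V=L$, so for $f\colon\omega\to\omega$ one has $f\in\calE$ iff there is a countable transitive $M\models\mathsf{ZF}^-+V=L$ with $f\in M$ and $M\models$ ``$f$ occurs in the construction'' --- a priori $\Sigma^1_2$. The robust coding upgrades this to $\Pi^1_1$: writing $z_f$ for the real (recursive in $f$) decoded from $f$ via the intervals $I_i$, one checks that $f\in\calE$ iff $z_f$ codes a wellorder and the model $L_\gamma$ it encodes satisfies $\mathsf{ZF}^-+V=L$ and ``$f$ occurs in the construction'' (if $f=f_\alpha$ then $z_f=b_\alpha$ witnesses this; conversely any such $L_\gamma$ already contains a long enough initial segment of the real construction to see $f\in\calE$). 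Since ``$z_f$ codes a wellorder'' is $\Pi^1_1$ in $f$ and the rest is arithmetic in $f$, the whole predicate is $\Pi^1_1$.

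The hard part will be the simultaneous realization of (iii) and (iv). One cannot simply reserve a fixed recursive set of ``coding coordinates'' that the diagonalization never touches: for suitable $T\in\mathcal I_T(\calE_\alpha)^+$ every branch of $T$ agrees with the members of $\calE_\alpha$ off a prescribed recursive set and escapes only on it, so tightness forces $f_\alpha$ to agree with a branch of $T$ precisely on that set. This is exactly the difficulty confronted in \cite{SacksMedf} for Sacks indestructibility, and it is handled the same way here --- each tree-node demands only one new coordinate of agreement, there are only countably many such demands, and they can be scheduled so that each long interval $I_i$ absorbs at most boundedly many of them, so that the majority-parity decoding is unaffected. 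The remaining verifications (that the required dense sets are genuinely met, that $\mathcal I_T(\calE_\alpha)^+$-membership and the construction are absolute to the relevant $L_\gamma$'s, and that the parametrization giving $\Pi^1_1$-ness is correct) follow the template of \cite{Millerpi11, SacksMedf}. The same method --- with injective trees, the poset $\Q_{\calP}$ of Theorem \ref{MA.tight}, and a coding compatible with bijectivity --- gives a $\Pi^1_1$ tight eventually different set of permutations.
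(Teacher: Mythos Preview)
Your proposal is correct and follows the same Miller template as the paper, but the paper's execution differs from yours in one instructive respect: the coding mechanism. Rather than partitioning $\omega$ into long intervals and using majority parity, the paper begins the family with the constant functions $c_n$ (setting $f_n = c_n$ for $n<\omega$) and then codes bit $n$ of the target real $z$ via the parity of $|f \cap c_n|$. Since each $c_n$ is already in $\calE_\alpha$, the new function $f_\alpha$ must be eventually different from $c_n$ anyway, so $f_\alpha \cap c_n$ is finite and its parity can be set with finitely many adjustments; moreover, when meeting a diagonalization demand for some $T \in \mathcal I_T(\calE_\alpha)^+$, one can always choose the witnessing node so that the relevant value avoids the finite set $\{0,\dots,n\}$ (this is exactly because $T$ is positive with respect to the constant functions too). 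Thus the coding and the diagonalization never genuinely conflict, and the ``hard part'' you identify essentially disappears --- there is no need for interval scheduling or a majority argument. This is packaged in the paper as a separate coding lemma.

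The other minor difference is bookkeeping: you hand out tree-sequences cofinally often and argue tightness directly, whereas the paper takes at stage $\alpha$ the $\leq_L$-least sequence in $\mathcal I_T(\calE_\alpha)^+$ and then verifies tightness via a L\"owenheim--Skolem/condensation argument (collapse a countable $M \prec L_{\omega_2}$ containing $\calE$ and the putative least counterexample $\vec T$ to some $L_\delta$, and observe that the construction as run inside $L_\delta$ already diagonalized $\vec T$). Your direct bookkeeping is arguably cleaner for the tightness verification; the paper's choice of ``$\leq_L$-least'' keeps the construction more canonical and dovetails with the coding of $L_\alpha$ (rather than your $L_{\gamma_\alpha}$) into $f_\alpha$. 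Either route yields the $\Pi^1_1$ definition in the same way.
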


As noted above the proof of this theorem is extremely similar to \cite[Theorem 8.23]{Millerpi11}. In our opinion this highlights part of the appeal of tightness for sets of functions: it provides a uniform, relatively simple construction of a $\Pi^1_1$ maximal eventually different family which remains maximal (and $\Pi^1_1$) in an extension of $L$ by a large number of posets.

As in \cite[Lemma 8.24]{Millerpi11} the proof relies on the following coding lemma. For this section for each $n < \omega$ let $c_n \in \baire$ be the function with constant value $n$.

\begin{lemma}
Let $\calE_0$ be a countable, eventually different family which contains each $c_n$. Let $\{T_n \; | \; n < \omega\}$ be a countable collection of trees in $\mathcal I_T(\calE_0)^+$. Let $z \in [\omega]^\omega$ be arbitrary. There is a function $f \in \baire$ which is eventually different from each $g \in \calE_0$, densely diagonalizes $T_n$ for each $n$ and computes $z$. Moreover such an $f$ can be found computably from $\calE_0$, $\{T_n \; | \; n < \omega\}$ and $z$.
\label{codinglemma}
\end{lemma}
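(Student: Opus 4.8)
The plan is to build $f$ by a finite-injury-free recursion in $\omega$ stages, interleaving three kinds of requirements: (a) encoding the bits of $z$ into $f$ so that $z$ is recoverable from $f$ by a fixed Turing procedure, (b) densely diagonalizing each $T_n$, and (c) keeping $f$ eventually different from every $g\in\calE_0$. Since $\calE_0$ is countable, fix enumerations $\calE_0=\{g_i\;|\;i<\omega\}$ and use the given enumeration $\{T_n\;|\;n<\omega\}$; also fix a bijection $\phi:\omega\to\omega\times\omega$ with coordinates $\phi_0,\phi_1$ so that at ``tree-step $n$'' we work to push the $\phi_1(n)^{\rm th}$ node of $T_{\phi_0(n)}$ onto a branch that is infinitely often equal to $f$. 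I would maintain at each stage a finite sequence $s\in\omega^{<\omega}$ (an initial segment of the eventual $f$) together with the finite ``forbidden'' information: for the finitely many $g_i$ already activated, $f$ must avoid their values from some point on. The key is that at each step only finitely many functions from $\calE_0$ have been activated, so $\bigcup\{g_i : i\text{ active}\}$ (together with already-used coordinates) never ``almost covers'' $(T_m)_t$ for the relevant $m$ and $t$, by the hypothesis $T_m\in\mathcal I_T(\calE_0)^+$; hence there is always a legal extension hitting a fresh node of the tree at a coordinate agreeing with $f$ while still dodging all active $g_i$.

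Concretely, the recursion step looks like this. Given the current $s$ and the current finite set of active indices: first, if the current step is an ``activation step'' for some $g_i$, add $i$ to the active set. Second, if it is a ``coding step'' for bit $k$ of $z$, reserve a block of coordinates just above $\dom(s)$ and write a pattern there that records $z(k)$ — say, make $f$ take value $0$ at the next coordinate if $k\notin z$ and value $1$ if $k\in z$, after first padding with values chosen to differ from all active $g_i$ on those coordinates; since on any single coordinate the active set rules out only finitely many values, and since $c_0,c_1\in\calE_0$ are handled by the eventual-difference bookkeeping rather than by these finitely many coding coordinates, this is fine (one uses that only finitely many coding coordinates per requirement and that eventual difference tolerates finite disagreement — so the coding coordinates must be chosen to be large and we should code $z(k)$ by, e.g., the \emph{parity of the least coordinate above some marker at which $f$ is nonzero}, a standard trick that is robust). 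Third, if the current step is the tree-step $n=\phi(\cdot)$, look at $t$, the $\phi_1(n)^{\rm th}$ node of $T_{\phi_0(n)}$ (in the fixed enumeration of nodes of a tree): since $(T_{\phi_0(n)})_t$ is not almost covered by $\bigcup\{g_i:i\text{ active}\}$, there is a node $t'\supsetneq t$ in $T_{\phi_0(n)}$ and a coordinate $l\in\dom(t')\setminus\dom(t)$ with $t'(l)\neq g_i(l)$ for every active $i$; extend $s$ so that $\dom(s')\ni l$ and $s'(l)=t'(l)$, filling the gap coordinates with values avoiding all active $g_i$ (possible coordinatewise as above). Finally, always extend by one more coordinate so that $\bigcup_{\text{stages}} s = f$ is total. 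At the end set $f=\bigcup_{\text{stages}}s$.

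Then I would verify the three properties. Totality is built in. Eventual difference from $g_i$: after the stage at which $i$ is activated, every coordinate we ever define for $f$ is chosen $\neq g_i$ at that coordinate (the coding and gap-filling steps explicitly avoid active values, and the tree steps do too), so $f\neq^* g_i$; note this is why we must \emph{first} activate, then never again touch coordinates below with $g_i$'s value — so the recursion should always extend strictly above $\dom(s)$, which it does. Dense diagonalization of $T_m$: given any $t\in T_m$, it is the $j^{\rm th}$ node for some $j$, and at the (unique) step handling $(\phi_0,\phi_1)=(m,j)$ we placed a node $t'\supsetneq t$ of $T_m$ with $t'(l)=s'(l)=f(l)$ for some $l\in\dom(t')\setminus\dom(t)$, which is exactly the density clause in the definition of ``densely diagonalizes''. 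Computing $z$ from $f$: the coding is via a fixed recursive scheme reading off the markers and the parities, so $z\le_T f$ uniformly. Uniformity/``computably from the data'': the entire recursion uses only $\calE_0$ (as an oracle enumerating the $g_i$ and the $c_n$), the sequence $\langle T_n\rangle$, and $z$, plus searches for legal extensions that always terminate because the relevant sets are in $\mathcal I_T(\calE_0)^+$; so $f$ is computable from $\calE_0\oplus\langle T_n\rangle\oplus z$.

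The main obstacle I anticipate is reconciling the \emph{coding} requirement with \emph{eventual difference} against the constant functions $c_n\in\calE_0$: naive coding (``write $z(k)$ literally at coordinate $k$'') would force $f$ to equal $c_0$ or $c_1$ infinitely often. The fix, which I sketched above, is to code $z$ only through the \emph{positions} of certain controlled events (e.g. the parity of the first nonzero value after a marker block, or the length of a run), never pinning $f(l)$ to a fixed small value at infinitely many $l$; once $c_n$ is activated, all later coordinates are chosen $\ne n$, and only finitely many coding coordinates precede activation. Making this bookkeeping precise — choosing the marker/parity scheme so that it is simultaneously recursive, injective on $z$, and compatible with ``avoid finitely many forbidden values at each coordinate'' — is the one genuinely fiddly part; everything else is the routine $\mathcal I_T(\calE_0)^+$ extension argument already used in the proof of Theorem~\ref{construction} and Lemma~\ref{limitstep}.
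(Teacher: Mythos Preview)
Your overall architecture matches the paper's: build $f$ in stages, at each stage activating one more $g_i$ (and avoiding it thereafter), extending into the current tree to witness dense diagonalization, and recording one more bit of $z$. Your tree-step and eventual-difference arguments are correct as written.

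The one place your proposal is genuinely incomplete is the coding, and you are right that this is the crux. Your concrete suggestions do not work: ``parity of the least coordinate above a marker at which $f$ is nonzero'' carries no information once $c_0$ is activated, since from that point on $f$ is nowhere $0$ and the least such coordinate is always marker${}+1$; ``length of a run'' fails for the same reason. Any position-based scheme also interacts badly with the tree step, since the uncovered coordinates of $(T_m)_t$ need not avoid your reserved positions. The paper's device is value-based and uses the hypothesis $c_n\in\calE_0$ directly: declare $n\in z$ iff $|\{l:f(l)=n\}|=|f\cap c_n|$ is even. At stage $n$ one first extends for the tree requirement, choosing all new values to avoid both $\{g_0(k),\dots,g_n(k)\}$ and the already-frozen values $\{0,\dots,n\}$; then, if the current parity of $|\{l:f(l)=n{+}1\}|$ disagrees with $z$, one adds a single pair $(l,n{+}1)$ at a fresh $l$ where $g_i(l)\neq n{+}1$ for every $i\le n$ (such $l$ exists because each $g_i$ is eventually different from $c_{n+1}\in\calE_0$). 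From stage $n{+}1$ on the value $n{+}1$ is never used again, which simultaneously freezes the parity and secures $f\neq^* c_{n+1}$. This dovetailing of the coding with the constant functions is the missing idea; once you plug it in, the rest of your sketch goes through.

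A minor stylistic difference: the paper works with finite \emph{partial} functions $f_n$ rather than initial segments, adding isolated pairs and never filling gaps. Your initial-segment approach also works, but you must then make explicit that gap-filling values avoid not only the active $g_i$ but also all values $\le n$ whose parity has already been frozen.
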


\begin{proof}
Fix $\calE_0$, $\{T_n\; | \; n < \omega\}$ and $z \in [\omega]^{\omega}$ as in the statement of the lemma. Enumerate $\calE_0 = \{g_k \; | \; k < \omega\}$. Fix a bijection $\phi:\omega \to \omega^2$ with coordinate functions $\phi_0$ and $\phi_1$. We define $f$ in stages. Namely we define a $\subseteq$-increasing sequence $\langle f_i \; | \; i < \omega\}$ so that the following hold. 
\begin{enumerate}
\item
For all $i < \omega$ $f_i$ is a finite partial function from $\omega$ to $\omega$ so that $i \in {\rm dom}(f_i)$.
\item
For all $i < \omega$ there is an $s \in T_{\phi_0(i)}$ extending the $\phi_1(i)^{\rm th}$ node of $T_{\phi_0(i)}$, call it $t_i$ and a $k \in {\rm dom}(s) \setminus {\rm dom}(t_i)$ so that $f_{i+1} (k) =s(k)$ (and in particular is defined).
\item
If $i < j < \omega$ then $f_i \supseteq f_j \cap g_i$. 
\item
For every $n < \omega$ for all $k > n$ $n \notin {\rm range}(f_k)$ and $n \in z$ if and only if $|c_n \cap f_n|$ is even.
\end{enumerate}
Assuming we can define such a sequence, we let $f = \bigcup_{i < \omega} f_i$. This $f$ is as needed since, by 1 it is a function, by 2 it densely diagonalizes every $T_n$, by 3 it is eventually different from every element of $\calE_0$ and by $4$ it computes $z$.

We define $\{f_i \; | \; i < \omega\}$ by recursion as follows. At stage $0$, let $f_0 = \{(0, 0) \}$ if $0 \notin z$ and let $f_0 = \{(0, 0), (1, 0)\}$ if $0 \in z$. Now suppose we have defined $f_n$. Let $k$ be the least number not in the set $\{g_0(n+1), ..., g_n(n+1), 0, ..., n\}$. First, if $n+1 \notin{\rm dom}(f_n)$ let $f_n' = f_n \cup \{(n+1, k)\}$. Now, let $t_{n}$ be the $\phi_1(n)^{\rm st}$ node of $T_{\phi_0(n)}$. Let $s_n$ be least in the lexicographic ordering extending $t_n$ in $T_{\phi_0(n)}$ so that there is a $k \in {\rm dom}(s_n) \setminus {\rm dom}(t_n)$ so that $k \notin {\rm dom}(f_n')$, and $s_n(k) \notin \{g_0(n+1), ..., g_n(n+1), 0, ..., n\}$. That such an $s_n$ and $k$ exists follows by the fact that $T_{\phi_0(n)} \in \mathcal I_T(\calE_0)^+$. Let $f_n '' = f_n ' \cup \{(k, s_n(k))\}$. If $n + 1 \in z$ and $|c_{n+1} \cap f''_n|$ is even or $n + 1 \notin z$ and $|c_{n+1} \cap f''_n|$ is odd then we let $f_{n+1} = f_n ''$. Otherwise, let $l$ be the least element so that $l \notin{\rm dom}(f''_n)$ and $g_i(l) \neq n+1$ for all $i < n+1$. That such an $l$ exists follow from the fact that $\calE_0$ is an eventually different family containing every constant function. Now let $f_{n+1} = f_n'' \cup \{(l, n+1)\}$. This completes the construction and hence the lemma.
\end{proof}

The proof of Theorem \ref{pi11medf} follows along the lines described in \cite[p.194]{Millerpi11} given this coding lemma. We give the details below for completeness.

\begin{proof}[Proof of Theorem \ref{pi11medf}]
Assume $V=L$. We will build a tight eventually different family $\calE = \{f_\alpha \; | \; \alpha < \omega_1\}$ recursively. For each finite $n < \omega$ let $f_n = c_n$. Now suppose we have constructed $\calE_\alpha : = \{f_\xi \; | \; \xi < \alpha\}$  for some $\alpha < \omega_1$. Let $\vec{T}_\alpha$ be the $\leq_L$-least countable sequence of trees in $\mathcal I_T(\calE_\alpha)^+$. We construct $f_{\alpha+1}$ as in the coding lemma with $\calE_\alpha$ our countable eventually different family, $\vec{T}_\alpha$ our sequence of trees and $z$ a real coding an extensional, wellfounded relation $E \subseteq \omega \times \omega$ and $\alpha$ so that $(\omega, E) \cong L_\alpha$. This completes the construction. It just remains to check that it works.

It's not hard to show that the family $\calE$ is tight. To see this, suppose $\calE$ were not tight and let $\vec{T}$ be the $\leq_L$-least countable sequence of trees in $\mathcal I_T(\calE)^+$ which are not densely diagonalized by any element of $\calE$. Let $M \prec L_{\omega_2}$ be a countable elementary submodel containing $\calE$ and $\vec{T}$. Let $L_\delta$ be its transitive collapse. It follows that $L_\delta \models$``$\vec{T}$ is the $\leq_L$-least counter example to tightness of $\calE$" which means that at stage $(\omega_1)^{L_\delta}$ we added a real to $\calE$ densely diagonalizing $\vec{T}$, contradiction. 

What is less clear is that $\calE$ is $\Pi^1_1$. To see this observe that $X \in \calE$ if and only if $X$ codes an extensional, wellfounded relation $E\subseteq \omega \times \omega$ and a ordinal $\alpha$ so that $(\omega, E) \cong L_{\alpha}$ in the way described in the coding lemma and $L_{\alpha+\omega} \models X \in \calE$ where the expression ``$X \in \calE$" is shorthand for the definition of $\calE$ given recursively above (relativized to $L_{\alpha+\omega}$). For more details on this argument the reader is referred to \cite{Millerpi11}.
\end{proof}
Note that in contrast to the case of MAD families this is not the best possible complexity for a maximal eventually different family since $\mathsf{ZF}$ proves that there is a Borel maximal eventually different family (\cite{BorelMedf}) and in fact even a closed one (\cite{CompactnessofMedf}). However, we will apply this result to obtain models where the continuum hypothesis fails and there is still a $\Pi^1_1$ maximal eventually different family of size $\aleph_1$, which will be the best possible complexity for that cardinality. Moreover, in light of Theorem \ref{definable}, this is the best possible complexity for a tight maximal eventually different family (of functions or permutations).

The same argument works in the case of tight eventually different sets of permutations and tight eventually different families. The only point that is different is that in place of the $c_n$'s we need a computable set of disjoint permutations covering $\omega_2$ (in the case of permutations) or an infinite computable partition of $\omega$ into infinite sets. 
\begin{theorem}
If $V=L$ there are a $\Pi^1_1$ tight eventually different set of permutations and a tight MAD family.
\label{pi11other}
\end{theorem}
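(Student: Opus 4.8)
The plan is to run the recursion from the proof of Theorem~\ref{pi11medf} twice more, once for permutations and once for MAD families, feeding in the appropriate analogue of the coding lemma (Lemma~\ref{codinglemma}). For the permutation case, fix at the outset a computable family $\{\pi_n \mid n < \omega\}$ of pairwise eventually different permutations of $\omega$ -- for concreteness one whose graphs partition $\omega \times \omega$, obtained by transporting $k \mapsto k+n$ on $\mathbb Z$ along a computable bijection $\mathbb Z \cong \omega$ -- and let these play the role of the constant functions $c_n$. For the MAD case, fix a computable partition $\{A_n \mid n < \omega\}$ of $\omega$ into infinitely many infinite pieces, and throughout read ``$\mathcal I_T(\,\cdot\,)$'' as the ordinary ideal generated by an almost disjoint family, ``eventually different from'' as ``almost disjoint from'', and ``densely diagonalizes'' as ``has infinite intersection with''. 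The first order of business is then the two coding lemmas: given a countable eventually different set of permutations $\calP_0 \supseteq \{\pi_n\}$ (resp.\ a countable almost disjoint family $\calA_0 \supseteq \{A_n\}$), a countable list of injective trees $T_n \in \mathcal I_T(\calP_0)^+$ (resp.\ of positive sets $B_n \in \mathcal I(\calA_0)^+$) and an arbitrary $z \in [\omega]^\omega$, there is a permutation $g$ of $\omega$ (resp.\ a set $C \in [\omega]^\omega$) eventually different from (resp.\ almost disjoint from) every member of $\calP_0$ (resp.\ of $\calA_0$), densely diagonalizing every $T_n$ (resp.\ meeting every $B_n$ infinitely), and computing $z$; moreover $g$ (resp.\ $C$) is obtained computably from the data.

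Granting these, the rest is verbatim Theorem~\ref{pi11medf}. Work in $L$ and build $\calP = \{f_\alpha \mid \alpha < \omega_1\}$ (resp.\ $\calA = \{A_\alpha \mid \alpha < \omega_1\}$) by recursion, starting from the computable seed; at stage $\alpha$ apply the coding lemma to the family built so far, to the $\leq_L$-least countable sequence of positive trees (resp.\ positive sets) over it, and to a real $z$ coding a wellfounded extensional $E \subseteq \omega \times \omega$ with $(\omega, E) \cong L_\alpha$. Tightness -- and hence maximality, by Proposition~\ref{if.tight.then.maximal} in the permutation case and the analogous fact for MAD families -- follows from condensation: if $\vec T$ were the $\leq_L$-least counterexample, a countable $M \prec L_{\omega_2}$ with $\calP, \vec T \in M$ collapses to some $L_\delta$ which still sees $\vec T$ as the $\leq_L$-least counterexample, so $\vec T$ was handled at stage $(\omega_1)^{L_\delta}$, a contradiction. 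And the family is $\Pi^1_1$ for the very reason given in Theorem~\ref{pi11medf}: a real $X$ belongs to it iff it codes a \emph{wellfounded} (this is the $\Pi^1_1$ clause) extensional $E$ with $(\omega, E) \cong L_\alpha$ for some $\alpha$ and $L_{\alpha+\omega} \models X \in \calP$, where the last expression abbreviates the recursive definition of membership; this last part is arithmetic in a code for the transitive collapse.

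The one step needing real care is the permutation coding lemma, where the two trivial moves of Lemma~\ref{codinglemma} -- installing a fresh value and flipping a parity bit to code $z$ -- must now be made while keeping every approximation injective and forcing the limit to be onto. Injectivity during diagonalization is handled as in the proof of Theorem~\ref{MA.tight}: because each $T_n$ is an injective tree, above any level one finds an extending node whose new coordinate value lies outside the finite range used so far and outside the finitely many currently relevant values of members of $\calP_0$. The parity coding of $z$ proceeds exactly as before, now reading off $z$ from the (finite, by eventual difference) number of agreements between $g$ and $\pi_n$: at stage $n$ one adds at most one fresh agreement $g(\ell) = \pi_n(\ell)$ at a large $\ell$ to fix the parity, and, since $\pi_n$ sits among the active members from stage $n$ on, no later stage accidentally produces a new agreement with $\pi_n$. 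Surjectivity is secured by a back-and-forth bookkeeping that at each stage also places the least not-yet-used value at a fresh large domain point, chosen -- as each permutation meets a given value only once, and respecting eventual difference from the active members of $\calP_0$ -- so as to avoid finitely many forbidden coordinates; the only subtlety is to keep the surjectivity coordinates, the diagonalization coordinates and the coding coordinates on disjoint sparse blocks so that they never compete. For the MAD case there is nothing new: the coding lemma is the routine simultaneous-countably-many-sets strengthening of Miller's \cite[Lemma~8.24]{Millerpi11}, and everything else is the literal translation of Theorem~\ref{pi11medf}.
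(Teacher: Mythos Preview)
Your proposal is correct and follows exactly the approach the paper indicates: the paper's own proof is a one-sentence remark that the argument of Theorem~\ref{pi11medf} goes through once the constant functions $c_n$ are replaced by a computable family of disjoint permutations covering $\omega^2$ (for $\calP$) or by a computable infinite partition of $\omega$ into infinite sets (for $\calA$). You have supplied precisely these ingredients and, in the permutation case, spelled out the bookkeeping for injectivity, surjectivity, and parity coding that the paper leaves implicit; nothing you do departs from the intended route.
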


As a result of Theorems \ref{pi11medf} and \ref{pi11other} we have the following.
\begin{theorem}
The following constellations of cardinal characteristics are compatible with the existence of a $\Pi^1_1$ tight maximal eventually different family of functions, a $\Pi^1_1$ tight eventually different family of permutations and a $\Pi^1_1$ tight MAD family.
\begin{enumerate}
\item
$\mfa= \mfa_e = \mfa_p = \mfd = \mfa_T = \mathfrak{u} < 2^{\aleph_0}$. In this case we can also arrange that the witness to $\mathfrak{u}$ is $\Pi^1_1$.
\item
$\mfa = \mfa_e = \mfa_p = \mathfrak{u} < \mfd = \mfa_T = 2^{\aleph_0}$. In this case we can also arrange that the witness to $\mathfrak{u}$ is $\Pi^1_1$.
\item
$\mfa= \mfa_e = \mfa_p = \mfd < \mfa_T = 2^{\aleph_0}$
\item
$\mfa = \mfa_e = \mfa_p = \mfd = \mathfrak{u} < non(\Null) = cof(\Null) = 2^{\aleph_0}$. In this case we can also arrange that the witness to $\mathfrak{u}$ is $\Pi^1_1$.
\item
$\mfa = \mfa_e = \mfa_p = \mathfrak{i} = cof(\Null) < \mathfrak{u}$. In this case we can also arrange that the witness to $\mathfrak{i} = \aleph_1$.
\end{enumerate}
\end{theorem}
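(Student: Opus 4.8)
The plan is to run, over the ground model $L$, exactly the countable support iterations already analyzed in Sections 6--9 together with the iterated Sacks model, and to observe that over $L$ the tight witnesses may be taken to be the $\Pi^1_1$ objects furnished by Theorems \ref{pi11medf} and \ref{pi11other}. Concretely, working in $L$ I would first fix a $\Pi^1_1$ tight eventually different family $\calE$, a $\Pi^1_1$ tight eventually different set of permutations $\calP$, and a $\Pi^1_1$ tight MAD family $\calA$; for constellation (4) I would additionally fix a $\Pi^1_1$ P-point $\mathcal U$ in $L$ (obtained by the same $\leq_L$-coding scheme used in Theorem \ref{pi11medf}), and for constellation (5) a $\Pi^1_1$, $\Q_\calI$-indestructible maximal independent family, built by an analogous $L$-coding construction using the appropriate ``density/tightness'' property for independent families (cf. \cite{IndependentCompact}). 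Then I would force with the length-$\omega_2$ countable support iteration with the appropriate iterand and bookkeeping: Sacks forcing for (1), Miller forcing $\mathbb{PT}$ for (2), partition forcing $\P(\mathcal K)$ for (3), infinitely often equal forcing $\mathbb{EE}$ for (4), and Shelah's $\Q_\calI$ for (5) --- i.e. precisely the models of Theorems \ref{miller2} and \ref{partitionpreservation2} and of the $\mathbb{EE}$ and $\Q_\calI$ theorems of Sections 8--9, which apply since $L\models\CH$, plus the iterated Sacks model for (1).

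Next I would invoke the preservation machinery. By the theorems of Sections 6--9 (and the remark at the end of Section 4 for Sacks forcing) each iterand strongly preserves the tightness of $\calE$, of $\calP$, and --- by \cite[Corollary 32, Proposition 38]{restrictedMADfamilies} and \cite{IndependentCompact} --- of $\calA$ as well; hence by Theorem \ref{preservation}, Theorem \ref{preserve2}, and the analogous tight-MAD iteration theorem the whole iteration strongly preserves the tightness of all three, so in $V[G]$ they remain tight, and by Propositions \ref{if.tight.then.max} and \ref{if.tight.then.maximal} still maximal. Thus $\mfa=\mfa_e=\mfa_p=\aleph_1$ in $V[G]$, witnessed by $\calA,\calE,\calP$, while $2^{\aleph_0}=\aleph_2$. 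The remaining (in)equalities are exactly those established in Sections 6--9: $\mfd=\aleph_1$ for the Sacks, partition, $\mathbb{EE}$ and $\Q_\calI$ iterations (each $\baire$-bounding) versus $\mfd=\aleph_2$ for Miller; $\mfa_T=\aleph_1$ in the iterated Sacks model (well known, cf. \cite{partitionnumbers}) versus $\mfa_T=\aleph_2$ in the Miller and partition models; $non(\Null)=cof(\Null)=\aleph_2$ in the $\mathbb{EE}$ model, where $\mathbb{EE}$ preserves the P-point $\mathcal U$, so $\mathfrak{u}=\aleph_1$; and $cof(\Null)=\aleph_1$, $\mathfrak{u}=\aleph_2$ in the $\Q_\calI$ model, where the maximal independent family survives, so $\mathfrak{i}=\aleph_1$.

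Finally I would check that each witness stays $\Pi^1_1$. By Lemma \ref{codinglemma} the family $\calE$ (and likewise $\calP$, $\calA$, $\mathcal U$, the independent family) admits a definition of the form: $X\in\calE$ iff $X$ codes a wellfounded extensional relation $E\subseteq\omega\times\omega$ with $(\omega,E)\cong L_\alpha$ for some $\alpha$, built via the coding lemma, and $L_{\alpha+\omega}\models$``$X\in\calE$'' (the recursive definition relativized). Wellfoundedness is $\Pi^1_1$ and absolute between $L$ and $V[G]$, and the second clause is arithmetic in the code, hence also absolute. Since each iterand is proper, $\omega_1^{V[G]}=\omega_1^L=\aleph_1$ and $L_\alpha$ for $\alpha<\omega_1$ is unchanged, so the same $\Pi^1_1$ formula defines the same family in $V[G]$; moreover no new real satisfies it, since such a real would have to be the $\leq_L$-least object with prescribed properties at some countable stage, hence already lie in $L$. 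Thus the witnesses remain $\Pi^1_1$ of cardinality $\aleph_1$, which in view of Theorem \ref{definable} is optimal.

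The main obstacle is not the preservation argument, which is already packaged by Sections 2--9, but the two cases not literally covered there: a $\Pi^1_1$, $\mathbb{EE}$-indestructible P-point in $L$ for constellation (4), and --- more delicately --- a $\Pi^1_1$, $\Q_\calI$-indestructible maximal independent family in $L$ for constellation (5). The latter requires isolating the right density/tightness notion for independent families (an analogue of Definition \ref{def.tight}), showing $\Q_\calI$ strongly preserves it, and running an $L$-coding construction parallel to Theorem \ref{pi11medf}; this is where I would expect the bulk of the remaining work to lie.
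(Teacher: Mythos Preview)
Your approach is essentially identical to the paper's: iterate the appropriate forcing (Sacks, Miller, partition, $\mathbb{EE}$, $\Q_\calI$) over $L$ and invoke the preservation theorems together with Theorems \ref{pi11medf} and \ref{pi11other}. The obstacles you flag for constellations (4) and (5) are already handled in the literature---the paper simply cites \cite{Schilhanultrafilter} for a $\Pi^1_1$ P-point base in $L$ and \cite{DefnMIFs} for the requisite $\Pi^1_1$ maximal independent family---so no additional work is needed there.
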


\begin{proof}
For each case we simply add $\omega_2$ many reals over $L$ with countable support of a given type. For item 1 we use Sacks forcing, for item 2 we use Miller forcing, for item 3 we use partition forcing, for item 4 we use $\mathbb{PT}_{2^n}$ and for item 5 we use $\Q_\calI$. For the second to final model note that there is a $\Pi^1_1$ basis for a P-point in $L$ (\cite{Schilhanultrafilter}). For the final model note that there is a $\Pi^1_1$ maximal independent family in $L$ which is preserved by countable support iterations of the form $\P(\mathcal K)$, see \cite{DefnMIFs}.
\end{proof}

\section{Concluding remarks}

The results of the previous sections show that tightness for eventually different family and an eventually different set of permutations is a robust notion and can provide a lot of inside into studying these types of maximal sets of reals in much the same way as tight MAD families provide insight into questions about $\mfa$.

A natural line of inquiry is whether we can define natural notions of tightness for other relatives of $\mfa$. An obvious question in this respect is the following.

\begin{question}
Is there an analogy of tightness for $\mfa_T$ or for $\mfa_g$?
\end{question}

A positive answer to this problem for $\mfa_g$ would presumably strengthen the main result of \cite{indestmcg} in much the same way that Theorem \ref{pi11medf} strengthens the main result of \cite{SacksMedf}.

\bigskip

\noindent {\em Acknowledgments}. The authors would like to thank J\"{o}rg Brendle and Martin Goldstern for some very helpful comments on earlier drafts.



\end{document}